\documentclass[11pt,reqno]{amsart}
\usepackage[utf8]{inputenc}
\usepackage[T1]{fontenc}
\usepackage{amsmath,amssymb,amsthm,mathtools}
\usepackage{mathrsfs}
\newcommand{\W}{\mathscr{W}}
\usepackage[margin=1.1in]{geometry}
\usepackage[colorlinks=true, linkcolor=cyan, citecolor=cyan]{hyperref}
\usepackage{thmtools}
\usepackage[capitalise,noabbrev]{cleveref}
\crefname{theoremA}{Theorem}{Theorems}

\newtheorem{theorem}{Theorem}[section]
\newtheorem{proposition}[theorem]{Proposition}
\newtheorem{lemma}[theorem]{Lemma}
\newtheorem{corollary}[theorem]{Corollary}
\theoremstyle{definition}
\newtheorem{definition}[theorem]{Definition}

\newtheorem{convention}[theorem]{Convention}

\theoremstyle{remark}
\newtheorem{remark}[theorem]{Remark}

\newcommand{\NN}{\mathbb{N}}
\newcommand{\ZZ}{\mathbb{Z}}
\newcommand{\QQ}{\mathbb{Q}}
\newcommand{\Qp}{\mathbb{Q}_p}
\newcommand{\RR}{\mathbb{R}}
\newcommand{\CC}{\mathbb{C}}
\newcommand{\ct}{\CC(\!(t)\!)}

\newcommand{\into}{\hookrightarrow}
\DeclareMathOperator{\ev}{ev}
\DeclareMathOperator{\Ev}{Ev}

\newcommand{\rt}[2][]{\sqrt[\leftroot{2}\uproot{3}{#1}]{#2}}

\title[Holomorphy rings and the Nullstellensatz over large fields]{Holomorphy rings and the
Nullstellensatz over large fields}

\author{Pablo Cubides Kovacsics}
\address{Departamento de Matem\'aticas, Universidad de los Andes, Bogot\'a, Colombia}
\email{pcubidesk@uniandes.edu.co}

\author{Sebasti\'an Uribe Rinc\'on}
\address{Departamento de Matem\'aticas, Universidad de los Andes, Bogot\'a, Colombia}
\email{sa.uribe@uniandes.edu.co}

\subjclass[2020]{Primary 12E30, 13A15; Secondary 12J20, 13F30, 12D15, 14P05, 03C60}
\keywords{Nullstellensatz, holomorphy ring, large field, valuation ring, real
holomorphy ring, Kochen ring, real and $p$-adic Nullstellensatz}

\begin{document}

\begin{abstract}
For an infinite field $K$ and an ideal $I\subseteq K[X_1,\dots,X_n]$, let $J(Z_K(I))$ be the ideal of
polynomials vanishing on the $K$-rational zero locus of $I$. A Nullstellensatz over $K$ is 
an algebraic description of this closure operator on ideals. Hilbert's classical theorem provides 
such a description via the radical of $I$ over any algebraically closed field. In the present article, 
we introduce a unifying framework for Nullstellensätze over arbitrary rings using the notion of an \emph{$A$-radical}, for $A$ is a subring of $K(X)$ containing $K[X]$.  
We show that a single canonical holomorphy ring gives a Nullstellensatz over \emph{every} infinite field, recovering Hilbert's theorem when $K$ is algebraically closed. 
In addition, we give a new characterization of large fields (in the sense of Pop) via a particular Nullstellensatz: a perfect field is large exactly when the \emph{rational} holomorphy ring $A_n(K)$
---the intersection of all
$K$-rational places of residue field exactly $K$---satisfies the corresponding Nullstellensatz. Finally we 
study the order structure of the class of rings $A$ for which the $A$-radical gives a Nullstellensatz. We show that over non-algebraically closed fields this class has no minimal element, showing there is no optimal Nullstellensatz over such fields. 
\end{abstract}

\maketitle

\section{Introduction}\label{sec:intro}

In these new times where LLM's are shaping most of the mathematical landscape, this is perhaps the authors last attempt to provide a contribution more than 80\% human to mathematics. About two years ago, we started thinking about how to obtain Nullstellensätze over non-algebraically closed fields. Let us formalize this question. Let $K$ be a field, $X=(X_1,\dots,X_n)$ a tuple of indeterminates, $I\subseteq K[X]$ an ideal and $B$ be a subset of $K^n$.
Write
\[
  Z_K(I)=\{a\in K^n: f(a)=0\ \forall f\in I\}\subseteq K^n
\]
for the $K$-rational zero locus of $I$ and 
\[
  J(B)=\{g\in K[X]: g|_{B}=0\}\subseteq K[X]
  \]
for the ideal of polynomials vanishing on $B$. Hilbert's
Nullstellensatz \cite{hilbert1893} solves, for $K$ algebraically closed, the problem of describing the closure
operator $I\mapsto J(Z_K(I))$ algebraically:
\begin{equation}\label{eq:hilbert-intro}
  J(Z_K(I))=\sqrt I=\{g\in K[X]:\exists\,m\geq 1,\ g^m\in I\}.
\end{equation}
Over a field that is not algebraically closed the equality \eqref{eq:hilbert-intro} fails as $J(Z_K(I))$ is typically strictly larger than $\sqrt I$. The guiding
question of this paper is:

\begin{quote}
  \emph{Which algebraic operations on ideals describe the closure operator $I\mapsto J(Z_K(I))$ over an
  arbitrary field $K$?}
\end{quote}

Two classical answers beyond the algebraically closed case are the real Nullstellensatz
of Krivine and Stengle \cite{krivine,stengle} which
identifies $J(Z_\RR(I))$ with the real radical, and the $p$-adic Nullstellensatz of
Jarden--Roquette \cite{jardenroquette} (see also \cite{prestelroquette,srhir}), which describes $J(Z_{\QQ_p}(I))$ through the Kochen operator.
Abstractly, and over an arbitrary base field, the problem was studied by Laksov \cite{laksov1987} (after
Adkins--Gianni--Tognoli \cite{adkins-gianni-tognoli}, Dubois \cite{dubois1970} and Risler
\cite{risler1970}), who expressed $J(Z_K(I))$ as a ``$K$-radical'' (written $\rt[K]{I}$) defined through auxiliary witness
forms.

We present in this paper a unifying framework recovering some of the previous theorems by
identifying $J(Z_K(I))$ with an \emph{$A$-radical}. For a subring $K[X]\subseteq A\subseteq K(X)$ and an ideal $I\subseteq K[X]$, put
\begin{equation}\label{eq:Arad-intro}
  \rt[A]{I}:=\bigl\{g\in K[X]:\exists\,m\geq 1,\ g^m\in IA\bigr\}\;=\;K[X]\cap\sqrt{IA},
\end{equation}
where $IA$ denotes the ideal of $A$ generated by $I$ and $\sqrt{IA}$ its radical in $A$.
Concretely, for $I=(f_1,\dots,f_r)$, a polynomial $g$ lies in $\rt[A]{I}$ if and only if
$g^m=\sum_{j}\omega_j f_j$ for some $m\geq 1$ and $\omega_1,\dots,\omega_r\in A$. This is a radical ideal of $K[X]$ and $\rt[{K[X]}]{I}=\sqrt I$ recovers the classical
radical. A \emph{Nullstellensatz}, in this framework, is the assertion that
$J(Z_K(I))=\rt[A]{I}$ for a given ring $A$ and every ideal $I$; we write
$A\in\W_{K,n}$ when this holds (or simply $A\in\W_{K}$ when the number of variables is clear from the context). We will put special attention to rings $A$ which are \emph{holomorphy rings}, that is, intersections
\[
  A=\bigcap_{\varphi\in\mathcal V}\mathcal O_\varphi
\]
of valuation rings of $K(X)$ indexed by a family $\mathcal V$ of $K$-rational places---i.e.\ places
that are the identity on $K$ and finite on $K[X]$. The denominators allowed in $A$ are exactly the
rational functions that these places send to a residue in $K$; the passage from Hilbert to the real and
$p$-adic theorems is precisely an enlargement of the class of admissible denominators, dictated by the
arithmetic of $K$.

We will focus on two holomorphy rings. The larger is the \emph{evaluation ring} $E_n(K)$,
cut out by the most elementary places of all---the iterated single-variable evaluations
$P_{a,\sigma}\colon f/g\mapsto f(a)/g(a)$ taken in a chosen order $\sigma$ of the variables. The
smaller is the \emph{rational holomorphy ring}
\[
  A_n(K):=\bigcap_{\varphi}\mathcal O_\varphi,\qquad
  \varphi\ \text{ranging over all $K$-rational places of $K(X)$ of residue field \emph{exactly} $K$,}
\]
the intersection of \emph{every} $K$-rational place centred at a point of $K^n$. One always has
$A_n(K)\subseteq E_n(K)$; over $\RR$ the ring $A_n(K)$ is a relative version of
Becker's real holomorphy ring, and over $\Qp$ it is the localized Kochen ring of Jarden--Roquette.

\subsection*{Main results}
Our first theorem is that the \emph{larger} ring $E_n(K)$ computes the Nullstellensatz over an
arbitrary infinite field, with no tameness hypothesis whatsoever, and that it degenerates to $K[X]$
exactly in the classical case---so Hilbert's theorem is the algebraically closed instance.

\begin{restatable}{theoremA}{thmGeneralNSS}\label{thmA}
  For every infinite field $K$ and every $n\geq 1$, the evaluation holomorphy ring $E_n(K)$ lies in
  $\W_{K,n}$: for every ideal $I\subseteq K[X_1,\dots,X_n]$,
  \[
    J(Z_K(I))=\rt[E_n(K)]{I}.
  \]
\end{restatable}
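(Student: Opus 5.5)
The proof splits into the two inclusions. The inclusion $\rt[E_n(K)]{I}\subseteq J(Z_K(I))$ is formal. Suppose $g^m=\sum_j\omega_jf_j$ with $\omega_j\in E_n(K)$, and let $a\in Z_K(I)$. Fix any order $\sigma$ of the variables. The place $P_{a,\sigma}$ is finite on $E_n(K)\subseteq\mathcal O_{P_{a,\sigma}}$ and is a ring homomorphism there. It is the identity on $K$ and sends each polynomial $f$ to $f(a)$. Applying it to the identity gives $g(a)^m=\sum_j P_{a,\sigma}(\omega_j)f_j(a)=0$, so $g(a)=0$. The same argument works for any holomorphy ring cut out by $K$-rational places centred at points of $K^n$.

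For the reverse inclusion I would first reduce to a weak Nullstellensatz by the Rabinowitsch trick. Take $g\in J(Z_K(I))$ and set $I'=IK[X,Y]+(1-Yg)\subseteq K[X,Y]$. Then $Z_K(I')=\emptyset$. Granting the weak form ($Z_K(I')=\emptyset\Rightarrow 1\in I'E_{n+1}(K)$), we get $1=\sum_j\omega_j(X,Y)f_j+\omega(X,Y)(1-Yg)$ with $\omega_j,\omega\in E_{n+1}(K)$. We then substitute $Y=1/g$ and clear denominators by $g^N$. To land in $E_n(K)$ I would check two things:
\begin{itemize}
\item Specialisation $Y\mapsto 1/g$ maps elements of $E_{n+1}(K)$ into $E_n(K)[1/g]$. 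Each place $P_{a,\sigma}$ of $K(X)$ with $g(a)\neq0$ extends, through an iterated evaluation placing $Y$ last, to a place of $K(X,Y)$ evaluating $Y$ at $1/g(a)$.
\item The power of $g$ needed is uniform. This is where places centred at zeros of $g$ enter. On $Z_K(I)$ the ideal already contains a power of $g$ locally, so I would rather run the argument with $g^N\omega_j(X,1/g)$ and control its valuation at every $P_{a,\sigma}$ with $g(a)=0$ directly.
\end{itemize}
If this bookkeeping gets messy, an alternative is to prove the strong form directly by the same induction as below.

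The core is the weak Nullstellensatz: if $Z_K(I)=\emptyset$ then $1\in IE_n(K)$. I would prove it by induction on $n$.

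For $n=1$, the ring $E_1(K)$ is $\{f/h: h \text{ has no root in } K\}$. If $I=(f)$ has no $K$-roots, then $f$ is a unit of $E_1(K)$. The strong one-variable statement also follows: write $f=f_0u$ with $f_0$ a product of $K$-linear factors and $u$ root-free, so that $u^{-1}\in E_1(K)$; then $g^m\in(f_0)$ gives $g^m\in fE_1(K)$.

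For the inductive step I would exploit that the places $P_{a,\sigma}$ are composites: first evaluate $X_{\sigma(1)}$ at $a_1$ in the field $K(X_{\sigma(2)},\dots)(X_{\sigma(1)})$, then continue with the remaining variables. Concretely, I would show that $\mathcal O_{P_{a,\sigma}}$ contains every $f/h$ for which the iterated leading coefficient of $h$ along $\sigma$ at $a$ is nonzero. Then I would cover $K^n$ by finitely many ``charts'': sets where some $h$ built from $I$ (resultants or discriminant-type eliminants in $X_n$ over $K[X_1,\dots,X_{n-1}]$) is a unit of the relevant local rings, and glue via a partition-of-unity identity $1=\sum_k \alpha_kh_k$ with $\alpha_k\in E_n(K)$. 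Infinitude of $K$ would be used to make a linear change of coordinates so that the elimination is in Noether-normal position. The chosen order $\sigma$ varies with the place, which gives the symmetry needed for this change of variables.

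The main obstacle is this gluing step. One must show that the ideal $IE_n(K)$ is not contained in any maximal ideal of $E_n(K)$ when $Z_K(I)=\emptyset$. Equivalently, every maximal ideal of $E_n(K)$ must contract to a ``point'' coming from some $P_{a,\sigma}$, a kind of Kronecker-function-ring/Prüfer property of $E_n(K)$. I would attack it first by proving that for every $h\in K[X]$, the element $1/(1+h^2\cdots)$ of an appropriate shape, or more generally $h/(\text{something with no iterated zero})$, lies in $E_n(K)$. From this, the maximal ideals of $E_n(K)$ are the centres of the places $P_{a,\sigma}$, which yields the weak Nullstellensatz.
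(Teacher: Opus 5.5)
\textbf{Verdict: there is a genuine gap.} Your first inclusion is correct and is exactly the paper's argument (\Cref{lem:Arad-in-JZ}). The reverse inclusion, however, is not established, and most of your effort goes into the step that is actually easy.

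\textbf{The reduction fails as you set it up.} Specialisation $Y\mapsto 1/g$ does \emph{not} map $E_{n+1}(K)$ into $E_n(K)[1/g]$. Take $K=\RR$, $n=1$, $g=X$ and $\omega=(X-1)^2/\bigl((XY-1)^2+(X-1)^4\bigr)$. Every iterated evaluation of $\omega$ is finite: evaluating $X=1$ first gives $0$, evaluating $Y=1$ first gives $1/(1+(X-1)^2)$, and on every other fibre the denominator has no real zeros. So $\omega\in E_2(\RR)$. But $\omega(X,1/X)=1/(X-1)^2$ has a pole at $X=1$, where $g(1)\neq 0$.

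The reason is that ``substitute $Y=1/g$, then apply $P_{a,\sigma}$'' is not one of the iterated evaluation places of $K(X,Y)$. Evaluating $X$ at $a$ and then $Y$ at $1/g(a)$ is a different place, so membership in $E_{n+1}(K)$ gives no control over the composite. A certificate $1\in I'E_{n+1}(K)$ with rational coefficients therefore cannot be pushed through Rabinowitsch's trick; you need a \emph{polynomial} certificate.

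This is what the paper's witness \Cref{lem:witness} supplies. When $K$ is not algebraically closed, take a rootless $\mu\in K[Y]$ of degree $d$. Then $g_2^d\mu(g_1/g_2)\in(g_1,g_2)$ has zero set $Z_K(g_1)\cap Z_K(g_2)$. Iterating this, any ideal without $K$-zeros contains some $h_0$ without $K$-zeros. This also gives your weak Nullstellensatz in one line, since $1/h_0\in\mathcal R_n(K)\subseteq E_n(K)$.

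So the charts/resultants/maximal-ideal programme is unnecessary. As planned it would not work anyway, because $E_n(K)$ is not invariant under linear changes of coordinates. For example, $X_1X_2/(X_1^4+X_2^4)\in E_2(\RR)$. Under $X_1=U-V$, $X_2=U+V$ it becomes $(U^2-V^2)/\bigl(2(U^4+6U^2V^2+V^4)\bigr)$. Evaluating $U\mapsto 0$ gives $-1/(2V^2)$, which is infinite at $V=0$.

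\textbf{The heart of the theorem is left open.} Your ``uniform power of $g$'' bullet is exactly where the work lies. The justification you offer (``on $Z_K(I)$ the ideal already contains a power of $g$ locally'') presupposes the conclusion.

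With the polynomial witness, substituting $X_0=1/g$ and clearing denominators yields $p:=g^Nh_0(X,1/g)\in I$ with $Z_K(p)\subseteq Z_K(g)$. What remains is precisely the domination property: $g^m/p\in E_n(K)$ for some $m$. The paper proves this with $m\geq\deg p$ (\Cref{lem:domination}) by induction on $n$, looking at the first evaluation $X_l\mapsto b$:
\begin{itemize}
\item Because $K$ is infinite, $(X_l-b)\mid p$ forces $(X_l-b)\mid g$.
\item Hence either $X_l-b$ survives in the reduced numerator, giving value $0$ (or a polynomial in the extreme case $p=c(X_l-b)^{\deg p}$).
\item Or $X_l-b$ divides neither, and $P_{b,l}(g^m/p)=g(b,\cdot)^m/p(b,\cdot)$. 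Both $Z_K(p(b,\cdot))\subseteq Z_K(g(b,\cdot))$ and $\deg p(b,\cdot)\leq m$ persist, so the induction applies.
\end{itemize}
Without a statement and proof of this kind, your proposal does not establish $J(Z_K(I))\subseteq\rt[E_n(K)]{I}$.
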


Moreover $E_n(K)=K[X]$ when $K$ is algebraically closed (\Cref{thm:Ev-equals-poly}), so
\eqref{eq:hilbert-intro} is the special case $K=\bar K$ and Hilbert's theorem is recovered.
The proof isolates a single mechanism that we call the
\emph{domination property} $\mathrm{(D)}_n$, and shows that any subring $K[X]\subseteq A\subseteq
E_n(K)$ with $\mathrm{(D)}_n$ already lies in $\W_{K,n}$, and the evaluation ring itself has
$\mathrm{(D)}_n$ unconditionally (\Cref{cor:Ev-has-Dn}). \Cref{thmA} makes Laksov's abstract $K$-radical explicit: for $K$ infinite,
$\rt[K]{I}=\rt[E_n(K)]{I}$, the auxiliary witness forms being replaced by iterated evaluation
places.

The heart of the paper concerns the \emph{smaller} ring $A_n(K)$. Passing from $E_n(K)$ to $A_n(K)$ is the
step that produces a sharper statement, but---unlike \Cref{thmA}---it does not hold
over every field. Our main result shows that the class of fields for which $A_n(K)$ computes the Nullstellensatz is exactly the class of large fields 
in the sense of Pop (every smooth $K$-curve with a
$K$-rational point has infinitely many). This gives a new characterization of large fields in terms of the Nullstellensatz.

\begin{restatable}{theoremA}{thmCharacterization}\label{thmB}
  Let $K$ be an infinite \emph{perfect} field. The following are equivalent:
  \begin{enumerate}
    \item[(i)] $K$ is large;
    \item[(ii)] $A_n(K)\in\W_K$ for every $n\geq 1$, i.e.\ $J(Z_K(I))=\rt[A_n(K)]{I}$ for all $I$ and all
          $n$;
    \item[(iii)] $A_2(K)\in\W_K$.
  \end{enumerate}
\end{restatable}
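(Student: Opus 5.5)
We prove (i)$\Rightarrow$(ii)$\Rightarrow$(iii)$\Rightarrow$(i); the middle implication is trivial.

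\emph{(i)$\Rightarrow$(ii).} Following the proof of \Cref{thmA}, it suffices to show that $A_n(K)$ has the domination property $\mathrm{(D)}_n$, since $K[X]\subseteq A_n(K)\subseteq E_n(K)$ and any such ring with $\mathrm{(D)}_n$ lies in $\W_{K,n}$. The inclusion $\rt[A_n(K)]{I}\subseteq J(Z_K(I))$ is direct. Let $g^m=\sum\omega_jf_j$ with $\omega_j\in A_n(K)$, and let $a\in Z_K(I)$. Take a $K$-rational place $\varphi$ centred at $a$ with residue field $K$, for instance an iterated evaluation at $a$. Then $\omega_j\in\mathcal O_\varphi$, so applying $\varphi$ gives $g(a)^m=\sum\varphi(\omega_j)f_j(a)=0$.

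The real content is the reverse inclusion. Let $g\in J(Z_K(I))$. We must find $m$ with $g^m\in IA_n(K)$. Equivalently, for each $K$-rational place $\varphi$ with residue field $K$, we need $g^m/(\text{suitable combination of the }f_j)$ to be finite at $\varphi$, uniformly in $\varphi$. The plan is a Kochen/real-holomorphy style argument.

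1. Form the element $h=g^{m}/(g^{2m}+\sum_j f_j^{2}\cdot\text{stuff})$, or more generally use the domination witnesses produced by $\mathrm{(D)}_n$ for $E_n(K)$.
2. Show that $h\in\mathcal O_\varphi$ for every such $\varphi$.
3. Suppose instead $\varphi(h)=\infty$ for some place $\varphi$ of residue field $K$. Restrict $\varphi$ to a suitable function field, namely the subfield generated by coordinates over which the relevant data live.
4. Use the fact that a place of $K(X)$ over $K$ with residue field $K$ corresponds to a $K$-rational point of some model, possibly singular. Since $K$ is perfect, we can pass to a smooth point on a curve through the image of the center.
5. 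Largeness then lets us perturb that point to infinitely many, hence Zariski-dense, $K$-points of a curve. This is the standard ``places with residue field $K$ specialize to $K$-points'' property: large fields are existentially closed in $K(\!(t)\!)$.
6. Using $\mathrm{(D)}_n$, these nearby $K$-points land in $Z_K(I)$ while $g\neq0$ on them, contradicting $g\in J(Z_K(I))$.

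The expected main obstacle is this transfer from abstract places to actual $K$-points. We will handle it via the equivalence ``$K$ large $\iff$ $K\preceq_\exists K(\!(t)\!)$''. Every place $\varphi$ with residue field $K$ factors through a rank-one piece whose completion embeds into an iterated Laurent series field over $K$, and $K$ is existentially closed in iterated Laurent series fields by induction. This lets existential statements witnessed at $\varphi$ (namely $f_j=0$ and $g\neq 0$, approximately) be realized in $K$.

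\emph{(iii)$\Rightarrow$(i).} Argue by contrapositive. Suppose $K$ is not large, so there is a smooth curve $C$ with a $K$-point but only finitely many $K$-points. Perfectness and standard reductions let us take $C$ to be a plane affine curve $f(X_1,X_2)=0$, smooth at a point $P\in C(K)$. Let $I=(f)$ and choose $g\in K[X_1,X_2]$ vanishing on the finite set $C(K)$ but not divisible by $f$, so $g\in J(Z_K(I))$. To show $g\notin\rt[A_2(K)]{I}$, use the smooth point $P$. The local ring of $C$ at $P$ is a DVR with residue field $K$, and composing it with a place of the residue line gives a $K$-rational place $\varphi$ of $K(X_1,X_2)$ with residue field exactly $K$. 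This $\varphi$ is centred on the generic point of $C$, so every $\omega\in A_2(K)$ is finite at it. Reducing an identity $g^m=\omega f$ modulo the prime $(f)$, using that $f$ has positive value there, forces $g^m\equiv0$ on $C$, i.e.\ $f\mid g$, which is a contradiction. The care needed here is to make $\varphi$ a genuine rank-two composite place whose first component is the $f$-adic valuation; smoothness of $P$ is exactly what guarantees residue field $K$.
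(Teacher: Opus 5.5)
Your (iii)$\Rightarrow$(i) is correct and is in substance the paper's argument. The composite of the $f$-adic valuation with the discrete valuation ring of the curve at the smooth $K$-point $P$ is exactly the rank-two valuation $w$ that the paper builds from \Cref{lem:technical} with $v(t)=(1,0)$, $v(Y)=(0,1)$. Your observation $A_2(K)\subseteq\mathcal O_\varphi\subseteq\mathcal O_{v_f}$, which gives $\rt[A_2(K)]{(f)}=(f)\neq J(Z_K(f))$, is a clean way to conclude. Two small corrections: it is the coarsening $v_f$, not $\varphi$ itself, that is centred at the generic point of $C$; and perfectness is not needed in this direction. The gap is in (i)$\Rightarrow$(ii), where steps 1--6 do not yet form an argument.

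\emph{Pointwise failure gives no contradiction.} Fixing $m$ and a single $\varphi\in\mathcal S^{\mathrm{rat}}_K$ with $\varphi(g^m/p)=\infty$ contradicts nothing: $v(g)=1$, $v(p)=2$, $m=1$ is fully compatible with $Z_K(p)\subseteq Z_K(g)$. Moreover both $p$ and $g$ vanish at the centre of $\varphi$, so there is no ``approximate'' witness of $p=0\wedge g\neq 0$ there. What contradicts the hypothesis is a place with $v(p)>N\,v(g)$ for \emph{every} $N$. Coarsening such a place by the convex subgroup generated by $v(g)$ yields residues $b$ with $p(b)=0$ and $g(b)\neq0$, in a residue field $L$ that still carries a rational place onto the base field. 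Only then can existential closedness---not $\mathrm{(D)}_n$, which is what you are proving---supply a $K$-point.

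\emph{No uniform $m$.} You need one $m$ with $m\,v(g)\geq v(p)$ for all $v\in\mathcal S^{\mathrm{rat}}_K$ simultaneously, and nothing in your plan provides it. The paper settles both issues at once with an ultraproduct of places $v_r$ satisfying $r\,v_r(g)<v_r(p)$. The cost is that the residue field becomes $K^{\mathcal U}$ rather than $K$, which is why it needs ``perfect and large'' to be elementary, and $K\preceq K^{\mathcal U}$ to return to $K$.

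\emph{The transfer tool is wrong as stated.} A rank-one place of $K(X)$ with residue field $K$ can have value group $\ZZ+\ZZ\sqrt2$, e.g.\ the monomial valuation with $v(X_1)=1$, $v(X_2)=\sqrt2$. Such a completion embeds, as a valued field, into no iterated Laurent series field, because every archimedean subgroup of $\ZZ^r_{\mathrm{lex}}$ is cyclic. The input actually needed is Kuhlmann's theorem: a perfect large field is existentially closed in every extension admitting a rational place onto it. This is exactly where perfectness is used.

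\emph{Step 1 is not usable here.} The element $g^m/(g^{2m}+\sum_j f_j^2\cdots)$ is a device from real algebra. Over fields where $-1$ is a sum of squares (e.g.\ algebraically closed fields, or $\Qp$ with $p\equiv 1\pmod 4$), such denominators can cancel at rational places, so it does not produce elements of $A_n(K)$.
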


That $A_2(K)\in\W_K$ implies $K$ being large needs no perfectness. The implication (i)$\Rightarrow$(ii) is proved 
by an ultraproduct argument. The failure (iii)$\Rightarrow$(i), and hence the necessity of largeness, 
is witnessed concretely over $K=\QQ$
by the rank-zero elliptic curve $E:Y^2=X^3-X$. Its finitely many rational points all lie on the line
$\{Y=0\}$, so with $p=Y^2-X^3+X$ and $g=Y$ one has $Z_\QQ(p)\subseteq Z_\QQ(g)$; yet $g^m/p\notin A_2(\QQ)$
for every $m\geq1$, because the smooth $\QQ$-rational branch of $E$ at the origin carries no rational
point beyond its centre. The argument generalizes to any non-large field: an irreducible plane curve
with a smooth $K$-point but only finitely many $K$-points yields, along such a branch, a rational
function witnessing the failure of $\mathrm{(D)}_2$.

The three tame fields of the classical theory---algebraically closed, real closed and $p$-adically closed---are all large, so \Cref{thmB} offers a new perspective on these theorems. This specialization is not, in general, a reproof of
the classical theorems but rather an alternative, holomorphy-ring description of $J(Z_K(I))$: over an algebraically closed field it does recover Hilbert's Nullstellensatz, over $\Qp$ it coincides
with the Jarden--Roquette identification of $A_n(\Qp)$ with the localized Kochen ring, whereas over $\RR$ it
is a presentation of the real radical genuinely different from---and as far as we can see not yielding---the Krivine--Stengle
Nullstellensatz. 

Finally, we study the class $\W_{K,n}$ itself as a poset (\Cref{sec:structure}). 

\begin{restatable}{theoremA}{thmTrichotomy}\label{thmC}
Let $K$ be an infinite field and $n\geq 1$.
  \begin{enumerate}
    \item[(1)] If $n=1$ or $K$ is algebraically closed, then $\W_{K,n}=\{E_n(K)\}$.
    \item[(2)] If $n\geq 2$ and $K$ is not algebraically closed, then $\W_{K,n}$ has no minimal
      elements---not even among integrally closed subrings---and no maximum.
  \end{enumerate}
\end{restatable}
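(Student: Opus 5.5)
The argument splits along the trichotomy. Two facts are used throughout. First, \emph{soundness}: if $A\in\W_{K,n}$ and $p/q\in A$, then $p\in\rt[A]{(q)}$, so $p$ must vanish on $Z_K(q)$. More generally, $A$ may not contain a fraction whose "denominator" has a $K$-zero where the numerator does not vanish. Second, \emph{completeness}: if $f\in K[X]$ has no zeros in $K^n$, then $1\in J(Z_K(f))$, so $1/f\in A$ for every $A\in\W_{K,n}$.

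\emph{Case $n=1$.} Here $E_1(K)=\bigcap_{a\in K}\mathcal O_{P_a}=S^{-1}K[X]$, where $S$ is the set of polynomials with no root in $K$.
\begin{itemize}
\item By completeness, every $A\in\W_{K,1}$ contains $S^{-1}K[X]$.
\item Conversely, suppose $A$ contains a reduced fraction $p/q$ with $q=(X-a)q'$ and $p(a)\ne0$. Then $p=(X-a)\cdot q'\cdot(p/q)\in (X-a)A$. So $p\in\rt[A]{(X-a)}$, while $p\notin J(\{a\})$, contradicting soundness. Hence $A\subseteq S^{-1}K[X]$.
\end{itemize}
Thus $A=E_1(K)$. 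Since $E_1(K)\in\W_{K,1}$ by \Cref{thmA}, we get $\W_{K,1}=\{E_1(K)\}$.

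\emph{Case $K$ algebraically closed.} By \Cref{thm:Ev-equals-poly}, $E_n(K)=K[X]$, and $K[X]\in\W_{K,n}$ by Hilbert. Let $A\in\W_{K,n}$ and suppose $A$ contains a reduced fraction $p/q$ with $q$ nonconstant. Choose an irreducible factor $q_1$ of $q$; it does not divide $p$. As above, $p\in q_1A$, so $p\in\rt[A]{(q_1)}$. By Hilbert, $J(Z_K(q_1))=(q_1)\not\ni p$, which contradicts soundness. Hence $A=K[X]$.

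\emph{Case (2): $n\ge 2$ and $K$ not algebraically closed.} The main tool is equivariance. Any $K$-automorphism $\tau$ of $K[X]$ (affine changes of coordinates, triangular shears $X_2\mapsto X_2+h(X_1)$) induces a bijection of $K^n$ compatible with $Z_K$ and $J$. Hence $\tau(A)\in\W_{K,n}$ whenever $A\in\W_{K,n}$. The plan for the two remaining claims is as follows.
\begin{itemize}
\item \emph{No maximum.} A maximum $M$ would contain $\tau(E_n(K))$ for every $\tau$. I would show that the ring generated by $E_n(K)$ and a suitable shear $\tau(E_n(K))$ violates soundness. Concretely, fix an irreducible $q\in K[X_1]$ with no root and $\deg q>1$. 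Then look for an element of $E_n(K)$, such as $X_1^{k}/(X_1^{k}+q(X_1)X_2^{2k'})$-type fractions built from the iterated evaluation places in the order $\sigma$, whose $\tau$-translate combines with it to give a fraction $p/r$ with $r(a)=0\ne p(a)$ at some $a\in K^2$. In dimension $\geq 3$ the construction extends by adding dummy variables.
\item \emph{No minimal element.} Given $A\in\W_{K,n}$ (integrally closed if desired), I would construct $A'=A\cap\mathcal O_w\subsetneq A$. Here $w$ is a $K$-rational place centred at a point of $K^n$, chosen as $\tau\circ P_{a,\sigma}$ for an automorphism $\tau$ such that $A\not\subseteq\mathcal O_w$. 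The point is that $A'$ remains integrally closed, being an intersection of two integrally closed rings, and still contains the witnesses required by completeness. To verify the latter, I would show that $A'$ still has the domination property $\mathrm{(D)}_n$ (as $\tau(E_n(K))$ does, by \Cref{cor:Ev-has-Dn} and equivariance), and then invoke the criterion that rings with $\mathrm{(D)}_n$ lie in $\W_{K,n}$.
\end{itemize}

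The main obstacle is this last verification in case (2). Intersecting with an extra valuation ring could destroy witnesses of the form $g^m/f$. I would first try to show directly that for any $w$ of the form $\tau\circ P_{a,\sigma}$, the valuation of such witnesses tends to $\ge0$ as $m\to\infty$ whenever $Z_K(f)\subseteq Z_K(g)$, by reducing to the one-variable computation of the case $n=1$ along the iterated evaluations. A second, more delicate point is finding a $\tau$ with $A\not\subseteq\mathcal O_w$ for every given $A$. I expect to get this from the non-algebraic-closedness of $K$: rootless polynomials such as $q(X_1)$ give elements $1/q$, which every $A$ must contain, and their shears should fail to lie in a suitably chosen $\mathcal O_w$.
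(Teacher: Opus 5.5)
Your part (1) is correct and is essentially the paper's argument (\Cref{lem:numerator-vanishing} together with $\mathcal R_n(K)\subseteq A$).

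\textbf{The no-minimal step fails.} Cutting by $w=\tau\circ P_{a,\sigma}$ cannot work in general. Every such $w$ is a place trivial on $K$, with residue field exactly $K$ and $K[X]\subseteq\mathcal O_w$; that is, $w\in\mathcal S^{\mathrm{rat}}_{K,n}$. Take $K$ large and perfect, e.g.\ $\RR$ or $\Qp$. Then the member $A_n(K)\in\W_{K,n}$ (\Cref{thm:large-implies-Dn}), and every member of $\W_{K,n}$ contained in it, satisfies $A\subseteq\mathcal O_w$ for all such $w$. Hence $A\cap\mathcal O_w=A$.

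Your proposed source of elements outside $\mathcal O_w$ also fails, for a structural reason. A polynomial without $K$-zeros has nonzero residue at the centre of $w$, so $1/q$ and all its translates $1/\tau(q)$ lie in every such $\mathcal O_w$. More to the point, $\mathcal R_n(K)$ lies in every member of $\W_{K,n}$ (\Cref{prop:basic-properties}(3)), so it can never separate $A$ from $A\cap\mathcal O_w$.

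The missing idea is to cut with a valuation that is \emph{not} $K$-rational. The paper proceeds as follows:
\begin{itemize}
\item It takes $\alpha\notin K$ algebraic of degree $d\geq 2$ and lets $q$ be its homogenized minimal polynomial, so $Z_K(q)=\{X_1=X_2=0\}$.
\item Domination gives $\xi=X_1^m/q\in A$.
\item It cuts with the rank-one valuation $w_j=\operatorname{ord}_t\circ\iota_j$ along the arc $X_1\mapsto t$, $X_2\mapsto\alpha t+e_j(t)$, $X_i\mapsto f_i(t)$, with $e_j,f_i$ transcendental (\Cref{def:arcs}).
\item This arc is tangent to the irrational line $X_2=\alpha X_1$ (the residue of $X_2/X_1$ is $\alpha$). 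This gives $w_j(q)\geq j+1$, hence $w_j(\xi)<0$ for $j=m+1$.
\item Because $w_j$ has rank one and $\mathfrak m_{w_j}\cap K[X]=(X_1,\dots,X_n)$, an archimedean argument shows $A\cap\mathcal O_{w_j}\in\W_{K,n}$ (\Cref{prop:cutting}).
\end{itemize}

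\textbf{The no-maximum step is only a plan, and the shears you single out provably fail.} For $\tau\colon X_2\mapsto X_2+h(X_1)$ one checks $P_{a,\mathrm{id}}\circ\tau^{-1}=P_{a',\mathrm{id}}$ with $a'=(a_1,a_2-h(a_1),a_3,\dots,a_n)$. So $E_n(K)$ and $\tau(E_n(K))$ both lie in $\bigcap_{a\in K^n}\mathcal O_{P_{a,\mathrm{id}}}$, which satisfies (a) and (b) of \Cref{thm:abstract-membership}. The ring they generate is therefore in $\W_{K,n}$, and no contradiction arises.

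For an overring $A$ of $E_n(K)$, membership in $\W_{K,n}$ is equivalent to $\mathfrak m_aA\neq A$ for all $a\in K^n$, where $\mathfrak m_a=(X_1-a_1,\dots,X_n-a_n)$. So what you would actually have to prove is that no valuation ring centred at some point $a$ contains both rings.

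The paper avoids automorphisms entirely. Put $\omega_i=X_1^{d-1-i}X_2^i/q$, and let $C_1=E_n(K)[\omega_0,\dots,\omega_{d-2}]$ and $C_2=E_n(K)[\omega_1,\dots,\omega_{d-1}]$. Both lie in $\W_{K,n}$: domination is inherited from $E_n(K)$, and over the origin one evaluates $X_1$ first for $C_1$ and $X_2$ first for $C_2$. On the other hand, $K[\omega_0,\dots,\omega_{d-1}]$ contains $q^{d-1}/q^d=1/q$ with $Z_K(q)\neq\emptyset$. Hence no member of $\W_{K,n}$ contains both $C_1$ and $C_2$.
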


Item (2) is the one we find most surprising: it says that there is no optimal Nullstellensatz over non-algebraically closed fields in more than one variable, even if we restrict to integrally closed rings. 

\subsection*{Relation to previous work}
The problem of computing $J(Z_K(I))$ over an arbitrary field was treated abstractly by Laksov
\cite{laksov1987}: for $k\subseteq K$ he identified $J(Z_K(I))$ with what he called a ``$K$-radical $\rt[K]{I}$''
defined through certain homogeneous witness forms whose $K$-zeros are confined to a hyperplane.
Laksov's $K$-radical is, in his words, ``an abstract set that is not easy to determine.''
\Cref{thmA} gives it a valuation-theoretic incarnation, $\rt[K]{I}=\rt[E_n(K)]{I}$, with those
witness forms replaced by iterated evaluation places. Laksov, following a problem of Tognoli, asked
whether the witness set may be shrunk to the anisotropic forms, and refuted the naive guess over
$\mathbb F_2$ and $\QQ$; Laksov and Westin \cite{laksov-westin1990} recast this as four conjectures,
three later disproved by Ludhani \cite{ludhani2025}. The structural results of
\Cref{sec:structure}---the absence of minimal elements of $\W$ outside the algebraically closed case---can be seen as the holomorphy-ring
counterpart of this circle of questions. 


\subsection*{AI Disclaimer} Although the authors of the present paper did discuss with LLM models, the driving questions, main ideas and core mathematics here presented have been proudly humanly produced. This project started almost 2 years ago, way before LLM's got their current superpowers. Proving a Nullstellensatz for the field of complex Laurent series (see \cite{cubides-uribe}) was the initial motivation leading to this article. Let us now give explicit credit to the parts where LLM's were involved: 
\begin{itemize}
    \item \Cref{thm:Q-fails-domination} was first proved by us. Then Claude Opus 4.8 suggested to the first author that the proof could be extended to every non-large field, that is, \Cref{thm:nonlarge-fails-domination}. The proof was a bit convoluted and the second authors with the help of Deepseek V4 found a better presentation closer to the current one.      
    \item Early proofs of Theorems \ref{thm:no-minimal} and \ref{thm:no-maximum} where also suggested by Claude Opus 4.8. The current proofs have been modified and simplified by us using our own technics.  
    \item Both authors checked the soundness of the present arguments with the cited models. 
\end{itemize}

\subsection*{Organization and conventions}
\Cref{sec:setup} sets up the framework: holomorphy rings and evaluation places, the
$A$-radical, and the witness lemma. \Cref{sec:general} proves
\Cref{thmA} via the domination property and recovers Hilbert. \Cref{sec:characterization}
proves \Cref{thmB}; the classical real, $p$-adic and power-series instances are discussed in
\Cref{rmk:instances}. \Cref{sec:structure} studies the poset $\W_{K,n}$: the trichotomy for its minimal
elements (none over a non-algebraically-closed field for $n\geq 2$, even among integrally closed
rings), the failure of a maximum over $\RR$, and the failure of
the unlocalized Kochen-type presentation of $A_n(\RR)$.

Throughout, as noted before, $K$ is a field and $X=(X_1,\dots,X_n)$. For an ideal $I\subseteq K[X]$ and a subset
$Y\subseteq K^n$, the zero locus and the vanishing ideal are
\[
  Z_K(I)=\{a\in K^n : f(a)=0\ \ \forall f\in I\},\qquad
  J(Y)=\{g\in K[X] : g(a)=0\ \ \forall a\in Y\}.
\]
Recall that the operator $I\mapsto J(Z_K(I))$ is a closure operator on the ideals of $K[X]$ (extensive,
monotone, and idempotent); in that respect, an ideal $I$ is called \emph{closed} if $I=J(Z_K(I))$.

\section{Setup and preliminaries}\label{sec:setup}

A Nullstellensatz, in the terminology of \Cref{sec:intro}, is a description of the closure $J(Z_K(I))$
by means of an algebraic operation on $I$; this section introduces the rings that carry it out.

\subsection{Holomorphy rings and evaluation places}

\begin{definition}\label{def:holomorphy-ring}
  A subring $A\subseteq K(X)$ with $K[X]\subseteq A$ is a \emph{holomorphy ring of $K(X)$ over
  $K[X]$} if it admits a presentation
  \[
    A \;=\; \bigcap_{v\in\mathcal V}\mathcal O_v
  \]
  as an intersection of valuation rings $\mathcal O_v\subseteq K(X)$, with $\mathcal V$ a family
  of valuations of $K(X)$ such that $K[X]\subseteq\mathcal O_v$ for every $v\in\mathcal V$. (Each
  such $v$ is automatically trivial on $K$: since $K\subseteq K[X]\subseteq\mathcal O_v$ and $K$ is
  a field, $v(k)=0$ for every $k\in K^\times$.)
\end{definition}

The term \emph{holomorphy ring} was introduced by Roquette \cite{roquette1973}, by analogy with rings
of functions holomorphic (that is, pole-free) at a prescribed set of places. The simplest holomorphy ring is the ring of those
rational functions whose denominator does not vanish on $K^n$, which we now define:

\begin{definition}\label{def:S} We define the \emph{ring of regular functions on
  $K^n$} as
  \[
    \mathcal R_n(K) \;:=\; \bigl\{f/g\in K(X) : f,g\in K[X],\ Z_K(g)=\emptyset\bigr\}.
  \]
\end{definition}

\begin{remark}
The ring $\mathcal R_n(K)$ is also the localization of $K[X]$ at the multiplicative set of
  polynomials without a $K$-rational zero. For $K=\RR$ this is the ring $\mathcal R(\RR^n)$ of \cite[Def.\ 3.2.1]{bcr}. Note that $\mathcal R_n(K)$ is indeed a holomorphy ring of $K(X)$ over $K[X]$: it is the intersection of the valuation rings $\mathcal O_v$ whose maximal ideal contains some irreducible factor of the reduced denominator $g$ of $f/g\in K(X)$.
\end{remark}

A finer family of holomorphy rings arises from the most elementary places of $K(X)$ over $K[X]$,
those obtained by iterated evaluation in single variables. Following \cite[\S 1]{srhir}, we single
out this class.

\begin{definition}\label{def:eval-places}
  For $b\in K$ and $1\leq l\leq n$, write a rational function $f/g\in K(X)$ in \emph{$l$-coprime
  form}, i.e.\ with $\gcd(f,g)$ trivial when regarded in $K(X_{\neq l})[X_l]$. The
  \emph{evaluation place at $(b,l)$} is the map
  \[
    P_{b,l}\colon K(X)\cup\{\infty\} \longrightarrow K(X_{\neq l})\cup\{\infty\},\qquad
    \frac{f}{g}\;\longmapsto\;\begin{cases}\dfrac{\ev_{b,l}(f)}{\ev_{b,l}(g)} & \text{if }\ev_{b,l}(g)\neq 0,\\[3pt] \infty & \text{otherwise,}\end{cases}
  \]
  where $\ev_{b,l}$ substitutes $X_l\mapsto b$ and $P_{b,l}(\infty)=\infty$. For $a=(a_1,\dots,a_n)\in K^n$ and a permutation
  $\sigma\in S_n$, define the \emph{$\sigma$-iterated evaluation}
  \[
    P_{a,\sigma} \;=\; P_{a_{\sigma(n)},\sigma(n)}\circ\cdots\circ P_{a_{\sigma(1)},\sigma(1)}
    \colon K(X)\cup\{\infty\} \longrightarrow K\cup\{\infty\}.
  \]
  Write $\Ev_{K,n}=\{P_{a,\sigma}:a\in K^n,\sigma\in S_n\}$.
\end{definition}

\begin{definition}[Evaluation holomorphy ring]\label{def:Ev}
  \[
    E_{n}(K) \;:=\; \bigl\{z\in K(X) : P_{a,\sigma}(z)\neq\infty\text{ for every }P_{a,\sigma}\in \Ev_{K,n}\bigr\}
    \;=\;\bigcap_{P\in \Ev_{K,n}}\mathcal{O}_P,
  \]
  the holomorphy ring of $K(X)$ over $K[X]$ associated to the family $\Ev_{K,n}$ of iterated
  evaluation places.
\end{definition}

\begin{remark}\label{rmk:Ev-valuative}
  Each $\mathcal O_{P_{a,\sigma}}:=\{z\in K(X):P_{a,\sigma}(z)\neq\infty\}$ is indeed a valuation
  ring of $K(X)$, so $E_n(K)$ is a holomorphy ring in the sense of
  \Cref{def:holomorphy-ring}: for a single evaluation, writing $f/g$ in $l$-coprime form,
  $P_{b,l}(f/g)\neq\infty$ precisely when $\operatorname{ord}_{X_l-b}(g)=0$, so
  $\mathcal O_{P_{b,l}}$ is the discrete valuation ring $K(X_{\neq l})[X_l]_{(X_l-b)}$ of $K(X)$;
  and a composite of places is a place, its valuation ring being the preimage of that of the second
  place under the residue map of the first \cite[Ch.~VI]{zariski-samuel}. In particular $E_n(K)$ is
  integrally closed in $K(X)$.
\end{remark}

\begin{proposition}\label{prop:S-in-Ev}
For any $K$ and any $n\geq 1$, we have $\mathcal R_n(K)\subseteq E_n(K)$ with equality for
$n=1$.
\end{proposition}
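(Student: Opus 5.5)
The inclusion $\mathcal R_n(K)\subseteq E_n(K)$ will follow from the fact that every $P_{a,\sigma}$ is a ring homomorphism on its valuation ring. Concretely, I will show that every iterated evaluation place $P_{a,\sigma}$ restricts, on $K[X]$, to the ordinary evaluation $g\mapsto g(a)$.

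For a polynomial $g$, the $l$-coprime form is $g/1$, so $P_{b,l}(g)=\ev_{b,l}(g)$ is again a polynomial in the remaining variables. Iterating in the order $\sigma$ then gives $P_{a,\sigma}(g)=g(a)\in K$.

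Now take $z=f/g\in\mathcal R_n(K)$ with $Z_K(g)=\emptyset$. Since $P_{a,\sigma}$ is a place, $P_{a,\sigma}(f)=f(a)\in K$ and $P_{a,\sigma}(g)=g(a)\in K^\times$. The value of a place on a quotient of two finite values with nonzero denominator is the quotient, so $P_{a,\sigma}(z)=f(a)/g(a)\neq\infty$. More intrinsically, $g$ is a unit of $\mathcal O_{P_{a,\sigma}}$ because its residue is nonzero. As this holds for all $a,\sigma$, we get $z\in E_n(K)$.

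The only point needing care is that $P_{a,\sigma}$, as defined via $l$-coprime forms at each stage, really is the place attached to the valuation ring of \Cref{rmk:Ev-valuative}, so that multiplicativity holds. I will take this from that remark: composites of places are places.

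For $n=1$ there is only one permutation, and $P_{a}(f/g)$, with $f/g$ in lowest terms, equals $f(a)/g(a)$ if $g(a)\ne0$ and $\infty$ otherwise. So suppose $z=f/g\in E_1(K)$ in lowest terms. If $g(a)=0$ for some $a\in K$, then $P_a(z)=\infty$, contradicting $z\in E_1(K)$. Hence $Z_K(g)=\emptyset$ and $z\in\mathcal R_1(K)$.

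The only subtlety in this direction is coprimality: because $\gcd(f,g)=1$, we cannot have both $f(a)=0$ and $g(a)=0$, so the definition really assigns $\infty$ exactly at the zeros of $g$. This direction is immediate, and I expect no genuine obstacle anywhere. The main thing to verify is the multiplicativity used in the first part, which is handled by citing \Cref{rmk:Ev-valuative}.
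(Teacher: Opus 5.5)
Your proposal is correct and follows essentially the same route as the paper: the inclusion comes from $P_{a,\sigma}$ being a place that restricts to evaluation at $a$ on $K[X]$, so $g(a)\neq 0$ makes $g$ a unit of $\mathcal O_{P_{a,\sigma}}$. The equality for $n=1$ comes from the fact that a root of the reduced denominator forces $P_{a,1}(f/g)=\infty$. Your version simply spells out the multiplicativity step that the paper compresses into ``a product of nonvanishing residues.''
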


\begin{proof}
  If $g$ has no zeros in $K^n$, then for every $a\in K^n$ and every $\sigma$, the iterated
  evaluation $P_{a,\sigma}(g)\in K$ is nonzero (a product of nonvanishing residues), so
  $P_{a,\sigma}(f/g)\in K$ is finite.

  For $n=1$, let $f/g\in E_1(K)$ be written in lowest terms. If $g$ had a root $a\in K$,
  then $f(a)\neq 0$ by coprimality and $P_{a,1}(f/g)=\infty$, contradicting $f/g\in E_1(K)$.
  Hence $g$ has no root in $K$ and $f/g\in\mathcal R_1(K)$.
\end{proof}

The following computation illustrates that elements of $E_n(K)$ need not have nowhere-vanishing
denominators when $K$ is not algebraically closed; it is the case $d=2$ of
\Cref{thm:S-equals-Ev-iff-acf}.

\begin{proposition}\label{prop:X1-witness}
  Let $K$ be either $\Qp$ with $p\equiv 3\pmod 4$, or $\RR$. Then
  $\omega:=X_1^2/(X_1^2+X_2^2)\in E_{2}(K)$, and consequently
  $X_1\in\rt[E_n(K)]{(X_1^2+X_2^2)}$ via $X_1^2=\omega\cdot(X_1^2+X_2^2)$.
\end{proposition}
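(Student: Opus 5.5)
We must show that $P_{a,\sigma}(\omega)\neq\infty$ for every $a=(a_1,a_2)\in K^2$ and each of the two orderings $\sigma\in S_2$. The key input is that $-1$ is not a square in $K$: this is clear for $\RR$, and for $\Qp$ with $p\equiv 3\pmod 4$ it follows because $-1$ is not a square modulo $p$, hence (by Hensel's lemma, or directly from the residue field) not a square in $\Qp$. Consequently $x^2+y^2=0$ with $x,y\in K$ forces $x=y=0$, and for $c\in K^\times$ the polynomial $X^2+c^2$ has no root in $K$.

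For $\sigma=\mathrm{id}$ we first evaluate $X_1\mapsto a_1$. Note that $X_1^2$ and $X_1^2+X_2^2$ are coprime in $K(X_2)[X_1]$, so the $1$-coprime form is the obvious one. If $a_1\neq 0$, the denominator evaluates to $a_1^2+X_2^2\neq 0$, giving the intermediate value $a_1^2/(a_1^2+X_2^2)\in K(X_2)$. Evaluating next at $X_2=a_2$, the denominator $a_1^2+a_2^2$ is nonzero by the anisotropy observation, so the result is finite. If $a_1=0$, the denominator becomes $X_2^2\neq 0$ and the intermediate value is $0$, which remains finite under the second evaluation.

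For the other order, $\sigma=(12)$, we evaluate $X_2\mapsto a_2$ first. If $a_2\neq 0$, we obtain $X_1^2/(X_1^2+a_2^2)$, and then $a_1^2/(a_1^2+a_2^2)$ is finite. If $a_2=0$, we obtain $X_1^2/X_1^2=1$, which is finite. (Here the $2$-coprime form in $K(X_1)[X_2]$ is again the given one, since the numerator is a unit.) Thus $\omega\in E_2(K)$.

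For the consequence, the identity $X_1^2=\omega(X_1^2+X_2^2)$ shows $X_1\in\rt[E_2(K)]{(X_1^2+X_2^2)}$. For general $n\geq 2$, we regard $\omega$ in $K(X_1,\dots,X_n)$ and check $\omega\in E_n(K)$ in the same way. Along any iterated evaluation, the evaluations of $X_3,\dots,X_n$ leave $\omega$ unchanged, and the evaluations of $X_1,X_2$ behave exactly as above. The only care needed is with coprime forms when evaluating a variable on which $\omega$ does not depend; in that case the place simply fixes $\omega$. There is no real obstacle here; the only substantive point is the anisotropy of $X^2+Y^2$ over $K$.
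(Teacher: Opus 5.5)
Your proof is correct and follows the paper's argument: a direct check that every iterated evaluation place is finite on $\omega$, with anisotropy of $X_1^2+X_2^2$ over $K$ as the only input. (For $\Qp$, the fact that $-1$ is not a square comes from reducing a putative unit square root of $-1$ modulo $p$, which is your residue-field remark, rather than from Hensel's lemma.) Your finer case split according to which coordinate vanishes, and your explicit treatment of the inert variables $X_3,\dots,X_n$, simply spell out details the paper leaves implicit.
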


\begin{proof}
  The only fact about $K$ used is that $X_1^2+X_2^2$ is anisotropic over $K$, i.e.\ does not vanish
  away from the origin (for $K=\Qp$ with $p\equiv 3\pmod 4$, $-1$ is not a square; for $K=\RR$, $-1$
  is not a sum of squares). For $a\in K^2\setminus\{0\}$: $P_{a,\sigma}(\omega)=a_1^2/(a_1^2+a_2^2)\in
  K$. For $a=(0,0)$: $\omega$ is $1$-coprime and $2$-coprime, with
  $\ev_{0,1}(X_1^2+X_2^2)=X_2^2\neq 0$ giving $P_{0,1}(\omega)=0$, and
  $\ev_{0,2}(X_1^2+X_2^2)=X_1^2\neq 0$ giving $P_{0,2}(\omega)=1$. Hence $\omega\in E_{2}(K)$.
\end{proof}

\begin{remark}
  The identity $X_1^2=\omega(X_1^2+X_2^2)$ uses the exponent $m=2$, not $m=1$: indeed
  $X_1/(X_1^2+X_2^2)\notin E_n(K)$, because $P_{0,2}\bigl(X_1/(X_1^2+X_2^2)\bigr)=1/X_1$ has a pole at
  $X_1=0$ on the next evaluation. Raising to $m=2$ provides the extra vanishing required.
\end{remark}

\subsection{The $A$-radical}

For any subring $K[X]\subseteq A\subseteq K(X)$ and any ideal $I=(f_1,\dots,f_r)\subseteq K[X]$, the
$A$-radical $\rt[A]{I}$ is defined by 
\begin{equation}\label{eq:Arad-intro}
  \rt[A]{I}:=\Bigl\{g\in K[X]:\exists\,m\geq 1,\ \omega_1,\dots,\omega_r\in A,\
                  g^m=\textstyle\sum_{j}\omega_j f_j\Bigr\}=\;K[X]\cap\sqrt{IA}.
\end{equation}
It is a radical ideal of $K[X]$. Note that the function $A\mapsto\rt[A]{I}$ is
monotone, that is, $A\subseteq A'\Rightarrow\rt[A]{I}\subseteq\rt[A']{I}$ and $\rt[{K[X]}]{I}=\sqrt{I}$.

\begin{lemma}\label{lem:Arad-in-JZ}
  Let $K[X]\subseteq A\subseteq K(X)$ be a subring and $I\subseteq K[X]$ an ideal. Suppose
  that for every $a\in Z_K(I)$ there is a permutation $\sigma_a$ with
  $A\subseteq\mathcal O_{P_{a,\sigma_a}}$. Then $\rt[A]{I}\subseteq J(Z_K(I))$. In particular,
  this holds for every ideal $I$ when $A\subseteq E_n(K)$.
\end{lemma}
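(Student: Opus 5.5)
The plan is to evaluate a witnessing identity along the place $P_{a,\sigma_a}$. Let $g\in\rt[A]{I}$. By definition there are $m\geq 1$, $f_1,\dots,f_r\in I$ and $\omega_1,\dots,\omega_r\in A$ with
\[
g^m=\textstyle\sum_j\omega_j f_j .
\]
Fix $a\in Z_K(I)$ and write $P=P_{a,\sigma_a}$. We must show $g(a)=0$.

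Since $P$ is a place of $K(X)$, it is a ring homomorphism on its valuation ring $\mathcal O_P$, with values in $K$. By hypothesis $A\subseteq\mathcal O_P$, so each $P(\omega_j)\in K$ is finite. Applying $P$ to the identity gives
\[
P(g)^m=\textstyle\sum_j P(\omega_j)P(f_j).
\]

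Next we compute $P$ on polynomials. For $h\in K[X]$ we have $P(h)=h(a)$. Indeed, a polynomial is $l$-coprime with denominator $1$, so each single evaluation $P_{b,l}$ just substitutes $X_l\mapsto b$. Iterating over the variables in the order $\sigma_a$ therefore substitutes all coordinates of $a$. Hence $P(f_j)=f_j(a)=0$, because $a\in Z_K(I)$. The identity then becomes $g(a)^m=0$, so $g(a)=0$. As $a\in Z_K(I)$ was arbitrary, $g\in J(Z_K(I))$.

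The only step needing care is that $P$ respects sums and products on $\mathcal O_P$, i.e.\ that it is a genuine place and not just a formal map. This is supplied by \Cref{rmk:Ev-valuative}: a composite of places is a place, and the residue map of a valuation ring is a homomorphism. It is also worth checking that no step produces a value $\infty$. The $\omega_j$ lie in $\mathcal O_P$ by hypothesis, and polynomials lie in $\mathcal O_P$ since each successive evaluation stays polynomial.

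For the final sentence of the statement, suppose $A\subseteq E_n(K)=\bigcap_{\sigma,a}\mathcal O_{P_{a,\sigma}}$. Then any choice of $\sigma_a$, for instance the identity permutation, satisfies the hypothesis for every ideal $I$, and the result follows.
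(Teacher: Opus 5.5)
Your proof is correct and follows essentially the same route as the paper: apply the place $P_{a,\sigma_a}$ to the identity $g^m=\sum_j\omega_jf_j$. This place is a ring homomorphism on $\mathcal O_{P_{a,\sigma_a}}\supseteq A$ and restricts to $h\mapsto h(a)$ on $K[X]$, so you get $g(a)^m=0$. Your extra checks (no value $\infty$ arises, and the final sentence follows from $E_n(K)=\bigcap_{a,\sigma}\mathcal O_{P_{a,\sigma}}$) are sound and simply spell out what the paper leaves implicit.
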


\begin{proof}
  Let $g\in\rt[A]{I}$, say $g^m=\sum_i a_ix_i$ with $a_i\in A$ and $x_i\in I$. For
  $a\in Z_K(I)$, the place $P_{a,\sigma_a}$ is a ring homomorphism on $A$ restricting to
  $h\mapsto h(a)$ on $K[X]$ (\Cref{rmk:Ev-valuative}), so
  $g(a)^m=\sum_i P_{a,\sigma_a}(a_i)\,x_i(a)=0$, whence $g(a)=0$. Thus $g\in J(Z_K(I))$.
\end{proof}



\subsection{The witness lemma}

The next lemma is the field-theoretic substitute for ``$Z_K(I)=\emptyset$ implies $1\in I$,'' valid
over arbitrary infinite fields and crucial for the Rabinowitsch trick in \Cref{sec:general}. It is the
weak form of the Nullstellensatz over an arbitrary field; the statement and the homogenization argument
below coincide with a result of V\'elez, G\'omez-Ram\'irez and Gallego \cite{velez-gomez-gallego}.

\begin{lemma}\label{lem:witness}
  Let $K$ be an infinite field and $J=(f_1,\dots,f_r)\subseteq K[X_1,\dots,X_n]$ an ideal with
  $Z_K(J)=\emptyset$. Then there exists $h_0\in J$ such that $Z_K(h_0)=\emptyset$.
\end{lemma}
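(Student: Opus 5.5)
The plan is to reduce the statement to the existence of an \emph{anisotropic form} in $r$ variables, and then substitute the generators $f_1,\dots,f_r$ into it. Call a form $\Phi\in K[Y_1,\dots,Y_r]$ \emph{anisotropic} if it is homogeneous of positive degree and $\Phi(y)=0$ for $y\in K^r$ only when $y=0$. Given such a $\Phi$, I would set
\[
h_0:=\Phi(f_1,\dots,f_r).
\]
Since $\Phi$ is homogeneous of positive degree, it has no constant term, so every monomial of $h_0$ involves some $f_j$ and hence $h_0\in J$. If $h_0(a)=0$ for some $a\in K^n$, anisotropy forces $f_1(a)=\dots=f_r(a)=0$, i.e.\ $a\in Z_K(J)=\emptyset$, which is impossible. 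So $Z_K(h_0)=\emptyset$, and this step is formal.

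The cases then split according to whether $K$ is algebraically closed. If it is, Hilbert's weak Nullstellensatz gives $1\in J$, and I take $h_0=1$. Otherwise there is a finite extension $L=K[t]/(p)$ with $p$ irreducible of degree $d\geq 2$. Let $\theta$ denote the class of $t$. I would use the norm form
\[
N(Y_1,\dots,Y_d)=N_{L/K}\Bigl(\textstyle\sum_i Y_i\theta^{i-1}\Bigr).
\]
It is homogeneous of degree $d$ with coefficients in $K$. It is anisotropic because the norm of a nonzero element of the field $L$ is nonzero; equivalently, $N$ is the determinant of multiplication by $\sum_i y_i\theta^{i-1}$, which is invertible when that element is nonzero.

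The main point is getting an anisotropic form in \emph{arbitrarily many} variables, and I would do this by iterated composition. Define $N_1=N$ and
\[
N_{k+1}(Y^{(1)},\dots,Y^{(d)})=N\bigl(N_k(Y^{(1)}),\dots,N_k(Y^{(d)})\bigr)
\]
in $d^{k+1}$ variables. This is homogeneous of degree $d^{k+1}$. Anisotropy propagates by induction: a zero of $N_{k+1}$ forces each $N_k(y^{(j)})=0$ by anisotropy of $N$, hence each $y^{(j)}=0$ by the inductive hypothesis. Choosing $k$ with $d^k\geq r$ and setting the surplus variables to $0$ yields a form in $r$ variables. It is still anisotropic, since a nonzero $y\in K^r$ padded with zeros is a nonzero vector. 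I should also check that it remains homogeneous of positive degree. It is not the zero polynomial, because it takes a nonzero value at $(1,0,\dots,0)$, and substituting zeros preserves homogeneity.

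The only subtlety I anticipate is the norm form itself, namely verifying that it has coefficients in $K$ and is genuinely homogeneous of degree $d$. Both follow from the determinant description. The argument does not actually use that $K$ is infinite. That hypothesis is harmless here and is presumably needed for the later Rabinowitsch-trick application.
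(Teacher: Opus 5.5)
Your proof is correct, and its core idea is the paper's: substitute the generators into a form with no nontrivial $K$-zeros, so that the result lies in $J$ and vanishes exactly on $Z_K(J)=\emptyset$. Where you differ is in how that form is built.

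The paper takes a non-constant $g\in K[Y]$ of degree $m$ with no root in $K$ and uses the binary anisotropic form $Y_2^m g(Y_1/Y_2)$. When $g$ is irreducible, this is, up to the constant $a_m$, your norm form evaluated at $Y_1-\theta Y_2$. The paper then nests it sequentially, $h_l=h_{h_{l-1},f_{l+1}}$. The nested polynomial is not homogeneous, but that is never needed: each step only has to lie in the ideal generated by its two arguments and vanish exactly on the intersection of their zero sets.

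You instead produce a genuinely homogeneous anisotropic form in $r$ variables, by composing norm forms along a tree and then padding with zeros. This is the classical normic-form construction from the theory of $C_i$ fields. It costs you the determinant description of $N_{L/K}$ to control coefficients and homogeneity. In return you get a single form, independent of the $f_j$, that works uniformly for every ideal with $r$ generators.

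Your remark that the infiniteness of $K$ is never used applies equally to the paper's proof.
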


\begin{proof}
  If $K$ is algebraically closed, the classical weak Nullstellensatz gives $1\in J$; take $h_0=1$.

  Otherwise fix a non-constant $g(Y)=a_0+a_1Y+\cdots+a_mY^m\in K[Y]$ with no root in $K$ (note
  $a_0,a_m\neq 0$). For $g_1,g_2\in K[X]$ define the \emph{homogenization}
  \[
    h_{g_1,g_2} \;:=\; g_2^m\cdot g(g_1/g_2)\;=\;a_0g_2^m + a_1g_2^{m-1}g_1+\cdots+a_mg_1^m\;\in (g_1,g_2).
  \]
  A direct case analysis shows $Z_K(h_{g_1,g_2})=Z_K(g_1)\cap Z_K(g_2)$:
  if $g_2(a)\neq 0$ then $h_{g_1,g_2}(a)=g_2(a)^m\,g(g_1(a)/g_2(a))\neq 0$ since $g$ has no root in
  $K$; if $g_2(a)=0$ and $g_1(a)\neq 0$ only $a_mg_1(a)^m\neq 0$ survives; and both vanishing trivially
  give $h_{g_1,g_2}(a)=0$.

  If $r=1$, take $h_0=f_1$. Otherwise iterate: $h_1=h_{f_1,f_2}$, $h_l=h_{h_{l-1},f_{l+1}}$ for
  $2\leq l\leq r-1$. By induction, $Z_K(h_l)=Z_K(f_1)\cap\cdots\cap Z_K(f_{l+1})$, hence
  $h_0:=h_{r-1}\in J$ satisfies $Z_K(h_0)=\emptyset$.
\end{proof}

\section{The general Nullstellensatz: $J(Z_K(I))=\rt[E_n(K)]{I}$}\label{sec:general}

This section proves \Cref{thmA}. We isolate a single mechanism, the \emph{domination property}, and show that it, 
together with containment in $E_n(K)$, is all a ring $A$ needs in order to compute the closure $J\circ Z_K$ as an $A$-radical.

\subsection{The domination property}

\begin{definition}\label{def:domination-property}
  Let $K$ be an infinite field and $K[X]\subseteq A\subseteq K(X)$ a subring. We say $A$ has the
  \emph{domination property} $\mathrm{(D)}_n$ if for all $p,g\in K[X]$ with
  $Z_K(p)\subseteq Z_K(g)$ there exists an integer $m\geq 1$ with $g^m/p\in A$.
\end{definition}

The evaluation holomorphy ring has this property.

\begin{lemma}\label{lem:domination}
  Let $K$ be an infinite field and $p,g\in K[X_1,\dots,X_n]$ with
  $Z_K(p)\subseteq Z_K(g)$. Then for every integer $m\geq\max(1,\deg p)$,
  $g^m/p\in E_{n}(K)$.
\end{lemma}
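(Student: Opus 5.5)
The plan is to fix $a\in K^n$ and $\sigma\in S_n$ and show $P_{a,\sigma}(g^m/p)\neq\infty$ by induction on $n$, peeling off one evaluation at a time. I assume $p\neq 0$ so that $g^m/p$ makes sense; if $g=0$ there is nothing to prove. The base case is $n=0$: then $p$ is a nonzero constant and the claim is trivial. The case $n=1$ also follows directly from the argument below.

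For the inductive step, put $l=\sigma(1)$ and $b=a_l$, and factor
\[
p=(X_l-b)^e p',\qquad g=(X_l-b)^f g',
\]
where $\ev_{b,l}(p')\neq 0$ and $\ev_{b,l}(g')\neq 0$. The key geometric input is the following. If $e\geq 1$, then the hyperplane $\{X_l=b\}$ lies in $Z_K(p)\subseteq Z_K(g)$. Since $K$ is infinite, a polynomial vanishing on all $K$-points of this hyperplane is divisible by $X_l-b$ (write $g$ as a polynomial in $X_l-b$ with coefficients in $K[X_{\neq l}]$; the constant coefficient vanishes on $K^{n-1}$, hence is $0$). So $e\geq1$ forces $f\geq1$.

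I then split into three cases according to the first coordinate $mf-e$ of the composite (lexicographic) valuation of $g^m/p$.

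\emph{Case $e\geq1$ and $mf>e$.} Here $P_{b,l}(g^m/p)=0$, and every later evaluation keeps it at $0$, so the value is finite.

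\emph{Case $e\geq 1$ and $mf=e$.} Since $f\geq 1$ and $m\geq\deg p\geq e$, we must have $f=1$ and $e=m=\deg p$. Hence $p'$ is a nonzero constant $c$, and
\[
P_{b,l}(g^m/p)=\ev_{b,l}(g')^m/c
\]
is a polynomial in $K[X_{\neq l}]$. All subsequent evaluations of it are then finite.

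\emph{Case $e=0$.} Here $P_{b,l}(g^m/p)=\ev_{b,l}(g)^m/\ev_{b,l}(p)$, and the denominator $\tilde p=\ev_{b,l}(p)$ is a nonzero polynomial in the $n-1$ variables $X_{\neq l}$ with $\deg\tilde p\leq\deg p\leq m$. Setting $\tilde g=\ev_{b,l}(g)$, the inclusion $Z_K(\tilde p)\subseteq Z_K(\tilde g)$ in $K^{n-1}$ holds because it is the slice $X_l=b$ of $Z_K(p)\subseteq Z_K(g)$. The remaining places $P_{a_{\sigma(n)},\sigma(n)}\circ\cdots\circ P_{a_{\sigma(2)},\sigma(2)}$ form an iterated evaluation place on $K(X_{\neq l})$. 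The inductive hypothesis, applied to $\tilde p,\tilde g$ with the same $m\geq\max(1,\deg\tilde p)$, gives finiteness. The case $\tilde g=0$ is trivial.

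The main obstacle is the boundary case $mf=e$, where the first valuation coordinate of $g^m/p$ is exactly $0$ and positivity gives nothing. This is precisely where the hypothesis $m\geq\deg p$ is used, forcing $p$ to be a pure power of $X_l-b$ up to a constant. The divisibility step ($e\geq1\Rightarrow f\geq1$) is where infiniteness of $K$ enters. Everything else is bookkeeping that the composite place behaves as described in \Cref{rmk:Ev-valuative}.
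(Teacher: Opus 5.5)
Your proposal is correct and follows essentially the same route as the paper's proof: induction on $n$ by peeling off the first evaluation $P_{b,l}$, the infinite-field argument that $(X_l-b)\mid p$ forces $(X_l-b)\mid g$, and the same case analysis. In that analysis the boundary case $mf=e$ forces $p=c(X_l-b)^{\deg p}$, while the case $e=0$ passes $Z_K(\tilde p)\subseteq Z_K(\tilde g)$ and $\deg\tilde p\leq m$ down to the slice. The only thing you leave implicit is that the case $e\geq 1$, $mf<e$ cannot occur, which is immediate from $f\geq 1$ and $m\geq\deg p\geq e$.
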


\begin{proof}
  We may assume $p\neq 0$ and $m\in\ZZ_{>0}$; the case $g=0$ is trivial.
  By induction on $n$. Write $g^m/p=F/G$ in reduced form: $F=g^m/d$, $G=p/d$, $d=\gcd(g^m,p)$ in
  $K[X]$. Since $\gcd(F,G)=1$ in this UFD, $F/G$ is $l$-coprime for every $l$: if $F$ and
  $G$ had a nonconstant common factor in $K(X_{\neq l})[X_l]$, Gauss' lemma applied to
  $K[X_{\neq l}][X_l]$ would yield a nonconstant common factor in $K[X]$, contradicting
  $\gcd(F,G)=1$.

  \medskip\noindent\emph{Base case $n=1$.}
  Here $E_n(K)=\mathcal R_n(K)=\{h/q\in K(X_1):q\text{ has no root in }K\}$. Let $r_1,\dots,r_k\in K$ be the roots
  of $p$ in $K$ with multiplicities $e_1,\dots,e_k$. By hypothesis each $r_i$ is a root of $g$
  with multiplicity $d_i\geq 1$. The multiplicity of $r_i$ in the reduced denominator $G$ is
  $\max(e_i-md_i,0)$, and $m\geq\deg p\geq e_i$ together with $d_i\geq 1$ gives $md_i\geq m\geq e_i$,
  so $G$ has no roots in $K$ and $F/G\in \mathcal R_n(K)=E_n(K)$.

  \medskip\noindent\emph{Inductive step $n\geq 2$.}
  Fix $P_{a,\sigma}\in \Ev_{K,n}$ and let $l=\sigma(1)$, $b=a_{\sigma(1)}$. Let
  $\alpha=\mathrm{mult}_{X_l-b}(g)$ and $\beta=\mathrm{mult}_{X_l-b}(p)$ in $K[X]$.

  \emph{Claim: $\beta>0\Rightarrow\alpha>0$.} If $(X_l-b)\mid p$ then $p(b,X_{\neq l})\equiv 0$,
  so $\{b\}\times K^{n-1}\subseteq Z_K(p)\subseteq Z_K(g)$; since $K$ is infinite this
  forces $g(b,X_{\neq l})\equiv 0$, i.e.\ $(X_l-b)\mid g$.

  Hence $\mathrm{mult}_{X_l-b}(G)=\max(\beta-m\alpha,0)$ and
  $\mathrm{mult}_{X_l-b}(F)=\max(m\alpha-\beta,0)$.

  \emph{Case 1: $\alpha\geq 1$.} Then $m\alpha\geq m\geq\deg p\geq\beta$, so
  $\mathrm{mult}_{X_l-b}(G)=0$ and $\mathrm{mult}_{X_l-b}(F)\geq 0$. If $m\alpha>\beta$, then
  $(X_l-b)\mid F$, and $G(b,X_{\neq l})\not\equiv 0$ gives $P_{b,l}(F/G)=0$, finite. If
  $m\alpha=\beta$, then $m=\deg p$, $\alpha=1$, $\beta=\deg p$, $p=c(X_l-b)^{\deg p}$ for some
  $c\in K^\times$, and direct calculation gives $P_{b,l}(F/G)=g_0(b,X_{\neq l})^m/c\in K[X_{\neq l}]$
  where $g=(X_l-b)g_0$. All subsequent evaluations of this polynomial are finite.

  \emph{Case 2: $\alpha=0$ (hence $\beta=0$ by the claim).} Then $(X_l-b)$ divides neither $g$ nor
  $p$, so $d(b,X_{\neq l})\not\equiv 0$ and
  $P_{b,l}(F/G)=g(b,X_{\neq l})^m/p(b,X_{\neq l})$.
  The hypothesis is preserved: $a'\in K^{n-1}$, $p(b,a')=0$ $\Rightarrow$ $(b,a')\in Z_K(p)\subseteq Z_K(g)$
  $\Rightarrow$ $g(b,a')=0$. The degree bound is preserved: $\deg p(b,X_{\neq l})\leq\deg p\leq m$.
  By induction the remaining $n-1$ evaluations give a finite value.

  In all cases $P_{a,\sigma}(g^m/p)\neq\infty$, so $g^m/p\in E_n(K)$.
\end{proof}

\begin{remark}\label{rmk:bound-sharp}
  The bound $m\geq\deg g$ is sharp: for $f=X_1$, $g=(X_1^2+X_2^2)^2$ (with $\deg g=4$), one has
  $P_{(0,0),(2,1)}(X_1^4/g)=1$ (finite) but $P_{(0,0),(2,1)}(X_1^3/g)=\infty$.
\end{remark}

\begin{corollary}\label{cor:Ev-has-Dn}
  For every infinite field $K$ and every $n\geq 1$, $E_n(K)$ has the domination property
  $\mathrm{(D)}_n$.
\end{corollary}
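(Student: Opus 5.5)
This follows at once from \Cref{lem:domination}. Given $p,g\in K[X]$ with $Z_K(p)\subseteq Z_K(g)$, we need one exponent $m\geq 1$ with $g^m/p\in E_n(K)$. Take $m:=\max(1,\deg p)$. This is a positive integer, and the lemma applies to every $m\geq\max(1,\deg p)$, so it gives $g^m/p\in E_n(K)$ for this choice. That is exactly the condition in \Cref{def:domination-property} for $A=E_n(K)$.

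Before invoking the lemma I will check two side conditions of \Cref{def:domination-property}. First, $E_n(K)$ must lie between $K[X]$ and $K(X)$. This holds because every iterated evaluation place is finite on polynomials, so $K[X]\subseteq\mathcal O_{P}$ for every $P\in\Ev_{K,n}$. Second, I will look at the degenerate case $p=0$, where $Z_K(p)=K^n$ and $g^m/p$ is not defined.

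Here the hypothesis $Z_K(0)=K^n\subseteq Z_K(g)$ forces $g$ to vanish on all of $K^n$. Since $K$ is infinite, this gives $g=0$. So the case $p=0$ reduces to $g=p=0$, and I will read the quotient under the convention already used in the proof of \Cref{lem:domination}, which begins ``we may assume $p\neq 0$.'' In the intended reading of (D)$_n$, $p$ is nonzero. No other obstacle arises: all the work is contained in \Cref{lem:domination}.
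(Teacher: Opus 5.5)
Your proposal is correct and matches the paper's proof, which simply declares the corollary immediate from \Cref{lem:domination} by taking any $m\geq\max(1,\deg p)$. Your extra checks are accurate housekeeping that the paper leaves implicit: $K[X]\subseteq E_n(K)$, and the degenerate case $p=0$, which forces $g=0$ and shows that $\mathrm{(D)}_n$ is tacitly a statement about $p\neq 0$.
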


\begin{proof}
  Immediate from \Cref{lem:domination}.
\end{proof}

\subsection{The class \texorpdfstring{$\W$}{W} and the general Nullstellensatz}

\begin{definition}\label{def:W}
  For $n\geq 1$, let $\W_{K,n}$ denote the class of subrings $A\subseteq K(X)$, $X=(X_1,\dots,X_n)$,
  such that (i) $K[X]\subseteq A$; and (ii) $\rt[A]{I}=J(Z_K(I))$ for every ideal $I\subseteq K[X]$.
  We set
  \[
    \W_K:=\bigcup_{n\geq 1}\W_{K,n}.
  \]
  When $K$ is clear from context we even abbreviate $\W_K$ by $\W$.
\end{definition}

\begin{theorem}\label{thm:abstract-membership}
Let $K$ be an infinite field and $K[X]\subseteq A\subseteq K(X)$ a subring such that
  \begin{enumerate}
    \item[(a)] $A$ has the domination property $\mathrm{(D)}_n$ and
    \item[(b)] for every $a\in K^n$ there is a permutation $\sigma_a$ with
      $A\subseteq\mathcal O_{P_{a,\sigma_a}}$.
  \end{enumerate}
  Then $A\in \W_{K,n}$.
\end{theorem}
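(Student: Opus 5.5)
The plan is to prove the two inclusions $\rt[A]{I}\subseteq J(Z_K(I))$ and $J(Z_K(I))\subseteq\rt[A]{I}$ separately, for an arbitrary ideal $I=(f_1,\dots,f_r)\subseteq K[X]$.

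The first inclusion is immediate. Hypothesis (b) is exactly the assumption of \Cref{lem:Arad-in-JZ}, and it holds for every $a\in K^n$, in particular for every $a\in Z_K(I)$. Hence $\rt[A]{I}\subseteq J(Z_K(I))$.

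For the reverse inclusion, fix $g\in J(Z_K(I))$. The aim is to produce a single polynomial $p\in I$ with $Z_K(p)=Z_K(I)$, and then apply the domination property. I would use the homogenization trick from the proof of \Cref{lem:witness}. That proof shows, for $K$ not algebraically closed, how to build iteratively from $f_1,\dots,f_r$ a polynomial $h\in I$ with $Z_K(h)=Z_K(f_1)\cap\cdots\cap Z_K(f_r)=Z_K(I)$. The hypothesis $Z_K(J)=\emptyset$ was used there only at the last step, so the construction itself yields such an $h$ for any $I$.

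If $K$ is algebraically closed, this construction is unavailable because there is no root-free $g(Y)$. In that case I would pass through a Rabinowitsch argument instead. Consider the ideal $(f_1,\dots,f_r,1-Yg)\subseteq K[X,Y]$; it has no $K$-zeros, so $1$ lies in it by the weak Nullstellensatz. Substituting $Y=1/g$ and clearing denominators gives $g^m\in I\subseteq IA$ directly. Alternatively, one can simply quote Hilbert's theorem, since then $J(Z_K(I))=\sqrt I\subseteq\rt[A]{I}$. So the interesting case is when $K$ is not algebraically closed, and there $p:=h\in I$ exists.

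Now $Z_K(p)=Z_K(I)\subseteq Z_K(g)$, so $\mathrm{(D)}_n$ gives some $m\geq1$ with $\omega:=g^m/p\in A$. Write $p=\sum_j c_jf_j$ with $c_j\in K[X]\subseteq A$. Then $g^m=\omega p=\sum_j(\omega c_j)f_j$ with $\omega c_j\in A$, hence $g\in\rt[A]{I}$.

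There are two edge cases. If $I=0$, then $Z_K(I)=K^n$, and since $K$ is infinite this forces $g=0$. If $p=0$ occurs for nonzero $I$, it can be avoided by discarding zero generators, since $h$ is nonzero whenever some $f_j\neq0$, or by noting $Z_K(p)=K^n$ again forces $g=0$.

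The main obstacle I expect is verifying carefully that the homogenization really produces $p\in I$ with $Z_K(p)$ equal to $Z_K(I)$, rather than merely empty when $Z_K(I)$ is. This is the case analysis in \Cref{lem:witness}, which does not use emptiness, so I would restate it as a standalone claim before running the argument above.
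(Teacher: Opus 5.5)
Your argument is correct. The first inclusion is exactly the paper's; for the reverse inclusion you take a somewhat different route.

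The paper runs the Rabinowitsch trick uniformly. It applies \Cref{lem:witness} to $(f_1,\dots,f_r,1-X_0g)$ in $n+1$ variables and clears denominators to get $p=g^Nh_0(X,1/g)\in I$. This $p$ satisfies only $Z_K(p)\subseteq Z_K(g)$, so it depends on $g$, and the split between algebraically closed and non-algebraically closed $K$ is hidden inside the witness lemma.

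You instead apply the homogenization directly to the generators of $I$ in the original variables, producing $p\in I$ with $Z_K(p)=Z_K(I)$. This is precisely \Cref{lem:witness-combination}, which the paper proves only later, for \Cref{prop:cutting}. You then handle the algebraically closed case separately.

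Your route avoids the auxiliary variable and yields a $p$ that depends only on $I$, not on $g$. So for non-algebraically closed $K$ it shows that everything reduces to principal ideals. The paper's route gives a single argument for all infinite fields. Your case split is genuinely unavoidable: for $K$ algebraically closed and $n\geq 2$, no single polynomial has zero set $Z_K((X_1,X_2))$.

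One caution. In the algebraically closed case, use your Rabinowitsch/weak-Nullstellensatz version rather than quoting Hilbert's theorem outright. The paper deduces Hilbert's Nullstellensatz (\Cref{cor:hilbert}) from \Cref{thmA}, which rests on this theorem, so quoting it here would be circular. With the weak form your dependencies match the paper's, whose \Cref{lem:witness} also invokes it.
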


\begin{proof}
  The inclusion $\rt[A]{I}\subseteq J(Z_K(I))$ follows from \textup{(b)}
  and~\Cref{lem:Arad-in-JZ}. For the reverse inclusion,
  let $g\in J(Z_K(I))$ with $I=(f_1,\dots,f_r)$. We apply
  Rabinowitsch's trick: in $K[X_0,X_1,\dots,X_n]$, the ideal $J=(f_1,\dots,f_r,1-X_0g)$ satisfies
  $Z_K(J)=\emptyset$, so by \Cref{lem:witness} there exists $h_0\in J$ with
  $Z_K(h_0)=\emptyset$. Write $h_0=\sum_j a_jf_j+b(1-X_0g)$ in $K[X_0,X]$ and set
  $N=\max(\deg_{X_0}h_0,\max_j\deg_{X_0}a_j)$. Substituting $X_0=1/g$ and multiplying by $g^N$
  produces $p:=g^Nh_0(X,1/g)\in I$ via $p=\sum_j\beta_jf_j$ with $\beta_j:=g^Na_j(X,1/g)\in K[X]$,
  and $Z_K(p)\subseteq Z_K(g)$ (if $g(a)\neq0$ then $p(a)=g(a)^Nh_0(a,1/g(a))\neq0$,
  since $Z_K(h_0)=\emptyset$). By $\mathrm{(D)}_n$ there is $m\geq 1$ with $g^m/p\in A$;
  hence $g^m=(g^m/p)\,p\in IA$ and $g\in\rt[A]{I}$.
\end{proof}

\begin{corollary}\label{cor:Ev-membership} 
Let $K$ be an infinite field and $K[X]\subseteq A\subseteq E_n(K)$ a subring with the
  domination property $\mathrm{(D)}_n$. Then $A\in \W_{K,n}$.
  \end{corollary}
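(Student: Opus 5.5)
The plan is to reduce \Cref{cor:Ev-membership} directly to \Cref{thm:abstract-membership} by checking its two hypotheses. Hypothesis (a), the domination property $\mathrm{(D)}_n$, is assumed outright, so nothing needs to be done there. The only work is hypothesis (b): for every $a\in K^n$ we must exhibit a permutation $\sigma_a$ with $A\subseteq\mathcal O_{P_{a,\sigma_a}}$.

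For (b) we use the containment $A\subseteq E_n(K)$ together with \Cref{def:Ev}, which writes $E_n(K)=\bigcap_{P\in\Ev_{K,n}}\mathcal O_P$. In particular, for any $a\in K^n$ and any $\sigma\in S_n$ we have $E_n(K)\subseteq\mathcal O_{P_{a,\sigma}}$. So we may take $\sigma_a$ to be any permutation, for instance the identity, and obtain
\[
  A\subseteq E_n(K)\subseteq\mathcal O_{P_{a,\mathrm{id}}}.
\]
Here $\mathcal O_{P_{a,\mathrm{id}}}$ is a genuine valuation ring on which $P_{a,\mathrm{id}}$ restricts to evaluation at $a$ on $K[X]$, by \Cref{rmk:Ev-valuative}. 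This is precisely the form of (b) that \Cref{lem:Arad-in-JZ} needs (its ``in particular'' clause already records this).

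With (a) and (b) in hand, \Cref{thm:abstract-membership} yields $A\in\W_{K,n}$. I do not expect any real obstacle: the substantive ingredients, namely the Rabinowitsch trick through \Cref{lem:witness} and the inclusion $\rt[A]{I}\subseteq J(Z_K(I))$, are already contained in \Cref{thm:abstract-membership}.

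Finally, as a remark after the proof, applying the corollary with $A=E_n(K)$ and using \Cref{cor:Ev-has-Dn} for $\mathrm{(D)}_n$ gives \Cref{thmA}.
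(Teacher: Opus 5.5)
Your proposal is correct and is essentially the paper's own proof: hypothesis (a) of \Cref{thm:abstract-membership} is assumed, and (b) holds with any choice of $\sigma_a$ because $A\subseteq E_n(K)\subseteq\mathcal O_{P_{a,\sigma}}$ for all $a\in K^n$ and $\sigma\in S_n$. Your closing remark, that \Cref{thmA} follows by taking $A=E_n(K)$ and using \Cref{cor:Ev-has-Dn}, is also exactly how the paper deduces it.
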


\begin{proof}
  Condition \textup{(b)} of \Cref{thm:abstract-membership} holds with any choice of
  $\sigma_a$, since $A\subseteq E_n(K)\subseteq\mathcal O_{P_{a,\sigma}}$ for all $a$ and
  $\sigma$.
\end{proof}



\thmGeneralNSS*

\begin{proof}
  This follows by \Cref{cor:Ev-has-Dn} and \Cref{cor:Ev-membership}.
\end{proof}

\begin{remark}\label{rmk:laksov}
  For $k=K$ infinite, both $\rt[E_n(K)]{I}$ and Laksov's $K$-radical $\rt[K]{I}$
  \cite{laksov1987} coincide with $J(Z_K(I))$, so $\rt[E_n(K)]{I}=\rt[K]{I}$. The content is not the
  equality itself but the computation of the right-hand side: the abstract $K$-radical, defined
  through Laksov's witness forms, is realized by a single holomorphy ring, with those forms replaced
  by iterated evaluation places.
\end{remark}

\subsection{Recovery of Hilbert's Nullstellensatz}

\begin{theorem}\label{thm:Ev-equals-poly}
  If $K$ is algebraically closed then $E_{n}(K)=K[X_1,\dots,X_n]$.
\end{theorem}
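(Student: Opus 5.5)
The inclusion $K[X]\subseteq E_n(K)$ holds by definition, so only $E_n(K)\subseteq K[X]$ needs proof. Take $z\in E_n(K)$ and write $z=F/G$ in reduced form in the UFD $K[X]$. As in the proof of \Cref{lem:domination}, $F/G$ is then $l$-coprime for every $l$. The goal is to show $G\in K^\times$. Suppose instead that $G$ is nonconstant, and fix an irreducible factor $q$ of $G$. We will find a point $a\in K^n$ and an order $\sigma$ with $P_{a,\sigma}(z)=\infty$, contradicting $z\in E_n(K)$.

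We argue by induction on $n$. For $n=1$ this is \Cref{prop:S-in-Ev}: $E_1(K)=\mathcal R_1(K)$, and over an algebraically closed field a polynomial without roots in $K$ is constant. For $n\geq 2$, the central observation is that $G$ is nonconstant, so $Z_K(G)\neq\emptyset$ by the classical weak Nullstellensatz. Moreover $F$ does not vanish identically on $Z_K(q)$. Indeed, $q\nmid F$, and Hilbert's Nullstellensatz together with the irreducibility of $q$ shows that $F$ vanishing on $Z_K(q)$ would force $q\mid F$.

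Now choose a variable $X_l$ that occurs in $q$, say with $\deg_{X_l} q\geq 1$. Then pick $b\in K$ such that $\ev_{b,l}(G)\neq 0$ and the specialization $\ev_{b,l}(F)/\ev_{b,l}(G)$, once reduced in $K[X_{\neq l}]$, still has a nonconstant denominator. This is possible because there are infinitely many $b$. Indeed, $q(b,X_{\neq l})$ is nonconstant for all but finitely many $b$, provided $q$ also involves another variable; if $q$ involves only $X_l$, it is linear, $q=X_l-b$, and then $P_{b,l}(z)=\infty$ immediately.

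For the cancellation requirement, we need a common factor of $F(b,\cdot)$ and $q(b,\cdot)$ to occur for only finitely many $b$. To arrange this, use the resultant of $F$ and $q$ with respect to a second variable $X_k$ appearing in $q$, chosen after a generic linear change of coordinates if needed. This resultant is a nonzero polynomial in the remaining variables. After that, $P_{b,l}(z)$ is a reduced fraction in $n-1$ variables with nonconstant denominator, so it lies outside $K[X_{\neq l}]=E_{n-1}(K)$ by induction. Hence some further iterated evaluation sends it to $\infty$, and composing gives $P_{a,\sigma}(z)=\infty$.

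The main obstacle is this specialization step: ensuring that substituting $X_l=b$ does not cancel the denominator, while keeping the argument compatible with the definition of $E_n$. That definition uses coordinate evaluations only, so a linear change of coordinates is not literally available. I would therefore first try to avoid it by choosing $l$ and $k$ among the variables actually occurring in $q$, and checking that $\operatorname{Res}_{X_k}(F,q)$ is a nonzero polynomial in which $X_l$ can be specialized generically. This works because $F$ and $q$ are coprime and $q$ has positive degree in $X_k$, so the resultant is nonzero and avoiding its roots in $X_l$ excludes only finitely many $b$.
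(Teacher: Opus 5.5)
Your overall strategy (specialize one variable at a generic value and induct on $n$) can be made to work, but the specialization step as you justify it has a real hole. The resultant $R=\operatorname{Res}_{X_k}(F,q)$ only controls common factors of $F(b,\cdot)$ and $q(b,\cdot)$ that have \emph{positive degree in $X_k$}. From $R=uF+vq$ you get that any common factor divides $R(b,\cdot)\neq 0$, hence is free of $X_k$, and nothing more. Common factors free of $X_k$ are not excluded. In particular, none of your conditions prevents $q(b,\cdot)$ from losing its $X_k$-dependence altogether.

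Concretely, take $n=3$, $z=X_3/(X_1X_2+X_3)$, $G=q=X_1X_2+X_3$, $F=X_3$, $l=1$, $k=2$. Then $R=X_3$, so $R(b,\cdot)\neq 0$ for every $b$. Also $\ev_{b,1}(G)=bX_2+X_3\neq 0$, and $q(b,\cdot)$ is nonconstant for every $b$. Yet $b=0$ gives $P_{0,1}(z)=X_3/X_3=1$, whose reduced denominator is constant. So avoiding the roots of the resultant does not give the coprimality you claim, nor even a nonconstant reduced denominator.

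The repair is cheap. Also discard the finitely many $b$ with $\deg_{X_k}q(b,\cdot)=0$. Then work with the $X_k$-primitive part $\pi$ of $q(b,\cdot)$ rather than with $q(b,\cdot)$ itself. A common factor of $\pi$ and $F(b,\cdot)$ divides $R(b,\cdot)$, so it is free of $X_k$, so it is a unit because $\pi$ is primitive. Hence $\pi$ is coprime to $\gcd(F(b,\cdot),G(b,\cdot))$ and divides the reduced denominator of $P_{b,l}(z)$. That denominator is therefore nonconstant, and your induction goes through.

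Notice, however, that you made two observations and then never used them: $Z_K(G)\neq\emptyset$, and $F\not\equiv 0$ on $Z_K(q)$. These are exactly what the paper's proof runs on. It fixes at the outset a point $a$ with $G(a)=0\neq F(a)$ and specializes at the coordinates of $a$. If $X_l-a_l\mid G$ for some $l$, then $P_{a_l,l}(z)=\infty$ at once. Otherwise, any common factor cancelled after substituting $X_l=a_l$ divides $F(a_l,X_{\neq l})$, so it does not vanish at $a_{\neq l}$. The reduced fraction therefore still has a denominator vanishing and a numerator not vanishing at $a_{\neq l}$, and induction applies. This pointwise invariant makes all the genericity, resultant and content bookkeeping unnecessary.
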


\begin{proof}
  Let $f/g\in E_n(K)$ in lowest terms with $g$ non-constant. Since $K$ is algebraically closed,
  $\dim Z_K(g)\geq n-1$ while $\dim(Z_K(f)\cap Z_K(g))\leq n-2$, so there exists $a\in Z_K(g)\setminus Z_K(f)$.
  We show by induction on $n$ that $P_{a,\sigma}(f/g)=\infty$ for some $\sigma$, contradicting
  $f/g\in E_n(K)$.

  \emph{$n=1$:} $f/g$ is in lowest terms, $g(a_1)=0$, so $P_{a_1,1}(f/g)=\infty$.

  \emph{$n\geq 2$:} If $X_l-a_l\mid g$ for some $l$, then $g(a_l,X_{\neq l})\equiv 0$ while
  $f(a_l,X_{\neq l})\not\equiv 0$ by coprimality, so $P_{a_l,l}(f/g)=\infty$ and any $\sigma$ with
  $\sigma(1)=l$ works. Otherwise, for any $l$, $g(a_l,X_{\neq l})\not\equiv 0$, and reducing to
  lowest terms produces $f''/g''\in K(X_{\neq l})$ with $g''(a_{\neq l})=0$, $f''(a_{\neq l})\neq 0$,
  $g''$ non-constant. By induction, there exists $\sigma'$ on $\{1,\dots,n\}\setminus\{l\}$ with
  $P_{a_{\neq l},\sigma'}(f''/g'')=\infty$; composing with $P_{a_l,l}$ gives $\sigma$ with
  $P_{a,\sigma}(f/g)=\infty$.
\end{proof}

\begin{proposition}\label{thm:S-equals-Ev-iff-acf}
  Let $n\geq 2$. Then $\mathcal R_n(K)=E_n(K)$ if and only if $K$ is algebraically closed.
\end{proposition}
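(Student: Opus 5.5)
We prove the two implications separately, reducing to results already established.

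\emph{($\Leftarrow$).} Assume $K$ is algebraically closed. By \Cref{prop:S-in-Ev}, $\mathcal R_n(K)\subseteq E_n(K)$, and by \Cref{thm:Ev-equals-poly}, $E_n(K)=K[X]$. Since trivially $K[X]\subseteq\mathcal R_n(K)$, we get the chain
\[
K[X]\subseteq\mathcal R_n(K)\subseteq E_n(K)=K[X],
\]
so all three rings coincide. (Alternatively, by the weak Nullstellensatz only nonzero constants have empty zero locus, so $\mathcal R_n(K)=K[X]$ directly.)

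\emph{($\Rightarrow$).} Suppose $K$ is not algebraically closed. We will produce an element of $E_n(K)\setminus\mathcal R_n(K)$, generalizing \Cref{prop:X1-witness} (the case $d=2$ mentioned in the text).

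\emph{Step 1: an anisotropic binary form.} Choose a monic irreducible $q(Y)=Y^d+c_{d-1}Y^{d-1}+\dots+c_0\in K[Y]$ with $d\geq 2$; it has no root in $K$. Its homogenization
\[
Q(X_1,X_2)=X_1^d+c_{d-1}X_1^{d-1}X_2+\dots+c_0X_2^d
\]
vanishes on $K^2$ only at the origin. Indeed, if $X_2\neq 0$ then $Q=X_2^d\,q(X_1/X_2)\neq 0$, and if $X_2=0$ then $Q=X_1^d$.

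\emph{Step 2: the candidate.} Set $\omega=X_1^d/Q$. By Step 1, $Z_K(Q)=\{0\}\subseteq Z_K(X_1)$, so \Cref{lem:domination} with $p=Q$, $g=X_1$ and $m=d\geq\deg Q$ gives $\omega\in E_2(K)$. For $n>2$ we use $\omega$ viewed in $K(X_1,\dots,X_n)$, again via \Cref{lem:domination}: $Z_K(Q)=\{0\}\times\{0\}\times K^{n-2}\subseteq Z_K(X_1)$, and $\deg Q=d\leq m$.

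\emph{Step 3: $\omega\notin\mathcal R_n(K)$.} The fraction is already in lowest terms, since $c_0\neq 0$ means $X_1\nmid Q$ and $Q$ shares no factor with $X_1^d$. Suppose $\omega=f/h$ with $Z_K(h)=\emptyset$. Then $X_1^d h=fQ$, and unique factorization gives $Q\mid h$. But $Q(0,0,\ast)=0$, so $h$ vanishes on $\{0\}\times\{0\}\times K^{n-2}$, contradicting $Z_K(h)=\emptyset$.

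The main point of care is Step 3: the lowest-terms argument must be stated correctly, namely that any representation $f/h$ of $\omega$ has $h$ divisible by the reduced denominator $Q$. This is the UFD property, and it is what forces a rational zero of every admissible denominator.
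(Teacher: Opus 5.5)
Your proof is correct when $K$ is infinite, but the proposition has no such hypothesis. Finite fields are never algebraically closed, so your $(\Rightarrow)$ direction must also handle them, and for them Step 2 fails. \Cref{lem:domination} is stated only for infinite $K$, and the hypothesis is essential: its proof uses infinitude to get $(X_l-b)\mid p\Rightarrow(X_l-b)\mid g$. For example, over $\mathbb F_2$ take $p=X_1$ and $g=X_2^2+X_2$. Then $Z_K(p)\subseteq K^2=Z_K(g)$, yet $g^m/p$ is $1$-coprime with denominator vanishing at $X_1=0$. So $P_{0,1}(g^m/p)=\infty$, and $g^m/p\notin E_2(\mathbb F_2)$ for every $m$. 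Hence for finite $K$ you have not shown $\omega\in E_n(K)$.

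The repair is the direct verification the paper uses. Since $\omega$ involves only $X_1,X_2$, evaluating the other variables leaves it unchanged. For any $a_1,a_2$, $Q(a_1,X_2)$ has $X_2^d$-coefficient $c_0\neq0$ and $Q(X_1,a_2)$ has $X_1^d$-coefficient $1$. So whichever of $X_1,X_2$ is evaluated first, no pole appears. If $(a_1,a_2)\neq(0,0)$, the final value is $a_1^d/Q(a_1,a_2)\in K$ because $Q(a_1,a_2)\neq0$. If $(a_1,a_2)=(0,0)$, evaluating $X_1$ first gives $0/(c_0X_2^d)=0$, and evaluating $X_2$ first gives $X_1^d/X_1^d=1$. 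This uses only that $Q$ is anisotropic away from the origin, so it works over every field that is not algebraically closed.

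With Step 2 replaced this way, your proof is the paper's:
\begin{itemize}
\item the same witness $X_1^d/Q$ (you dehomogenize in $X_2$ rather than $X_1$, which is immaterial);
\item the same unique-factorization argument that every denominator of $\omega$ is divisible by $Q$ and so vanishes at the origin;
\item the same treatment of $(\Leftarrow)$.
\end{itemize}
For infinite $K$, citing the domination lemma is a valid and quicker certificate of $\omega\in E_n(K)$; it just does not cover finite fields.
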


\begin{proof}
  ($\Leftarrow$) If $K$ is algebraically closed, every non-constant polynomial in $K[X]$ has a zero
  in $K^n$, so the only denominators occurring in $\mathcal R_n(K)$ are constants and
  $\mathcal R_n(K)=K[X]$. By \Cref{thm:Ev-equals-poly} we conclude $E_n(K)=K[X]$.

  ($\Rightarrow$) We prove the contrapositive, so suppose $K$ is not algebraically closed. We exhibit an
  element of $E_n(K)\setminus\mathcal R_n(K)$. Choose an irreducible
  $q(Y)=\sum_{i=0}^d c_iY^i\in K[Y]$ of degree $d\geq 2$ with $c_0,c_d\neq 0$. In particular, it has no root in $K$. 
  Its homogenization
  \[
    Q \;:=\; X_1^d\,q(X_2/X_1)\;=\;\sum_{i=0}^d c_i\,X_1^{d-i}X_2^i
  \]
  is a binary form whose only zero in $K^n$ is the subspace $\{X_1=X_2=0\}$: if $Q(a)=0$ with
  $(a_1,a_2)\neq(0,0)$, then $a_1\neq 0$ would force $q(a_2/a_1)=0$ and $a_1=0$ would force
  $c_da_2^d=0$, both impossible. Put $\omega:=X_1^d/Q$. Since $Q(0,X_2)=c_dX_2^d\neq 0$ and
  $Q(X_1,0)=c_0X_1^d\neq 0$, the numerator $X_1^d$ and the denominator $Q$ are coprime, so $\omega$
  is in lowest terms. We show that $\omega\in E_n(K)\setminus \mathcal R_n(K)$.
  
  As $\omega$ depends only on $X_1,X_2$, evaluating any of the remaining
  variables leaves it unchanged and never produces $\infty$, so it suffices to track the
  evaluations of $X_1$ and $X_2$. For $a\in K^n$ with $(a_1,a_2)\neq(0,0)$ the final value
  $a_1^d/Q(a_1,a_2)$ is finite, and no intermediate pole arises because $Q(a_1,X_2)$ has leading
  coefficient $c_d\neq 0$ and $Q(X_1,a_2)$ has leading coefficient $c_0\neq 0$ (so neither vanishes
  identically). For $(a_1,a_2)=(0,0)$, evaluating $X_1$ first gives $P_{0,1}(\omega)=0/(c_dX_2^d)=0$,
  while evaluating $X_2$ first gives $P_{0,2}(\omega)=X_1^d/(c_0X_1^d)=1/c_0$; both are finite. Hence
  $P_{a,\sigma}(\omega)\neq\infty$ for all $a$ and all $\sigma$, i.e.\ $\omega\in E_n(K)$.

  It remains to show that $\omega\notin\mathcal R_n(K)$. If $\omega=f/g$ in $K(X)$, then $fQ=gX_1^d$ and
  $\gcd(X_1^d,Q)=1$ force $Q\mid g$; hence $Z_K(g)\supseteq Z_K(Q)\ni 0$, so $\omega$
  admits no denominator nowhere vanishing on $K^n$.

  Therefore $\mathcal R_n(K)\subsetneq E_n(K)$. The instance $q(Y)=Y^2+1$ (giving $Q=X_1^2+X_2^2$),
  valid for $K=\RR$ and for $K=\Qp$ with $p\equiv 3\pmod 4$, is worked out in
  \Cref{prop:X1-witness}.
\end{proof}

\begin{corollary}[Hilbert's Nullstellensatz]\label{cor:hilbert}
  For $K$ algebraically closed and $I\subseteq K[X_1,\dots,X_n]$, $J(Z_K(I))=\sqrt{I}$.
\end{corollary}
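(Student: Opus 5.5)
The plan is to derive the corollary as an immediate consequence of \Cref{thmA} and \Cref{thm:Ev-equals-poly}. An algebraically closed field is infinite, so \Cref{thmA} applies and gives
\[
  J(Z_K(I))=\rt[E_n(K)]{I}
\]
for every ideal $I\subseteq K[X_1,\dots,X_n]$.

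Next, \Cref{thm:Ev-equals-poly} gives $E_n(K)=K[X]$. We substitute this into the definition \eqref{eq:Arad-intro} of the $A$-radical. With $A=K[X]$ we have $IA=I$, so
\[
  \rt[{K[X]}]{I}=K[X]\cap\sqrt{I}=\sqrt{I},
\]
as already noted after \eqref{eq:Arad-intro}. Chaining the two equalities yields $J(Z_K(I))=\sqrt I$.

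There is essentially no obstacle once both cited results are in hand. The only point to check is that no hypothesis is hidden. \Cref{thmA} needs only that $K$ is infinite. \Cref{thm:Ev-equals-poly} uses the dimension count for hypersurfaces over an algebraically closed field, and that count is independent of the Nullstellensatz being proved. The argument is therefore not circular, apart from the weak Nullstellensatz invoked in \Cref{lem:witness}. That invocation is harmless, because the lemma only needs some $h_0\in J$ with $Z_K(h_0)=\emptyset$, and over an algebraically closed field such an $h_0$ is a nonzero constant, which is exactly the weak form.

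Alternatively, one could bypass \Cref{thm:Ev-equals-poly} by arguing directly that $K[X]$ has $\mathrm{(D)}_n$ and then applying \Cref{cor:Ev-membership}. However, showing $p\mid g^m$ from $Z_K(p)\subseteq Z_K(g)$ is itself the Nullstellensatz for principal ideals. So I would use the route through $E_n(K)=K[X]$.
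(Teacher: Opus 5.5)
Your proposal is correct and is exactly the paper's proof: \Cref{thmA} gives $J(Z_K(I))=\rt[E_n(K)]{I}$, and \Cref{thm:Ev-equals-poly} gives $E_n(K)=K[X]$, so the $E_n(K)$-radical is the classical radical $\sqrt{I}$. Your check that an algebraically closed field is infinite, so that \Cref{thmA} applies, is a detail the paper leaves implicit.
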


\begin{proof}
  By \Cref{thmA}, $J(Z_K(I))=\rt[E_n(K)]{I}$; by \Cref{thm:Ev-equals-poly},
  $E_n(K)=K[X]$, so $\rt[E_n(K)]{I}=\sqrt{I}$.
\end{proof}

\section{A characterization of large fields via Nullstellensätze}\label{sec:characterization}

  The notion of a large field is due to Pop \cite{pop-ample}: $K$ is large
  if every smooth $K$-curve with a $K$-point has infinitely many. Equivalently $K$ is large if and only if $K$ is
  existentially closed in $K(\!(t)\!)$. The main result of this section is a characterizaction of $K$ being large in terms of a certain ring belonging to $\W_K$. 
  We first introduce the relevant ring, the rational holomorphy ring $A_n(K)$, which is a natural intermediate between the evaluation holomorphy ring $E_n(K)$ and the iterated evaluation holomorphy ring $\mathcal R_n(K)$.

\subsection{The rational holomorphy ring}\label{subsec:AK-domination}

\begin{definition}\label{def:AK}
  Let $K$ be an infinite field. Let $\mathcal S^{\mathrm{rat}}_{K}=\mathcal S^{\mathrm{rat}}_{K,n}$
  denote the set of places $\varphi$ of $K(X)$ over $K$ with $K[X]\subseteq\mathcal O_\varphi$
  (equivalently, $\varphi(X_i)\in K$ for all $i$, so $\varphi$ is centred at a point of $K^n$) and
  with residue field exactly $K$. Set
  \[
    A_n(K)\;=\;A_n(K)\;:=\;\bigcap_{\varphi\in\mathcal S^{\mathrm{rat}}_{K}}\mathcal O_\varphi .
  \]
\end{definition}

Since every iterated evaluation place $P_{a,\sigma}$ of \Cref{def:eval-places} lies in
$\mathcal S^{\mathrm{rat}}_{K}$, while $\mathcal S^{\mathrm{rat}}_{K}$ may contain further places, one has the sandwich
\[
  \mathcal R_n(K)\;\subseteq\;A_n(K)\;\subseteq\;E_{n}(K).
\]
Because $A_n(K)\subseteq E_n(K)$, \Cref{cor:Ev-membership} applies to
$A_n(K)$, and its converse holds as well: membership of $A_n(K)$ in $\W$ is governed entirely by the
domination property.

\begin{theorem}\label{thm:AK-iff-domination}
  Let $K$ be an infinite field and $n\geq 1$. Then
  \[
    A_n(K)\in \W_K\quad\Longleftrightarrow\quad A_n(K)\text{ has the domination property }\mathrm{(D)}_n.
  \]
  In either case the inclusion $\rt[A_n(K)]{I}\subseteq J(Z_K(I))$ holds unconditionally, since
  $A_n(K)\subseteq E_n(K)$ (\Cref{lem:Arad-in-JZ}).
\end{theorem}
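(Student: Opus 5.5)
The direction ($\Leftarrow$) is immediate from \Cref{cor:Ev-membership}. We have $K[X]\subseteq A_n(K)\subseteq E_n(K)$ by the sandwich preceding the statement, so if $A_n(K)$ has $\mathrm{(D)}_n$ then $A_n(K)\in\W_{K,n}$. The final sentence of the statement, the unconditional inclusion $\rt[A_n(K)]{I}\subseteq J(Z_K(I))$, is simply \Cref{lem:Arad-in-JZ} applied to $A=A_n(K)\subseteq E_n(K)$.

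For ($\Rightarrow$), assume $A_n(K)\in\W_{K,n}$ and let $p,g\in K[X]$ with $Z_K(p)\subseteq Z_K(g)$. We may assume $p\neq 0$; if $p=0$, then $Z_K(p)=K^n$ forces $g=0$ because $K$ is infinite, and $g^m/p$ is not meaningful, so we interpret the property for $p\neq0$. Take $I=(p)$. Then $g\in J(Z_K(I))=\rt[A_n(K)]{(p)}$, so there are $m\geq 1$ and $\omega\in A_n(K)$ with $g^m=\omega p$. Hence $g^m/p=\omega\in A_n(K)$, which is exactly $\mathrm{(D)}_n$. The key point is that for a principal ideal the $A$-radical condition literally reads $g^m/p\in A$.

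Neither direction has a real obstacle. The only thing to verify is that the general membership criterion of \Cref{cor:Ev-membership} applies to $A_n(K)$. This requires $A_n(K)\subseteq E_n(K)$, i.e.\ that every $P_{a,\sigma}$ lies in $\mathcal S^{\mathrm{rat}}_K$: it is a place trivial on $K$, finite on $K[X]$, and has residue field $K$. This is already asserted just before the theorem.
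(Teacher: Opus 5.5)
Your proposal is correct and follows essentially the same route as the paper: ($\Leftarrow$) via \Cref{cor:Ev-membership} using $A_n(K)\subseteq E_n(K)$, and ($\Rightarrow$) by taking $I=(p)$ with $p\neq0$, so that $g\in J(Z_K((p)))=\rt[A_n(K)]{(p)}$ gives $g^m=\omega p$ with $\omega\in A_n(K)$. Your handling of the degenerate case $p=0$ matches the paper's implicit convention.
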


\begin{proof}
  ($\Leftarrow$) Since $A_n(K)\subseteq E_n(K)$ and $A_n(K)$ has $\mathrm{(D)}_n$,
  \Cref{cor:Ev-membership} gives $A_n(K)\in \W_K$.

  ($\Rightarrow$) Assume $A_n(K)\in \W_K$ and let $p,g\in K[X]$ with $p\neq 0$ and
  $Z_K(p)\subseteq Z_K(g)$. Take $I=(p)$; then $g$ vanishes on $Z_K(I)=Z_K(p)$,
  so $g\in J(Z_K(I))=\rt[A_n(K)]{(p)}$. Hence $g^m=\omega\,p$ for some $m\geq 1$ and $\omega\in A_n(K)$, i.e.\
  $g^m/p=\omega\in A_n(K)$. This is $\mathrm{(D)}_n$.
\end{proof}

\Cref{thm:AK-iff-domination} reduces the entire question to the single property
$\mathrm{(D)}_n$ for $A_n(K)$. The main theorem of this section is to show that such a property will fail over non-large fields and holds over large ones.

\subsection{Failure of domination over non-large fields}\label{subsec:Q-failure}

Let us first show a concrete counterexample in the case $K=\QQ$, showing that $A_2(\QQ)$ fails $\mathrm{(D)}_2$. We realize this with a rank-zero elliptic curve. Both this counterexample and its generalization below rest on the following technical lemma. 

\begin{lemma}\label{lem:technical}
  Let $K$ be a field and $p\in K[X,Y]$ with $p(0,0)=0$ and $\partial_Xp(0,0)\neq0$. Then the
  substitution homomorphism
  \[
    \psi\colon K[[Y,t]]\longrightarrow K[[X,Y]],\qquad Y\mapsto Y,\quad t\mapsto p(X,Y),
  \]
  is an isomorphism of $K$-algebras. In particular, $\varphi:=\psi^{-1}$ satisfies $\varphi(Y)=Y$
  and $\varphi(p)=t$, and restricts to an embedding of $K[X,Y]$ into $K[[Y,t]]$.
\end{lemma}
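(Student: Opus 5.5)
The plan is to prove that $\psi$ is an isomorphism by an inverse-function-theorem argument for formal power series in two variables. First, $\psi$ is well defined. Since $p(0,0)=0$, the element $p(X,Y)$ lies in the maximal ideal $\mathfrak m=(X,Y)$ of the complete local ring $K[[X,Y]]$. Hence substituting $t\mapsto p$, $Y\mapsto Y$ gives a continuous local $K$-algebra homomorphism $K[[Y,t]]\to K[[X,Y]]$. It is the unique continuous map with these values, and it sends $(Y,t)$ into $\mathfrak m$.

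Next, look at cotangent spaces. Write $p=cX+dY+(\text{higher order})$ with $c=\partial_Xp(0,0)\neq0$. The induced map $(Y,t)/(Y,t)^2\to\mathfrak m/\mathfrak m^2$ sends $\bar Y\mapsto\bar Y$ and $\bar t\mapsto c\bar X+d\bar Y$. Its matrix is $\begin{pmatrix}1&d\\0&c\end{pmatrix}$, which is invertible because $c\neq0$.

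The key step, and the only real obstacle, is to go from bijectivity on $\mathfrak m/\mathfrak m^2$ to bijectivity of $\psi$. I would use the standard lemma: a local homomorphism of complete Noetherian local $K$-algebras with residue field $K$ is surjective when it induces a surjection on cotangent spaces. The proof is graded. Surjectivity on $\mathfrak m/\mathfrak m^2$ gives surjectivity on every $\mathfrak m^k/\mathfrak m^{k+1}$, since these are spanned by monomials in $\bar X,\bar Y$, which are images of products. One then builds preimages as convergent series by successive approximation, using completeness of the source.

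Alternatively, one can construct the inverse directly. Solve $t=p(X,Y)$ for $X$ as a series $X=h(Y,t)\in K[[Y,t]]$ with $h(0,0)=0$, using formal Hensel's lemma or the implicit function theorem applied to $F(X)=p(X,Y)-t$ over $K[[Y,t]]$. This works because $F(0)\in(Y,t)$ and $F'(0)\equiv c$ is a unit. Then define $\varphi\colon K[[X,Y]]\to K[[Y,t]]$ by $X\mapsto h$, $Y\mapsto Y$. We have $\psi\circ\varphi(X)=h(Y,p)$, and $p(h(Y,p),Y)=p$ by construction of $h$. Uniqueness of Hensel roots in $\mathfrak m$ then shows that $\psi\circ\varphi$ and $\varphi\circ\psi$ are the identity on generators, hence everywhere by continuity.

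For injectivity via the first route, note that a surjection $K[[Y,t]]\to K[[X,Y]]$ between two-dimensional regular local domains has kernel a prime of height $0$, hence zero. Either way $\psi$ is an isomorphism.

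Finally, $\varphi=\psi^{-1}$ satisfies $\varphi(Y)=Y$ and $\varphi(p)=t$, since $\psi(Y)=Y$ and $\psi(t)=p$. Composing $\varphi$ with the inclusion $K[X,Y]\into K[[X,Y]]$ gives an injective homomorphism $K[X,Y]\into K[[Y,t]]$, which is the claimed embedding.
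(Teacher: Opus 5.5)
Your proposal is correct, and your ``alternative'' route is exactly the paper's proof. The paper applies Hensel's lemma to $p(Z,Y)-t$ over $K[[Y,t]]$ to get $\eta\in(Y,t)$ with $p(\eta,Y)=t$, and defines $\varphi$ by $X\mapsto\eta$, $Y\mapsto Y$. It reads off $\varphi\circ\psi=\mathrm{id}$ directly from $p(\eta,Y)=t$. It obtains $\psi(\eta)=X$ from uniqueness of roots in $\mathfrak m$ of $p(Z,Y)-p(X,Y)$ over $K[[X,Y]]$. Note that uniqueness is needed only for $\psi\circ\varphi$; the other composite is immediate from the construction.

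Your primary route is genuinely different. You get surjectivity from the formal inverse function theorem: an invertible cotangent map, surjectivity on each $\mathfrak m^k/\mathfrak m^{k+1}$, and successive approximation. You get injectivity from Krull dimension, since the kernel is a prime $P$ with $\dim K[[Y,t]]/P=2=\dim K[[Y,t]]$. This argument is more conceptual and transfers verbatim to any local homomorphism of complete regular local $K$-algebras of equal dimension, with residue field $K$ and bijective cotangent map. However, it uses dimension theory and gives no explicit formula for $\varphi$. The Hensel construction is elementary and names $\varphi(X)=\eta$, which the paper later uses when it writes $w(X)=v(\eta)\geq0$. In your route, $\varphi(X)\in(Y,t)$ still holds, because the inverse of a local isomorphism is local.

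If you want the first route to stay elementary too, drop the dimension argument. The graded map $K[\bar Y,\bar t]\to K[\bar X,\bar Y]$ is surjective in each degree between spaces of equal finite dimension, hence bijective. So for $g\in(Y,t)^k\setminus(Y,t)^{k+1}$ the initial form of $g$ maps to a nonzero class, giving $\psi(g)\notin\mathfrak m^{k+1}$, and in particular $\psi(g)\neq0$.
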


\begin{proof}
  Write $\mathfrak m=(X,Y)\subseteq K[[X,Y]]$ and $\mathfrak n=(Y,t)\subseteq K[[Y,t]]$. Since
  $p\in\mathfrak m$, the substitution defining $\psi$ converges $\mathfrak m$-adically and $\psi$ is
  a local continuous homomorphism, i.e.\ $\psi(\mathfrak n^k)\subseteq\mathfrak m^k$ for all $k$. We
  produce an inverse.

  The ring $A:=K[[Y,t]]$ is complete (for $\mathfrak n$) and hence henselian. The polynomial $F(Z):=p(Z,Y)-t\in A[Z]$ satisfies
  $F(0)=p(0,Y)-t\in\mathfrak n$ (the constant term of $p(0,Y)$ is $p(0,0)=0$), and
  $F'(0)=\partial_Xp(0,Y)$ is a unit of $A$ (its constant term is $\partial_Xp(0,0)\neq0$).
  Hensel's lemma yields $\eta\in\mathfrak n$ with $p(\eta,Y)=t$. Let
  $\varphi\colon K[[X,Y]]\to K[[Y,t]]$ be the map given by $X\mapsto\eta$, $Y\mapsto Y$; again a well-defined local continuous homomorphism since $\eta\in\mathfrak n$.

  Both $\varphi\circ\psi$ and $\psi\circ\varphi$ are continuous endomorphisms, therefore it suffices to show they fix $Y,t$ and $X,Y$ respectively. For $\varphi\circ\psi$ this follows directly since $\varphi(\psi(Y))=Y$ and $\varphi(\psi(t))=\varphi(p)=p(\eta,Y)=t$, whence $\varphi\circ\psi=\mathrm{id}$. For $\psi\circ\varphi$, we have $\psi(\varphi(Y))=Y$ so it remains to show that $\psi(\varphi(X))=X$. Note that $\psi(\varphi(X))=\psi(\eta)$. Applying $\psi$ to $p(\eta,Y)=t$ gives $p(\psi(\eta),Y)=\psi(t)=p(X,Y)$. Thus $X$ and $\psi(\eta)$ are two roots, both in $\mathfrak m$, of
  $f(Z):=p(Z,Y)-p(X,Y)\in K[[X,Y]][Z]$. Note that $f'(Z)=\partial_Zp(Z,Y)$ is a unit (constant term
  $\partial_Xp(0,0)\neq0$). By the uniqueness part of Hensel's lemma we get that $\psi(\eta)=X$, hence $\psi\circ\varphi=\mathrm{id}$.

  The last assertions follow immediately from the construction of $\varphi$.
\end{proof}

\begin{theorem}\label{thm:Q-fails-domination}
  For $K=\QQ$ and $n=2$, put $p=Y^2-X^3+X$ and $g=Y$. Then $Z_\QQ(p)\subseteq Z_\QQ(g)$,
  yet $g^m/p\notin A_2(\QQ)$ for every $m\geq 1$. Hence $A_2(\QQ)$ fails the domination property
  $\mathrm{(D)}_2$, so $A_2(\QQ)\notin\W_{\QQ,2}$.
\end{theorem}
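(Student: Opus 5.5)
Four steps: compute the rational points of $E$, build one rational place through the Hensel isomorphism of \Cref{lem:technical}, check that $g^m/p$ has a pole there, and apply \Cref{thm:AK-iff-domination}.

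\textbf{Step 1: the rational points.} The curve $E\colon Y^2=X^3-X$ is the congruent-number curve for $n=1$, and it has rank $0$ over $\QQ$. So $E(\QQ)$ is its torsion, consisting of $(0,0)$, $(\pm1,0)$ and the point at infinity. Hence $Z_\QQ(p)=\{(0,0),(1,0),(-1,0)\}\subseteq Z_\QQ(Y)$. We take this classical fact as known.

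\textbf{Step 2: a rational place of residue field $\QQ$ at the origin.} We have $p(0,0)=0$ and $\partial_Xp(0,0)=-3X^2+1|_{0}=1\neq0$. By \Cref{lem:technical} we get an embedding $\varphi\colon\QQ[X,Y]\into\QQ[[Y,t]]$ with $\varphi(Y)=Y$ and $\varphi(p)=t$. The plan is to compose $\varphi$ with a rank-two valuation on $\QQ[[Y,t]]$: first measure the $t$-adic order, then the $Y$-adic order of the leading coefficient in $\QQ[[Y]]$. Concretely, order the exponents $(i,j)$ of $t^iY^j$ lexicographically, with $t$ infinitesimally smaller than every power of $Y$. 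This is a valuation on $\QQ((Y))((t))$, trivial on $\QQ$.

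\textbf{Step 3: the restriction is a place in $\mathcal S^{\mathrm{rat}}_{\QQ}$, and $g^m/p$ has a pole there.} Since $\varphi$ is injective, it extends to $\QQ(X,Y)\into\operatorname{Frac}\QQ[[Y,t]]\subseteq\QQ((Y))((t))$. The restricted valuation $v$ has the following properties:
\begin{itemize}
\item It is nonnegative on $\QQ[X,Y]$, because the image lies in $\QQ[[Y,t]]$.
\item It is centred at $(0,0)$, because $\varphi(X)=\eta$ and $Y$ lie in the maximal ideal.
\item Its residue field embeds into that of $\QQ((Y))((t))$ with this rank-two valuation, which is $\QQ$. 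So the residue field is exactly $\QQ$.
\end{itemize}
Thus $v$ defines a place in $\mathcal S^{\mathrm{rat}}_{\QQ,2}$. Finally, $v(g^m/p)=v(Y^m)-v(t)=(0,m)-(1,0)<0$ in the lex order, so $g^m/p\notin\mathcal O_v\supseteq A_2(\QQ)$ for every $m\geq1$.

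\textbf{Step 4: conclusion.} Step 3 shows that $\mathrm{(D)}_2$ fails, and \Cref{thm:AK-iff-domination} then gives $A_2(\QQ)\notin\W_{\QQ,2}$.

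\textbf{Main obstacle.} The delicate point is verifying that the composite valuation genuinely has residue field $\QQ$ and is trivial on $\QQ$. This is where the construction uses the smooth branch rather than any Diophantine input.

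Note that the argument of Step 3 never used the emptiness of rational points on the branch. This suggests the place exists whenever $p$ is smooth at a rational point with $g(0)=0$. That seems suspicious, but the real constraint is $Z_K(p)\subseteq Z_K(g)$, which is where rank zero enters through Step 1. The failure is consistent: had $E$ infinitely many rational points, $g$ would not vanish on $Z(p)$, so the hypothesis would fail rather than the conclusion.
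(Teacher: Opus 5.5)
Your proof is correct and follows the paper's argument essentially step for step. You use rank zero to get $Z_\QQ(p)\subseteq Z_\QQ(Y)$, and you use \Cref{lem:technical} to get $\varphi$ with $\varphi(Y)=Y$ and $\varphi(p)=t$. You then pull back the lexicographic rank-two valuation with $v(t)=(1,0)$, $v(Y)=(0,1)$, which gives a place in $\mathcal S^{\mathrm{rat}}_{\QQ}$ at which $w(Y^m/p)=(-1,m)<0$ for every $m$. Your explicit realization of that valuation on $\QQ((Y))((t))\supseteq\operatorname{Frac}\QQ[[Y,t]]$ pins down precisely the valuation that the paper specifies only by its values on $t$ and $Y$.
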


\begin{proof}
Consider the elliptic curve $E:Y^2=X^3-X$. It is well known that $E$ is a counterexample to largeness: it has rank $0$,
  with $E(\QQ)=\{(0,0),(1,0),(-1,0)\}$, and $(0,0)$ is a smooth point. Since all $\QQ$-points of
  $E$ lie on $\{Y=0\}$, taking $p=Y^2-X^3+X$ and $g=Y$ gives
  $Z_\QQ(p)=\{(0,0),(1,0),(-1,0)\}\subseteq\{Y=0\}=Z_\QQ(g)$.
  
Since $p(0,0)=0$ and $\partial_X p(0,0)=1$, \Cref{lem:technical} provides an isomorphism
  $\varphi\colon\QQ[[X,Y]]\to\QQ[[Y,t]]$ with $\varphi(Y)=Y$ and $\varphi(p)=t$; in particular
  $\varphi$ is injective on $\QQ[X,Y]$. Now, let $v$ be the valuation of $\QQ[[Y,t]]$ into $\ZZ^2_{\mathrm{lex}}$ with $v(t)=(1,0)$
  and $v(Y)=(0,1)$ and set $w:=v\circ\varphi$ on $\QQ(X,Y)$. Being a composite of two
  $\QQ$-rational places, $w$ has residue field exactly $\QQ$; moreover $w(Y)=(0,1)\geq0$ and $w(X)=v(\eta)\geq0$, so
  $\QQ[X,Y]\subseteq\mathcal O_w$. Thus
  $w\in\mathcal S^{\mathrm{rat}}_\QQ$ and $A_2(\QQ)\subseteq\mathcal O_w$.

  Finally, $w(p)=v(t)=(1,0)$ while $w(g)=v(Y)=(0,1)$, whence, for every $m\geq1$,
  \[
    w(g^m/p)\;=\;m\,(0,1)-(1,0)\;=\;(-1,\,m)\;<\;0 .
  \]
  Therefore $g^m/p\notin\mathcal O_w\supseteq A_2(\QQ)$ for every $m\geq1$ at once: a single
  valuation witnesses the failure of $\mathrm{(D)}_2$.
\end{proof}

The proof used only two features of the pair $(\QQ,E)$: the point $(0,0)$ is a smooth point of
$E$, which allowed us to solve $p=t$ formally near it; and $Z_\QQ(p)$ is finite, which is what
lies behind the choice of $g=Y$. The next criterion, valid over any infinite
field, generalizes the previous proof. 

\begin{theorem}\label{thm:nonlarge-fails-domination}
  Let $K$ be an infinite field and $p\in K[X,Y]$ irreducible such that the curve $\{p=0\}$ has a
  smooth $K$-point and $Z_K(p)$ is finite. Then $A_2(K)$ fails the domination property
  $\mathrm{(D)}_2$; hence $A_2(K)\notin\W_K$.
\end{theorem}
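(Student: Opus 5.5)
Following the proof of \Cref{thm:Q-fails-domination}, we build a single $K$-rational place centred at the smooth point that separates $p$ from a suitable $g$. First we normalize coordinates. Let $P_0\in Z_K(p)$ be the given smooth $K$-point. Translating by a $K$-rational affine change of coordinates (which preserves $K[X,Y]$, $A_2(K)$ and the class of rational places), we may assume $P_0=(0,0)$. Smoothness gives $(\partial_Xp(0,0),\partial_Yp(0,0))\neq(0,0)$; after possibly swapping $X$ and $Y$ we may assume $\partial_Xp(0,0)\neq0$. Then \Cref{lem:technical} provides the isomorphism $\varphi\colon K[[X,Y]]\to K[[Y,t]]$ with $\varphi(Y)=Y$, $\varphi(p)=t$, injective on $K[X,Y]$.

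Next we choose $g$. Since $Z_K(p)=\{P_0,P_1,\dots,P_k\}$ is finite, we want $g\in K[X,Y]$ vanishing on all of $Z_K(p)$ and with $\varphi(g)$ of small, controlled order in $Y$ modulo $t$. The natural candidate is the product $g=\prod_{i}\ell_i$ of linear forms $\ell_i$, one through each $P_i$. We take $\ell_0=Y$, and for $i\geq1$ an affine linear $\ell_i$ with $\ell_i(P_i)=0$ and $\ell_i(0,0)\neq0$; this is possible since $P_i\neq P_0$ and $K$ is infinite. Then each $\ell_i$ with $i\geq1$ is a unit in $K[[X,Y]]$. Hence $\varphi(g)=Y\cdot u$ with $u$ a unit of $K[[Y,t]]$, and $Z_K(p)\subseteq Z_K(g)$ by construction.

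Now we define the valuation exactly as before. Let $v$ be the $\ZZ^2_{\mathrm{lex}}$-valuation on $K[[Y,t]]$ with $v(t)=(1,0)$ and $v(Y)=(0,1)$, that is, first $t$-adic, then $Y$-adic on the residue ring $K[[Y]]$. Put $w=v\circ\varphi$ on $K(X,Y)$, which makes sense because $\varphi$ is injective on $K[X,Y]$. Then $w$ is a composite of $K$-rational places, so its residue field is exactly $K$, and it is nonnegative on $X,Y$ because $\varphi(X)=\eta\in(Y,t)$. So $w\in\mathcal S^{\mathrm{rat}}_K$ and $A_2(K)\subseteq\mathcal O_w$. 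Finally $w(g)=v(Y)+v(u)=(0,1)$ and $w(p)=(1,0)$, so $w(g^m/p)=(-1,m)<0$ for all $m\geq1$. Thus (D)$_2$ fails for the pair $(p,g)$, and \Cref{thm:AK-iff-domination} gives $A_2(K)\notin\W_K$.

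The main point requiring care is the choice of $g$: it must vanish at the other $K$-points yet contribute only $v$-value $(0,1)$. The unit argument handles this, since any $h\in K[X,Y]$ with $h(0,0)\neq0$ is a unit in $K[[X,Y]]$, so $v(\varphi(h))=0$. One should also check that the coordinate change is harmless: it is a $K$-automorphism of $K[X,Y]$ and therefore permutes $\mathcal S^{\mathrm{rat}}_K$ and fixes $A_2(K)$ setwise. Irreducibility of $p$ is not really needed beyond ensuring $p\neq0$.
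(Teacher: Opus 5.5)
Your proposal is correct and follows the paper's proof essentially step for step. You use the same affine normalization, the same application of \Cref{lem:technical}, and the same $\ZZ^2_{\mathrm{lex}}$-valuation $w=v\circ\varphi$ yielding $w(g^m/p)=(-1,m)<0$. The only difference is cosmetic: the paper takes $g=\prod_{b\in B}(Y-b)\in K[Y]$, with $B$ the set of second coordinates of the points of $Z_K(p)$, so that $\varphi(g)=g(Y)$ directly, whereas your product of $Y$ with affine linear forms that are units at the origin gives the same value $w(g)=(0,1)$.
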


\begin{proof}
  Let $a$ be a smooth $K$-point of $\{p=0\}$. As failure of $\mathrm{(D)}_2$ only asserts the
  existence of a pair, we may apply a $K$-affine automorphism (which permutes
  $\mathcal S^{\mathrm{rat}}_K$, fixes $A_2(K)$, and preserves the inclusion of zero loci) and
  assume $a=(0,0)$ with $\partial_X p(0,0)\neq0$ (swapping $X$ and $Y$ if necessary).

  We first construct $g$. Let $B:=\{b\in K:(a',b)\in Z_K(p)\text{ for some }a'\in K\}$ be the set
  of second coordinates of points of $Z_K(p)$; it is finite since $Z_K(p)$ is, and $0\in B$. Put
  \[
    g\;:=\;\prod_{b\in B}(Y-b)\;\in\;K[Y],\qquad\text{so that}\qquad
    Z_K(p)\;\subseteq\;K\times B\;=\;Z_K(g).
  \]

  Since $p(0,0)=0$ and $\partial_Xp(0,0)\neq0$, \Cref{lem:technical} provides an isomorphism
  $\varphi\colon K[[X,Y]]\to K[[Y,t]]$ with $\varphi(Y)=Y$ and $\varphi(p)=t$. Exactly as in the
  proof of \Cref{thm:Q-fails-domination}, the valuation $w:=v\circ\varphi$, with $v$ the
  valuation of $K[[Y,t]]$ given by $v(t)=(1,0)$, $v(Y)=(0,1)$ in $\ZZ^2_{\mathrm{lex}}$, lies in
  $\mathcal S^{\mathrm{rat}}_K$, so $A_2(K)\subseteq\mathcal O_w$.

  As $g$ depends only on $Y$, $\varphi(g)=g(Y)$; the factors $Y-b$ with $b\neq0$ are $v$-units,
  while $v(Y)=(0,1)$, so $w(g)=(0,1)$; and $w(p)=v(t)=(1,0)$. Hence, for every $m\geq1$,
  \[
    w(g^m/p)\;=\;m\,(0,1)-(1,0)\;=\;(-1,\,m)\;<\;0,
  \]
  so $g^m/p\notin\mathcal O_w\supseteq A_2(K)$ and $\mathrm{(D)}_2$ fails.
\end{proof}

\begin{corollary}\label{cor:nonlarge}
  If $K$ is an infinite field that is not large, then $A_2(K)\notin\W_K$.
\end{corollary}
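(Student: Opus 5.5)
The plan is to reduce \Cref{cor:nonlarge} to \Cref{thm:nonlarge-fails-domination}. For that we need an irreducible polynomial $p\in K[X,Y]$ whose plane curve $\{p=0\}$ has a smooth $K$-point while $Z_K(p)$ is finite.

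\emph{Step 1: get a curve from non-largeness.} Since $K$ is not large, there is a smooth $K$-curve $C$ with a $K$-point but only finitely many $K$-points. We may take $C$ affine and integral by passing to the irreducible component through the given point and an open affine neighbourhood of it; this only removes points. Since $C$ is smooth at a $K$-rational point, it is geometrically integral (a standard fact: it is geometrically reduced and the component through a smooth rational point is geometrically irreducible). Its function field $F$ is therefore a regular function field of one variable over $K$.

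\emph{Step 2: make the curve planar.} This is the main obstacle. We would choose a separating transcendental $x\in F$ regular at the point $c$ that is a local parameter there (for example $x$ a uniformizer at $c$; then $dx\neq 0$, so $x$ is separating). Next we pick a primitive element $y$ of $F/K(x)$ that is regular at $c$, using the infinitude of $K$ to take a generic $K$-linear combination of generators. Let $p(X,Y)$ be the minimal relation of $(x,y)$, which is irreducible. The map $C\dashrightarrow\{p=0\}$ is birational, and at $c$ we have $\partial_Xp$ or $\partial_Yp\neq 0$, ensured because $x$ is a uniformizer. For this we would adjust $y$ by adding a suitable multiple of elements vanishing at $c$, so that the image point $(x(c),y(c))$ is a smooth point of the plane model and has a unique preimage. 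The plane curve then has a smooth $K$-point, the image of $c$.

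\emph{Step 3: check finiteness of $Z_K(p)$.} The birational map is an isomorphism between dense opens $U\subseteq C$ and $V\subseteq\{p=0\}$. Hence $V(K)$ is finite, and the complement of $V$ in the plane curve is a finite set of closed points. Therefore $Z_K(p)$ is finite. Applying \Cref{thm:nonlarge-fails-domination} then gives that $A_2(K)$ fails $\mathrm{(D)}_2$, hence $A_2(K)\notin\W_K$.

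A convenient alternative for Step 2 would avoid the delicate genericity argument. Largeness is equivalent to every plane curve $\{p=0\}$ with a smooth $K$-point having infinitely many $K$-points; this equivalence is standard via birational plane models, and we would cite Pop's equivalent formulations. If that citation is acceptable, the corollary follows immediately from \Cref{thm:nonlarge-fails-domination}.
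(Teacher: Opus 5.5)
Your proposal is correct. Your closing ``convenient alternative'' is exactly the paper's proof: the paper invokes Pop's plane-curve criterion \cite[Prop.~2.1]{pop-survey}, which for non-large $K$ directly supplies an irreducible $p=Y+\sum_{i+j>1}a_{ij}X^iY^j$ with $Z_K(p)$ finite and $(0,0)$ a smooth point, and then applies \Cref{thm:nonlarge-fails-domination}.

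Your main route instead reproves the needed direction of that criterion by building a birational plane model by hand. It is self-contained and sound in outline, but Step~2 understates the work. That $x$ is a uniformizer at $c$ does not by itself make $P=(x(c),y(c))$ smooth: if another place of $F$ lands on $P$, the image point is a node. What you actually need is that $c$ is the \emph{only} place of $F$ centred at $P$, counting places at infinity of the affine $C$ and non-rational places. Given that, smoothness follows:
\begin{itemize}
\item the integral closure of $\mathcal O_P$ is $\mathcal O_c$, and it is finite over $\mathcal O_P$;
\item $\mathfrak m_P\mathcal O_c=\mathfrak m_c$, because $x-x(c)\in\mathfrak m_P$ is a uniformizer;
\item both residue fields are $K$, so Nakayama gives $\mathcal O_P=\mathcal O_c$.
\end{itemize}

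The uniqueness itself can be arranged by weak approximation. Pick $z\in F$ with $v_c(z)\geq 1$ and $v_Q(z)<\min(0,v_Q(y))$ at each of the finitely many other places $Q$ with $x(Q)=x(c)$. Then $y+\lambda z$ ($\lambda\in K^\times$) has the same value $y(c)$ at $c$ and a pole at every such $Q$. Moreover, each proper intermediate field of the separable extension $F/K(x)$ contains $y+\lambda z$ for at most one $\lambda$, so all but finitely many $\lambda\in K$ keep the element primitive.

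The trade-off is length versus self-containedness. The citation is one line and already gives the smooth point at the origin in normalized form, whereas your construction relies on standard function-field facts (separability of a uniformizer, weak approximation, finiteness of the normalization) that you would need to spell out.
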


\begin{proof}
  By Pop's plane-curve criterion for largeness \cite[Prop.\ 2.1]{pop-survey}, if $K$ is not
  large there exist an irreducible polynomial $p=Y+\sum_{i+j>1}a_{ij}X^iY^j\in K[X,Y]$ for which $Z_K(p)$ is finite and $(0,0)$ is a smooth point of the curve. We conclude by \Cref{thm:nonlarge-fails-domination}.
\end{proof}

\subsection{Largeness implies domination}\label{subsec:large-implies}

We show the converse in the following theorem for perfect fields. 

\begin{theorem}\label{thm:large-implies-Dn}
  If $K$ is a perfect large field, then $A_n(K)$ satisfies $\mathrm{(D)}_n$ for every $n\geq 1$; hence
  $A_n(K)\in\W_K$.
\end{theorem}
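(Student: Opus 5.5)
The plan is to argue by contradiction and use an ultraproduct to turn a sequence of bad places into a single existential statement over an elementary extension of $K$. Then largeness lets us pull that statement down to $K$. By \Cref{thm:AK-iff-domination}, it suffices to prove $\mathrm{(D)}_n$ for $A_n(K)$.

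Fix $p,g\in K[X]$ with $p\neq 0$ and $Z_K(p)\subseteq Z_K(g)$, and suppose no $m\geq 1$ gives $g^m/p\in A_n(K)$. Then for each $m$ there is a place $\varphi_m\in\mathcal S^{\mathrm{rat}}_K$, with valuation $v_m$, such that $v_m(p)>m\,v_m(g)$.

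First I observe what a single such place forces. Write $a=\varphi_m(X)\in K^n$ for its centre. Since $p,g\in\mathcal O_{\varphi_m}$, the inequality gives $v_m(p)>0$, so $p(a)=0$. If $v_m(g)=0$, then $g(a)\neq 0$, contradicting $Z_K(p)\subseteq Z_K(g)$. Hence $0<m\,v_m(g)<v_m(p)$ for every $m$. No single place yields a contradiction by itself, which is why we pass to an ultraproduct.

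Let $\mathcal U$ be a nonprincipal ultrafilter on $\NN$. Form $(F^*,v^*)=\prod_{\mathcal U}(K(X),v_m)$. Its residue field is the ultrapower $K^*:=K^{\mathcal U}$, and $K^*$ is still perfect and large, being elementarily equivalent to $K$. Put $x^*=[(X)_m]\in (F^*)^n$. By Łoś, $v^*(g(x^*))>0$ and $v^*(p(x^*))>k\,v^*(g(x^*))$ for every standard $k$.

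Now coarsen $v^*$ by the largest convex subgroup $\Delta$ of the value group not containing $v^*(p(x^*))$. This subgroup contains $v^*(g(x^*))$. Let $w$ be the coarsened valuation with residue field $L$. Then $w(g(x^*))=0<w(p(x^*))$. The residue tuple $\bar a:=\mathrm{res}_w(x^*)\in L^n$ therefore satisfies
\[
p(\bar a)=0\quad\text{and}\quad g(\bar a)\neq 0.
\]
Moreover $L$ carries the induced valuation $\bar v$ with value group $\Delta$. The residue field of $(L,\bar v)$ is $K^*$, and $\bar v$ is trivial on $K^*$.

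The key step is to show that $K^*$ is existentially closed in $L$. Since the formula $\exists x\,(p(x)=0\wedge g(x)\neq 0)$ involves only finitely many elements, it suffices to work with a finitely generated subextension $L_0/K^*$ containing $\bar a$. This $L_0$ carries a $K^*$-rational place with residue field exactly $K^*$ and a finitely generated value group $\Gamma\cong\ZZ^r$ (Abhyankar). I would then argue in three moves:
\begin{enumerate}
\item Embed the henselization of $(L_0,\bar v)$ into a Hahn or iterated Laurent series field $K^*(\!(t_1)\!)\cdots(\!(t_r)\!)$. This is a Kaplansky-type embedding, and it is where I expect perfectness of $K$ to be needed: it controls the immediate and inseparable defects.
\item Use that a large field is existentially closed in its Laurent series field, and that $K^*(\!(t)\!)$ is again large. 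Iterating, $K^*$ is existentially closed in $K^*(\!(t_1)\!)\cdots(\!(t_r)\!)$.
\item Conclude that $K^*$ is existentially closed in $L_0$, hence in $L$.
\end{enumerate}
I expect this to be the main obstacle. If the embedding is awkward, I would instead cite the known characterization that a perfect field $k$ is large if and only if it is existentially closed in every extension admitting a $k$-rational place with residue field $k$, as in Pop's survey.

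Granting this, $K^*$ contains a point $a^*$ with $p(a^*)=0$ and $g(a^*)\neq 0$. Since $K^*\equiv K$, Łoś gives such a point in $K^n$. This contradicts $Z_K(p)\subseteq Z_K(g)$, so $\mathrm{(D)}_n$ holds, and \Cref{cor:Ev-membership} gives $A_n(K)\in\W_K$.
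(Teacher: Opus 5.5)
Your argument is the paper's proof. It uses the same ultraproduct of bad places $v_m$, the same \L{}o\'s step giving $v^*(p(x^*))>k\,v^*(g(x^*))$ for all standard $k$, and the same coarsening. Your $\Delta$ contains the convex subgroup generated by $v^*(g(x^*))$ that the paper uses, which is all that matters. It then produces the same residue point $\bar a\in L^n$ with $p(\bar a)=0\neq g(\bar a)$ and descends through $K\preceq K^*$. Your fallback for the key step is exactly the paper's input \emph{(b)}: Kuhlmann's theorem that a perfect large field is existentially closed in every extension admitting a rational place with residue field that field. Only this direction is needed. One point the paper makes explicit and you leave implicit: $K^*$ sits inside $L$ via the diagonal copy of $K^{\mathcal U}$ in $\mathcal O_{v^*}$. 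This is what makes $\bar v$ a $K^*$-rational place of $L$.

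You should commit to that citation, because the three-move sketch does not work as stated.
\begin{itemize}
\item Abhyankar's inequality only bounds the \emph{rational rank} of the value group of $\bar v|_{L_0}$. Unless the place is Abhyankar, that group need not be finitely generated. For example, a rank-one place of $K^*(x,y)$ with residue field $K^*$ can have value group $\ZZ[1/2]$.
\item Even when the value group is free of finite rank, its ordering need not be lexicographic; $\ZZ+\sqrt{2}\,\ZZ\subseteq\RR$ is an example.
\item So a valuation-preserving, Kaplansky-type embedding lands at best in a Hahn field $K^*(\!(t^\Gamma)\!)$. Your second move, iterating the $K(\!(t)\!)$ statement, says nothing about such fields.
\end{itemize}
Reducing to a well-behaved rational place, and controlling defect in positive characteristic (where perfectness enters), is precisely the content of Kuhlmann's theorem. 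The key step therefore has to be discharged by that citation, as the paper does.
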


\begin{proof}
  We use two facts. \emph{(a)} Largeness is an elementary property: Pop's plane-curve criterion
  \cite[Prop.\ 2.1]{pop-survey} expresses it by an explicit countable scheme of first-order
  axioms. Perfectness is elementary within each fixed characteristic ($\forall x\,\exists
  y\,(y^p=x)$ in characteristic $p$; automatic in characteristic $0$), and elementary extensions
  preserve the characteristic; so every elementary extension of a perfect large field is perfect
  and large. \emph{(b)} (Kuhlmann \cite[Thm.\ 17]{kuhlmann2004}; cf.\ also
  \cite{pop-ample,jarden-patching}.) If $k$ is a \emph{perfect} large field and $F/k$ is
  \emph{any} field extension admitting a \emph{$k$-rational place}---a place $F\to
  k\cup\{\infty\}$ trivial on $k$ with residue field $k$---then $k$ is existentially closed in
  $F$. (For imperfect $k$ of positive characteristic an additional input is needed.)

  By the Rabinowitsch reduction (proof of \Cref{thm:abstract-membership}) it suffices to fix
  $p,g\in K[X]$ with $Z_K(p)\subseteq Z_K(g)$ and produce $m$ with $g^m/p\in A_n(K)$;
  since $A_n(K)=\bigcap_v\mathcal O_v$ over the $K$-rational places $v$, this means one $m$ with
  $m\,v(g)\geq v(p)$ for \emph{all} $v$ at once. The hypothesis is the first-order sentence
  $\sigma\colon\ \neg\exists x\,(p(x)=0\wedge g(x)\neq 0)$, with parameters in $K$ (the
  coefficients of $p$ and $g$), and $K\models\sigma$.

  Suppose no such $m$ exists. Then for each $r\geq 1$ there is a $K$-rational place $v_r$ of $K(X)$
  with $r\,v_r(g)<v_r(p)$. Fix a non-principal ultrafilter $\mathcal U$ on $\NN$ and form the
  ultraproduct valued field $(F^\ast,v)=\prod_r(K(X),v_r)/\mathcal U$. Each $v_r$ has residue field
  exactly $K$ and is trivial on $K$, so the residue field of $v$ is the ultrapower $k:=K^{\mathcal U}$,
  into which $K$ embeds elementarily by \L{}o\'s' theorem ($K\preceq k$)---so $k$ is perfect and
  large by \emph{(a)}---and the diagonal copy of $k$ sits inside $\mathcal O_v$ with $v|_k$ trivial. Let $x_i$ be the class of $X_i$ and set
  $\alpha:=v(g(x))$, $\beta:=v(p(x))$ in the value group $\Gamma$; as $p,g\in K[X]\subseteq\mathcal
  O_{v_r}$, $\alpha,\beta\geq 0$. For each standard $N$, $\{r:N\,v_r(g)<v_r(p)\}\supseteq\{r\geq N\}$
  is cofinite, hence lies in $\mathcal U$; so by \L{}o\'s' theorem $\beta>N\alpha$ for every standard
  $N\geq 1$.

  Let $\Delta\leq\Gamma$ be the convex subgroup generated by $\alpha$; then $\beta\notin\Delta$.
  Coarsening $v$ by $\Delta$ gives a valuation $\bar v$ with residue field $L$ and
  $\bar v(g(x))=0$, $\bar v(p(x))>0$. Each $x_i$ has $\bar v(x_i)\geq 0$, so has a residue $b_i\in L$,
  and as the residue map is a homomorphism trivial on $K$, $p(b)=0$ and $g(b)\neq 0$ in $L$.
  The trivial copy of $k$ in $\mathcal O_v$ embeds into $L$, and $v$ refines $\bar v$, so $\bar v$
  induces a place $L\to k\cup\{\infty\}$ trivial on $k$ with residue field $k$: thus $L/k$ admits a
  $k$-rational place. The system $\{p=0,\ g\neq 0\}$ has coefficients in $k$ and the solution $b$ in
  $L$; by \emph{(b)} and largeness of $k$ it has a solution $c\in k^n$, so
  $k\models\exists x\,(p(x)=0\wedge g(x)\neq 0)$. But $\sigma$ has its parameters in $K$ and
  $K\models\sigma$, so $K\preceq k$ gives $k\models\sigma$---a contradiction. Hence
  some $m$ works and $g^m/p\in A_n(K)$.
\end{proof}

\thmCharacterization*

\begin{proof}
  \textup{(i)$\Rightarrow$(ii)} is \Cref{thm:large-implies-Dn}; \textup{(ii)$\Rightarrow$(iii)}
  is trivial; \textup{(iii)$\Rightarrow$(i)} is the contrapositive of \Cref{cor:nonlarge}
  (via \Cref{thm:AK-iff-domination}). Only \textup{(i)$\Rightarrow$(ii)} uses perfectness; the
  implication \textup{(iii)$\Rightarrow$(i)}, i.e.\ $A_2(K)\notin\W_K$ for non-large $K$, holds for
  every infinite field.
\end{proof}


\begin{remark}\label{rmk:instances}
Beyond the algebraically closed cased, since all henselian fields are large \cite[\S1]{pop-survey}, 
for each of them $A_n(K)\in\W_K$ gives a Nullstellensatz: for every ideal $I\subseteq K[X]$,
\[
  J(Z_K(I))=\rt[A_n(K)]{I},
\]
a single holomorphy-ring Nullstellensatz covering all of them at once. 
In the real and $p$-adic cases, it is worth being precise about what this specialization does and does not provide. It is \emph{not always} a reproof of 
the classical theorems; it is rather an \emph{alternative} description of the vanishing ideal $J(Z_K(I))$, cast in the intrinsic language of
the rational holomorphy ring $A_n(K)$. Whether this description carries the same information as the classical
one depends on the field.

Over $\Qp$ the two descriptions coincide. Jarden and Roquette had already identified the pertinent
holomorphy ring with the localized Kochen ring $\Lambda\cdot K[X]$ \cite[Thm.\ 2.1]{jardenroquette}; their
Riemann-space viewpoint, in which each $K$-rational place has a maximal centre with residue field exactly
$K$ is the direct inspiration for our notion of $\W_{K,n}$. There the $A_n(K)$-radical description is
the classical $p$-adic Nullstellensatz, and we add nothing new.

Over $\RR$, by contrast, the two descriptions are genuinely different presentations of the same ideal. The
$A_n(\RR)$-radical $\rt[A_n(\RR)]{I}$ and the real (sum-of-squares) radical
$\rt[\mathrm{real}]{I}=\{g:\exists\,m\geq1,\ \sigma\in\sum\RR[X]^2,\ g^{2m}+\sigma\in I\}$ of Krivine and
Stengle \cite{krivine,stengle} (see \cite[\S4.1]{prestel-delzell}) both equal $J(Z_\RR(I))$, hence equal one
another; but the two identifications seem not to be at the same level. One direction is formal: given a
certificate $g^{2m}+\sigma=\sum_jc_jf_j$ with $g\neq0$ (the case $g=0$ being trivial), setting
$\omega_j:=c_j\,g^{2m}/(g^{2m}+\sigma)$ gives $g^{2m}=\sum_j\omega_jf_j$, and each $\omega_j$ lies in
$A_n(\RR)$: every $\varphi\in\mathcal S^{\mathrm{rat}}_\RR$ has formally real residue field, 
so its valuation ring is convex with respect to some ordering of $\RR(X)$ (Baer--Krull), and
$0\leq g^{2m}\leq g^{2m}+\sigma$ in that ordering forces $v(g^{2m})\geq v(g^{2m}+\sigma)$---sums of
squares suffer no cancellation at real places. Hence $\rt[\mathrm{real}]{I}\subseteq\rt[A_n(\RR)]{I}$,
and the Krivine--Stengle theorem implies our theorem $A_n(\RR)\in\W_{\RR,n}$. However we do not know how to derive their theorem from ours. Our contribution over $\RR$ is thus an alternative characterization of the real radical through
largeness, not a new route to the the real Nullstellensatz. This raises the natural question: 
\begin{quote}
\emph{Is there a simple proof of the Krivine--Stengle real Nullstellensatz assuming $A_n(K)\in\W$?}
\end{quote}

Over $\ct$ the analogous explicit step is available: $A_n(\ct)$ admits a description through an analogue of
the Kochen operator---the $t$-Kochen operator $\gamma(z)=z/(1+tz^2)$, whose denominators $1+tz^2\equiv1\pmod
{(t)}$ are units at every $K$-rational place---yielding an identification $A_n(\ct)=\Lambda_L\cdot K[X]$ and a
$\ct$-Nullstellensatz. This has been carried out by the authors in \cite{cubides-uribe}.
\end{remark}


\section{Order structure of the class $\W$}\label{sec:structure}

In this section, we study the class $\W_{K,n}$ as a poset under inclusion. Whenever $n=1$ or $K$ is algebraically closed, the
poset collapses to a singleton: the only ring computing the Nullstellensatz is $E_n(K)$. In the remaining
cases, that is, for $n\geq2$ and $K$ not algebraically closed, the poset is quite wild. We show it has no
minimal elements (not even after restricting to integrally closed rings) and is not upward directed, hence has no maximum. On the positive side it is closed under unions of chains, so every member lies
below a maximal element. We begin with a proposition gathering some basic properties

\begin{proposition}\label{prop:basic-properties}
  Let $K$ be a field, $n\geq 1$ and $A\in\W_{K,n}$.
  \begin{enumerate}
    \item[(1)] $\W_{K,n}$ is convex, that is, if $A\subseteq C\subseteq B$ are subrings of $K(X)$ with
      $A,B\in\W_{K,n}$, then $C\in\W_{K,n}$.
    \item[(2)] For all $p,g\in K[X]$ with $p\neq 0$ and $Z_K(p)\subseteq Z_K(g)$, there is
      $m\geq 1$ with $g^m/p\in A$.
    \item[(3)] $\mathcal R_n(K)\subseteq A$.
    \item[(4)] For all $g\in K[X]$ with $Z_K(g)\neq\emptyset$, $1/g\notin A$. In particular,
      $K(X)\notin\W_{K,n}$.
  \end{enumerate}
\end{proposition}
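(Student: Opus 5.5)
The four items are elementary consequences of the definitions together with monotonicity of $A\mapsto\rt[A]{I}$. I will treat them in order, with (2) serving as input for (3) and (4).

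For (1), fix an ideal $I$. Monotonicity and $A\subseteq C\subseteq B$ give
\[
J(Z_K(I))=\rt[A]{I}\subseteq\rt[C]{I}\subseteq\rt[B]{I}=J(Z_K(I)),
\]
so all three coincide. Since $K[X]\subseteq A\subseteq C$, this shows $C\in\W_{K,n}$.

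For (2), I repeat the ($\Rightarrow$) argument of \Cref{thm:AK-iff-domination}, now for an arbitrary $A\in\W_{K,n}$. Take $I=(p)$. Then $g\in J(Z_K(p))=\rt[A]{(p)}$, so $g^m=\omega p$ for some $m\geq1$ and some $\omega\in A$. As $p\neq0$, this gives $g^m/p=\omega\in A$.

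For (3), let $f/q\in\mathcal R_n(K)$ with $Z_K(q)=\emptyset$. Apply (2) with $p=q$ and $g=1$: the hypothesis $Z_K(q)=\emptyset\subseteq Z_K(1)$ holds trivially, so $1/q=1^m/q\in A$. Since $f\in K[X]\subseteq A$, we get $f/q\in A$.

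For (4), suppose $1/g\in A$ and pick $a\in Z_K(g)$. Take $I=(g)$. Then $1=(1/g)\cdot g\in IA$, so $1\in\rt[A]{(g)}=J(Z_K(g))$. But the constant $1$ does not vanish at $a$, a contradiction. For the final claim, $n\geq1$ guarantees that $g=X_1$ has a $K$-zero while $1/X_1\in K(X)$, so $K(X)\notin\W_{K,n}$.

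There is no real obstacle here; the only point needing care is the hypothesis $p\neq0$ in (2), which is what lets us divide by $p$.
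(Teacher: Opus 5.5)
Your proposal is correct and matches the paper's proof item by item: the monotonicity sandwich for (1), taking $I=(p)$ for (2), specializing (2) to $g=1$ for (3), and $1=(1/g)\cdot g\in (g)A$ together with the test case $g=X_1$ for (4). The only point left implicit is that $q\neq 0$ in (3), and this follows from $Z_K(q)=\emptyset$.
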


\begin{proof}
  (1) Monotonicity of the $A$-radical gives $\rt[A]{I}\subseteq\rt[C]{I}\subseteq\rt[B]{I}$ for
  every ideal $I\subseteq K[X]$; since $\rt[A]{I}=J(Z_K(I))=\rt[B]{I}$, equality holds throughout.

  (2) Since $Z_K((p))=Z_K(p)\subseteq Z_K(g)$, we have $g\in J(Z_K((p)))=\rt[A]{(p)}$, so
  $g^m=\omega p$ for some $m\geq 1$ and $\omega\in A$, i.e.\ $g^m/p=\omega\in A$.

  (3) This is the special case $g=1$ of (2): if $p\in K[X]$ satisfies $Z_K(p)=\emptyset$, then
  $Z_K(p)\subseteq Z_K(1)$ holds trivially, so $1/p=1^m/p\in A$. Together with
  $K[X]\subseteq A$, this gives $\mathcal R_n(K)\subseteq A$.

  (4) If $1/g\in A$, then $1=(1/g)\,g$ exhibits $1\in\rt[A]{(g)}=J(Z_K((g)))$, which is
  impossible: $1$ does not vanish on the nonempty set $Z_K(g)$. For the last assertion, if
  $K(X)\in\W_{K,n}$, then applying the above to $g=X_1$ would give $1/X_1\notin K(X)$---a
  contradiction.
\end{proof}

\subsection{The collapsing cases}\label{subsec:rigid}

We start by an easy but useful observation.

\begin{lemma}\label{lem:numerator-vanishing}
  Let $A\in\W_{K,n}$ and suppose $f/g\in A$ with $f,g\in K[X]$ and
  $\gcd(f,g)=1$. Then $f\in J(Z_K(g))$.
\end{lemma}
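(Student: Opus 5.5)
The plan is to apply the definition of $\W_{K,n}$ to the principal ideal $I=(g)$. Set $\omega:=f/g\in A$. Then
\[
  f=\omega\cdot g\in (g)A .
\]
By \eqref{eq:Arad-intro} with exponent $m=1$, this gives $f\in\rt[A]{(g)}$.

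Since $A\in\W_{K,n}$, we have $\rt[A]{(g)}=J(Z_K((g)))=J(Z_K(g))$. Hence $f\in J(Z_K(g))$, which is the claim.

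There is essentially no obstacle. The hypothesis $\gcd(f,g)=1$ is not needed for the inclusion itself; it only matters for applications, where it ensures the conclusion concerns the reduced numerator. Likewise, the case $g$ constant is trivial, since then $Z_K(g)=\emptyset$ and $J(\emptyset)=K[X]$. In the write-up I would mention that only the inclusion $J(Z_K(I))\subseteq\rt[A]{I}$ is not used: what is used is the inclusion $\rt[A]{I}\subseteq J(Z_K(I))$ coming from $A\in\W_{K,n}$.
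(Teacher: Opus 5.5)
Your proposal is correct and is exactly the paper's argument: $f=(f/g)\,g\in(g)A$ gives $f\in\rt[A]{(g)}$ with exponent $m=1$, and $A\in\W_{K,n}$ identifies this radical with $J(Z_K((g)))$. Your side remarks also hold: the coprimality hypothesis plays no role in the proof, and only the inclusion $\rt[A]{(g)}\subseteq J(Z_K((g)))$ is used.
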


\begin{proof}
  From $f=(f/g) g\in (g)A$ we get $f\in\rt[A]{(g)}$, and $\rt[A]{(g)}=J(Z_K((g)))$ as
  $A\in\W_{K,n}$.
\end{proof}

\begin{proposition}\label{prop:rigid}
  Let $K$ be an infinite field and $n\geq1$. If $n=1$ or $K$ is algebraically closed, then
  \[
    \W_{K,n}=\{E_n(K)\}.
  \]
  Explicitly, $\W_{K,1}=\{\mathcal R_1(K)\}$, and $\W_{K,n}=\{K[X]\}$ for $K$ algebraically
  closed.
\end{proposition}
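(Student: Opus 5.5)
Since \Cref{thmA} already gives $E_n(K)\in\W_{K,n}$, it suffices to show that every $A\in\W_{K,n}$ satisfies $E_n(K)\subseteq A\subseteq E_n(K)$ in the two cases. The explicit descriptions then follow from \Cref{prop:S-in-Ev} ($E_1(K)=\mathcal R_1(K)$) and \Cref{thm:Ev-equals-poly} ($E_n(K)=K[X]$ for $K$ algebraically closed).

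\emph{Lower bound.} For $n=1$, \Cref{prop:basic-properties}(3) gives $\mathcal R_1(K)\subseteq A$, and $\mathcal R_1(K)=E_1(K)$. For $K$ algebraically closed, $E_n(K)=K[X]\subseteq A$ holds by definition of $\W_{K,n}$.

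\emph{Upper bound.} Let $f/g\in A$ be written in lowest terms. By \Cref{lem:numerator-vanishing}, $f$ vanishes on $Z_K(g)$.
\begin{itemize}
\item Case $n=1$: if $g$ had a root $a\in K$, then $f(a)=0$, so $X-a$ would divide both $f$ and $g$, contradicting $\gcd(f,g)=1$. Hence $g$ has no root in $K$, and $f/g\in\mathcal R_1(K)=E_1(K)$.
\item Case $K$ algebraically closed, $g$ nonconstant: we have $Z_K(g)\subseteq Z_K(f)$. Write $g_0=g/\gcd$ for the squarefree part of $g$. Hilbert's Nullstellensatz (\Cref{cor:hilbert}) gives $f\in\sqrt{(g)}$, so every irreducible factor of $g$ divides $f$. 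This contradicts coprimality. Alternatively, one can argue via dimension exactly as in the proof of \Cref{thm:Ev-equals-poly}: $Z_K(g)\not\subseteq Z_K(f)$. Hence $g$ is constant and $f/g\in K[X]=E_n(K)$.
\end{itemize}
In both cases $A\subseteq E_n(K)$, so $A=E_n(K)$.

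The only delicate point is avoiding circularity in the algebraically closed case. The Hilbert Nullstellensatz used above was itself derived from \Cref{thmA}, which does not depend on this proposition, so there is no circularity. Alternatively, the dimension argument from \Cref{thm:Ev-equals-poly} gives a point of $Z_K(g)\setminus Z_K(f)$ directly. Everything else is immediate from earlier results.
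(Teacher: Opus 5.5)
Your proof is correct and is essentially the paper's argument: the lower bound comes from \Cref{prop:basic-properties}(3) when $n=1$ and from the definition of $\W_{K,n}$ in the algebraically closed case. The upper bound comes from \Cref{lem:numerator-vanishing}, combined with root divisibility when $n=1$ and with Hilbert's Nullstellensatz (\Cref{cor:hilbert}, which does not depend on this proposition) when $K$ is algebraically closed. The stray sentence defining $g_0=g/\gcd$ is never used and should be deleted.
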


\begin{proof}
  In both cases $E_n(K)\in\W_{K,n}$ by \Cref{thmA}; recall that $E_1(K)=\mathcal R_1(K)$
  by \Cref{prop:S-in-Ev} and that $E_n(K)=K[X]$ for $K$ algebraically closed
  (\Cref{thm:Ev-equals-poly}). Conversely, let $A\in\W_{K,n}$ and $f/g\in A$ with
  $\gcd(f,g)=1$.

  Suppose first $n=1$. If $a\in Z_K(g)$, then $f(a)=0$ by \Cref{lem:numerator-vanishing}, so
  $X_1-a$ divides both $f$ and $g$, contradicting $\gcd(f,g)=1$. Hence $Z_K(g)=\emptyset$ and
  $f/g\in\mathcal R_1(K)$. Thus $A\subseteq\mathcal R_1(K)$, while
  $\mathcal R_1(K)\subseteq A$ by \Cref{prop:basic-properties}(3).

  Suppose now that $K$ is algebraically closed and that $g$ is non-constant. By
  \Cref{lem:numerator-vanishing} and Hilbert's Nullstellensatz,
  $f\in J(Z_K(g))=\sqrt{(g)}$, so any irreducible factor of $g$ divides $f^m$ for some $m$,
  hence divides $f$---contradicting $\gcd(f,g)=1$. Thus $g$ is constant and $f/g\in K[X]$;
  so $A\subseteq K[X]$, and $K[X]\subseteq A$ by definition.
\end{proof}

\subsection{There are no minimal elements}\label{subsec:trichotomy}

In view of \Cref{prop:rigid}, \emph{for the remainder of this section we assume that $K$ is an
infinite field which is not algebraically closed, and that $n\geq2$.}

\begin{definition}\label{def:Wint}
  $\W^{\mathrm{int}}_{K,n}:=\{A\in\W_{K,n}:A\text{ is integrally closed in }K(X)\}$.
\end{definition}

We start by a lemma---which might be folklore---that will be used several times.

\begin{lemma}\label{lem:witness-combination}
  Let $K$ be infinite and not algebraically closed, and let $I\subseteq K[X]$ be a nonzero ideal.
  Then there exists $\sigma\in I$, $\sigma\neq 0$, with $Z_K(\sigma)=Z_K(I)$.
\end{lemma}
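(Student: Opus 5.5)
The plan is to reuse the homogenization trick from the proof of \Cref{lem:witness}. Since $K[X]$ is Noetherian, write $I=(f_1,\dots,f_r)$. Discard the zero generators; since $I\neq 0$, at least one generator remains, so we may assume every $f_j\neq 0$. Because $K$ is not algebraically closed, fix a non-constant $q(Y)=a_0+a_1Y+\cdots+a_mY^m\in K[Y]$ with no root in $K$, so $a_0,a_m\neq0$. As in the proof of \Cref{lem:witness}, set
\[
  h_{g_1,g_2}=a_0g_2^m+a_1g_2^{m-1}g_1+\cdots+a_mg_1^m\in(g_1,g_2).
\]
The same case analysis shows $Z_K(h_{g_1,g_2})=Z_K(g_1)\cap Z_K(g_2)$.

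Next iterate exactly as in that proof: $h_1=h_{f_1,f_2}$ and $h_l=h_{h_{l-1},f_{l+1}}$. Put $\sigma:=h_{r-1}$, or $\sigma:=f_1$ if $r=1$. Then $\sigma\in I$, and by induction $Z_K(\sigma)=\bigcap_j Z_K(f_j)=Z_K(I)$.

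The only new point, and the step I expect to need care, is to check that $\sigma\neq 0$. Zero loci alone cannot detect this: when $Z_K(I)=K^n$ we would get no information. So I argue algebraically instead. Suppose $g_1,g_2\neq0$. Then $h_{g_1,g_2}=g_2^m\,q(g_1/g_2)$ in $K(X)$. Since $q$ has no root in $K$, it is enough that $q(g_1/g_2)\neq0$ in $K(X)$.

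If $g_1/g_2$ were a root of $q$ in $K(X)$, then it would be algebraic over $K$. But $K$ is algebraically closed in $K(X)$, since a rational function algebraic over $K$ must be constant. Hence $g_1/g_2\in K$, and then it would be a root of $q$ in $K$, which is impossible. Therefore $h_{g_1,g_2}\neq0$.

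Applying this at each step of the iteration, all the $h_l$ are nonzero, so $\sigma\neq 0$. This finishes the proof.
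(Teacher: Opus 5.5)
Your proposal is correct and follows the paper's proof essentially verbatim: the same iterated homogenization, with $\sigma\neq0$ obtained because $h_{g_1,g_2}\neq0$ whenever $g_2\neq0$, since $K$ is relatively algebraically closed in $K(X)$. As a side note, your remark that zero loci cannot detect $\sigma\neq0$ is too pessimistic: since $f_1\neq0$ and $K$ is infinite, $Z_K(\sigma)=Z_K(I)\subseteq Z_K(f_1)\neq K^n$, and this already forces $\sigma\neq0$ (the case $Z_K(I)=K^n$ you worried about cannot occur for $I\neq0$).
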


\begin{proof}
  Write $I=(f_1,\dots,f_r)$ with all $f_j\neq 0$ (discarding zero generators). Fix a non-constant
  $g(Y)=a_0+a_1Y+\cdots+a_mY^m\in K[Y]$ without roots in $K$ and recall the homogenization
  $h_{g_1,g_2}=a_0g_2^m+a_1g_2^{m-1}g_1+\cdots+a_mg_1^m\in(g_1,g_2)$ from the proof of the Witness
  \Cref{lem:witness}; the case analysis there shows
  $Z_K(h_{g_1,g_2})=Z_K(g_1)\cap Z_K(g_2)$ for \emph{arbitrary} $g_1,g_2\in K[X]$.
  Moreover $h_{g_1,g_2}\neq 0$ whenever $g_2\neq 0$: otherwise $g(g_1/g_2)=0$ in $K(X)$, so
  $g_1/g_2$ would be algebraic over $K$, hence in $K$ ($K$ is relatively algebraically closed in the purely
  transcendental extension $K(X)$), a root of $g$ in $K$, a contradiction. For $r=1$ take
  $\sigma=f_1$. Otherwise iterate as in \Cref{lem:witness}: $\sigma_1:=h_{f_1,f_2}$ and
  $\sigma_l:=h_{\sigma_{l-1},f_{l+1}}$; each step is nonzero (the second argument $f_{l+1}$ is
  nonzero), and the induction there gives
  $Z_K(\sigma_{r-1})=\bigcap_jZ_K(f_j)$. Take $\sigma:=\sigma_{r-1}\in I$.
\end{proof}

\begin{convention}\label{conv:5}
For the remainder of the section fix $K$ infinite and not algebraically closed, $n\geq 2$, an
element $\alpha$ algebraic over $K$ of degree $d\geq 2$ with minimal polynomial $\mu(Y)=\sum_{i=0}^dc_iY^i$ ($c_d=1$, and $c_0=\mu(0)\neq0$)
in $K[Y]$, and let
\begin{equation}\label{eq:def-q}
  q\;:=\;X_1^d\,\mu(X_2/X_1)\;\in\;K[X_1,X_2]\subseteq K[X]
\end{equation}
be its homogenization, as in \Cref{thm:S-equals-Ev-iff-acf}. Thus
$Z_K(q)=\{(0,0)\}\times K^{n-2}$ (if $q(a,b,\ast)=0$ with $a\neq 0$ then $\mu(b/a)=0$ with
$b/a\in K$, impossible as $\mu$ is irreducible of degree $\geq 2$; if $a=0$ then $b^d=0$, as $\mu$
is monic), and in particular $X_1\in J(Z_K((q)))$. Write $L:=K(\alpha)$ and let $k_0$ be the prime
field of $K$. 
\end{convention}

\begin{lemma}\label{lem:transc-series} We work over $k_0[[t]]$. 
  \begin{enumerate}
    \item[(i)] There is a family $\{e_j\}_{j\geq 2}\cup\{f_3,\dots,f_n\}\subseteq k_0[[t]]$,
      algebraically independent over $k_0(t)$, with $\operatorname{ord}_t(e_j)=j$ and
      $\operatorname{ord}_t(f_i)=2$.
    \item[(ii)] Any family in $k_0[[t]]$ that is algebraically independent over $k_0(t)$ remains
      algebraically independent over $k_1(t)$---inside $k((t))$---for every field extension $k_1\supseteq k_0$.
  \end{enumerate}
\end{lemma}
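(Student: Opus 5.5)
For (i), I would use a cardinality argument. The field $k_0(t)$ is countable, so its algebraic closure inside $k_0((t))$ is countable. Meanwhile $k_0[[t]]$ has transcendence degree $2^{\aleph_0}$ over $k_0(t)$: $k_0[[t]]$ has cardinality $2^{\aleph_0}$, and a field of countable transcendence degree over a countable field is countable.

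I would build the family by transfinite (here countable) recursion. Enumerate the required orders as $2,3,4,\dots$ for the $e_j$ and $2,\dots,2$ for the $f_3,\dots,f_n$; this is a countable list. At each stage a countable family $S$ has already been chosen, and the relative algebraic closure $M$ of $k_0(t)(S)$ in $k_0((t))$ is countable. The set of series of exact order $N$ is $t^N\cdot k_0[[t]]^\times$, which has cardinality $2^{\aleph_0}$, so it contains an element outside $M$. Choosing that element keeps the family algebraically independent. Algebraic independence of an infinite family means independence of every finite subfamily, and this follows because each new element is transcendental over the field generated by all the previous ones. When $k_0$ is finite the cardinality count is the same, since $|k_0[[t]]|=2^{\aleph_0}$ in every case. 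As a concrete alternative, lacunary series $t^N(1+\sum_k t^{2^{2^k}+\dots})$ would also work, but the counting argument is cleaner.

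For (ii), let $S\subseteq k_0[[t]]$ be independent over $k_0(t)$, and suppose it were algebraically dependent over $k_1(t)$. Then there is a nonzero polynomial $P$ with coefficients in $k_1[t]$ (after clearing denominators) and finitely many $s_1,\dots,s_r\in S$ with $P(s)=0$. Write $P=\sum_\lambda \lambda\, P_\lambda$, where $\lambda$ runs over a $k_0$-basis of the finitely generated $k_0$-span of the coefficients appearing in $k_1$ and $P_\lambda\in k_0[t][Y]$.

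The key step is that $k_1\otimes_{k_0}k_0[[t]]\to k_1[[t]]$ is injective, i.e.\ $k_0$-linearly independent elements of $k_1$ remain linearly independent over $k_0[[t]]$ inside $k_1[[t]]$. This holds by comparing coefficients of each power $t^N$: a relation $\sum_\lambda \lambda\, c_\lambda(t)=0$ gives $\sum_\lambda \lambda\, c_{\lambda,N}=0$ in $k_1$ for every $N$, hence all $c_{\lambda,N}=0$.

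Since each $P_\lambda(s)\in k_0[[t]]$, the relation $\sum_\lambda \lambda\, P_\lambda(s)=0$ forces $P_\lambda(s)=0$ for every $\lambda$. Independence over $k_0(t)$ then gives $P_\lambda=0$ for all $\lambda$, so $P=0$, a contradiction.

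The main obstacle is making the cardinality count in (i) rigorous uniformly over the characteristic, together with the reduction in (ii) from $k_1(t)$ to coefficients in $k_1[t]$. The latter is routine because $k_1(t)\subseteq k_1((t))$ and denominators can be cleared.
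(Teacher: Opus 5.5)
Your proposal is correct and follows the paper's proof. Part (i) is the same countable-versus-continuum counting argument, with the recursion written out via the countable relative algebraic closure of $k_0(t)(S)$ at each stage. Part (ii) rests on the same linear disjointness of $k_1$ and $k_0[[t]]$ over $k_0$: you check that disjointness by comparing $t^N$-coefficients and do the descent along a $k_0$-basis by hand, whereas the paper cites that linearly disjoint extensions are free and passes through the algebraic independence of $\{t,g_1,\dots,g_r\}$ over $k_0$.
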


\begin{proof}
  (i) Since the algebraic closure of $k_0(t)$ is countable, but $k_0((t))$ (and any closed ball with respect to the $t$-adic valuation) has cardinality $2^{\aleph_0}$, one can inductively pick elements which remain algebraically independent. 

  (ii) The fields $k_0((t))$ and $k_1$ are linearly disjoint over $k_0$ inside $k_1((t))$. Since linearly disjoint
  extensions are free \cite[Ch.~VIII, Prop.~3.2]{lang-algebra}, a family
  $\{t,g_1,\dots,g_r\}$ remains
  algebraically independent over $k_1$ since it is algebraically independent over $k_0$, as $g_1,\dots,g_r$ are
  algebraically independent over $k_0(t)$ and $t$ is transcendental over $k_0$. In particular, $g_1,\dots,g_r$ are algebraically
  independent over $k_1(t)$.
 \end{proof}

\begin{definition}\label{def:arcs}
  For $j\geq 2$ let $\iota_j\colon K[X]\to L[[t]]$ be the $K$-algebra homomorphism
  \[
    X_1\mapsto t,\qquad X_2\mapsto \alpha t+e_j(t),\qquad X_i\mapsto f_i(t)\quad(3\leq i\leq n).
  \]
\end{definition}

\begin{lemma}\label{lem:arc-properties}
  Let $j\geq 2$.
  \begin{enumerate}
    \item[(i)] The map $\iota_j$ is injective. It extends to an embedding $K(X)\into L((t))$, and
      $w_j:=\operatorname{ord}_t\circ\,\iota_j$ is a valuation of $K(X)$, trivial on $K$, with
      value group $\ZZ$ and $K[X]\subseteq\mathcal O_{w_j}$.
    \item[(ii)] For $h\in K[X]$: $w_j(h)>0$ if and only if $h(0)=0$. In particular, $\mathfrak m_{w_j}\cap K[X]=(X_1,\dots,X_n)$.
    \item[(iii)] $w_j(q)=m_\alpha\,j+(d-m_\alpha)\geq j+1$, where $m_\alpha\geq 1$ is the
      multiplicity of $\alpha$ as a root of $\mu$.
  \end{enumerate}
\end{lemma}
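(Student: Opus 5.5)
The plan is to prove the three items directly from the definition of $\iota_j$. The only non-formal input is the algebraic independence provided by \Cref{lem:transc-series}.

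\emph{(i).} I expect this to be the main point. Suppose $h\in K[X]$ satisfies $h(t,\alpha t+e_j,f_3,\dots,f_n)=0$ in $L[[t]]$.
\begin{enumerate}
\item[1.] Apply \Cref{lem:transc-series}(ii) with $k_1=L$. The family $\{e_j,f_3,\dots,f_n\}$ is algebraically independent over $k_0(t)$ by \Cref{lem:transc-series}(i), so it stays algebraically independent over $L(t)$.
\item[2.] Since $\alpha t\in L(t)$, the substitution $e_j\mapsto e_j+\alpha t$ is an $L(t)$-automorphism of the polynomial ring. Hence $\{\alpha t+e_j,f_3,\dots,f_n\}$ is also algebraically independent over $L(t)$.
\item[3.] As $t$ is transcendental over $L$, the full tuple $(t,\alpha t+e_j,f_3,\dots,f_n)$ is algebraically independent over $L$, hence over $K$. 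So $h=0$ and $\iota_j$ is injective.
\end{enumerate}
Because $L((t))$ is a field, $\iota_j$ then extends to an embedding $K(X)\into L((t))$. Restricting the discrete valuation $\operatorname{ord}_t$ gives $w_j$.
\begin{enumerate}
\item[4.] $w_j$ is trivial on $K$, since nonzero constants have order $0$.
\item[5.] Its value group is a subgroup of $\ZZ$ containing $w_j(X_1)=1$, so it equals $\ZZ$.
\item[6.] $\iota_j(K[X])\subseteq L[[t]]$, so $K[X]\subseteq\mathcal O_{w_j}$.
\end{enumerate}

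\emph{(ii).} Every image $\iota_j(X_i)$ has zero constant term, because $\operatorname{ord}_t(e_j)=j\geq2$ and $\operatorname{ord}_t(f_i)=2$. Hence the constant term of $\iota_j(h)$ is exactly $h(0)$. Therefore $w_j(h)>0$ if and only if $h(0)=0$. This gives $\mathfrak m_{w_j}\cap K[X]=(X_1,\dots,X_n)$.

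\emph{(iii).} I compute inside $L((t))$ using the homogeneous form of $q$:
\[
\iota_j(q)=t^d\,\mu\bigl(\alpha+e_j/t\bigr).
\]
Over $L$, factor $\mu(Y)=(Y-\alpha)^{m_\alpha}\nu(Y)$ with $\nu(\alpha)\neq0$. Then
\[
\mu(\alpha+e_j/t)=(e_j/t)^{m_\alpha}\,\nu(\alpha+e_j/t).
\]
The factor $e_j/t$ has order $j-1$. The factor $\nu(\alpha+e_j/t)$ lies in $L[[t]]$ with constant term $\nu(\alpha)\neq0$, so it has order $0$. Hence
\[
w_j(q)=d+m_\alpha(j-1)=m_\alpha j+(d-m_\alpha).
\]
For the inequality $w_j(q)\geq j+1$ there are two cases.
\begin{enumerate}
\item[1.] If $m_\alpha=1$, then $w_j(q)=j+d-1\geq j+1$ because $d\geq2$.
\item[2.] If $m_\alpha\geq2$, rewrite $w_j(q)-(j+1)=(m_\alpha-1)(j-1)+(d-m_\alpha)-1$. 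Here $(m_\alpha-1)(j-1)\geq1$ and $d\geq m_\alpha$, so the difference is $\geq0$.
\end{enumerate}
The case $m_\alpha\geq2$ can only occur when $L/K$ is inseparable.
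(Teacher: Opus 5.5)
Your argument is correct and essentially the paper's: (i) and (ii) match it, and in (iii) writing $\iota_j(q)=t^d\mu(\alpha+e_j/t)$ and factoring $\mu=(Y-\alpha)^{m_\alpha}\nu$ over $L$ is a tidier version of the paper's splitting of $q$ into linear forms over the splitting field of $\mu$. One arithmetic slip: for $m_\alpha\geq 2$ the difference $w_j(q)-(j+1)$ equals $(m_\alpha-1)j+(d-m_\alpha)-1$, not $(m_\alpha-1)(j-1)+(d-m_\alpha)-1$; your expression is too small by $m_\alpha-1$, but it is a lower bound, so $w_j(q)\geq j+1$ still follows.
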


\begin{proof}
  (i) For injectivity, suppose that there is $P\in K[X]\setminus\{0\}$ such that $\iota_j(P)=0$, that is, $P(t,\alpha t+e_j,f_3,\dots,f_n)=0$. Since $t$ is transcendental over $K$, $P(t,X_2,\dots,X_n)$ is nonzero over $K(t)$, and the substitution
  $X_2=\alpha t+Y_2$ (an automorphism of $L(t)[X_2,\dots,X_n]$) keeps
  $Q(Y_2,X_3,\dots,X_n):=P(t,\alpha t+Y_2,X_3,\dots,X_n)$ nonzero over $L(t)$. The vanishing
  $Q(e_j,f_3,\dots,f_n)=0$ shows that $\{e_j,f_3,\dots,f_n\}$ are algebraically dependent over
  $L(t)$, which contradicts they are algebraically independent over $k_0(t)$ by \Cref{lem:transc-series}. The last assertion is clear.

  (ii) Every generator $\iota_j(X_i)$ has $\operatorname{ord}_t\geq 1$, so the constant term of
  $\iota_j(h)$ is $h(0)$.

  (iii) Over the splitting field $F$ of $\mu$ one has $q=\prod_{i=1}^d(X_2-\alpha_iX_1)$ with
  $\alpha_1,\dots,\alpha_d$ the roots of $\mu$ listed with multiplicity. Extend $\iota_j$ to the
  $F$-algebra homomorphism $\iota_j\colon F[X]\to F[[t]]$ given by the same assignment of the
  variables; it trivially restricts to $\iota_j$ on $K[X]$ and $\operatorname{ord}_t$ on $F[[t]]$ restricts to $\operatorname{ord}_t$ on
  $L[[t]]$. Then
  $\iota_j(q)=\prod_{i}\bigl((\alpha-\alpha_i)t+e_j\bigr)$. Computing the valuation we have 
\begin{align*}
w_j(q)  & = w_j\big(\prod_{i}\bigl((\alpha-\alpha_i)t+e_j\bigr)\big) \\
        & = w_j\big(\prod_{\alpha=\alpha_i}\bigl((\alpha-\alpha_i)t+e_j\bigr)\big)+w_j\big(\prod_{\alpha\neq\alpha_i}\bigl((\alpha-\alpha_i)t+e_j\bigr)\big) \\
        & = m_\alpha w_j(e_j)+ (d-m_\alpha)  = m_\alpha j+ (d-m_\alpha)\geq j+1, 
\end{align*}  
since for $\alpha\neq \alpha_i$, we have $w_j\bigl((\alpha-\alpha_i)t+e_j\bigr)=1$ as $w_j(e_j)=j\geq 2$. For the last inequality: if $m_\alpha=1$, then $d-m_\alpha>0$ since $d\geq 2$ and $m_\alpha j+ (d-m_\alpha)\geq j+1$; if $m_\alpha\geq 2$, then $m_\alpha j+ (d-m_\alpha)\geq 2j>j+1$. 
\end{proof}

\begin{proposition}\label{prop:cutting}
  Let $K$ be infinite and not algebraically closed, $n\geq 1$, and $A\in\W_{K,n}$. Let $w$ be a
  rank-one valuation of $K(X)$ with $K[X]\subseteq\mathcal O_w$ such that
  $\mathfrak m_w\cap K[X]$ is the maximal ideal of a point $a\in K^n$, that is, if
  \[
    w(h)>0\iff h(a)=0\qquad(h\in K[X]).
  \]
  Then $A\cap\mathcal O_w\in\W_{K,n}$; and if $A$ is integrally closed in $K(X)$, so is
  $A\cap\mathcal O_w$.
\end{proposition}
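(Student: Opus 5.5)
The plan is to verify the two inclusions $\rt[A\cap\mathcal O_w]{I}\subseteq J(Z_K(I))$ and $J(Z_K(I))\subseteq\rt[A\cap\mathcal O_w]{I}$ directly. The first is free: by monotonicity of the $A$-radical, $\rt[A\cap\mathcal O_w]{I}\subseteq\rt[A]{I}=J(Z_K(I))$, since $A\in\W_{K,n}$. Also $K[X]\subseteq A\cap\mathcal O_w$ by hypothesis. So all the work is in the reverse inclusion.

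For the reverse inclusion, first treat $I=0$. Since $K$ is infinite, $J(Z_K(0))=J(K^n)=0$, so there is nothing to prove. For $I\neq 0$, \Cref{lem:witness-combination} gives $0\neq\sigma\in I$ with $Z_K(\sigma)=Z_K(I)$; this is where we use that $K$ is not algebraically closed. Let $g\in J(Z_K(I))$, so $Z_K(\sigma)\subseteq Z_K(g)$. Because $A\in\W_{K,n}$, \Cref{prop:basic-properties}(2) yields $m_0\geq 1$ with $g^{m_0}/\sigma\in A$. Then for every $m\geq m_0$,
\[
g^m/\sigma=(g^{m_0}/\sigma)\,g^{m-m_0}\in A .
\]
It therefore suffices to find one $m\geq m_0$ with $w(g^m/\sigma)\geq 0$. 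Then $g^m=(g^m/\sigma)\,\sigma\in I\,(A\cap\mathcal O_w)$, which gives $g\in\rt[A\cap\mathcal O_w]{I}$.

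To find such an $m$, split according to the point $a$, using the hypothesis $w(h)>0\iff h(a)=0$.
\begin{itemize}
\item If $\sigma(a)\neq 0$, then $w(\sigma)=0$. Since $w(g)\geq 0$ (as $g\in K[X]\subseteq\mathcal O_w$), any $m\geq m_0$ works.
\item If $\sigma(a)=0$, then $a\in Z_K(\sigma)\subseteq Z_K(g)$, so $w(g)>0$. Here we use that $w$ has rank one: its value group embeds in $\RR$ and is archimedean, so $m\,w(g)\geq w(\sigma)$ for all sufficiently large $m$.
\end{itemize}
This step is where the rank-one hypothesis is essential. For a higher-rank valuation, such as those in \Cref{thm:Q-fails-domination}, $w(g)$ could be infinitesimal relative to $w(\sigma)$ and no $m$ would exist. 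I expect this to be the only delicate point.

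Finally, for integral closedness: $\mathcal O_w$ is a valuation ring, hence integrally closed in $K(X)$. If $A$ is integrally closed in $K(X)$ as well, then any $z\in K(X)$ integral over $A\cap\mathcal O_w$ is integral over both $A$ and $\mathcal O_w$. Hence $z$ lies in both, so $A\cap\mathcal O_w$ is integrally closed.
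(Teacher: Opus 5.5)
Your proof is correct and follows essentially the same route as the paper. You use monotonicity for one inclusion, then \Cref{lem:witness-combination} to get a nonzero $\sigma\in I$ with $Z_K(\sigma)=Z_K(I)$, a relation $g^{m_0}/\sigma\in A$, and the archimedean property of the rank-one value group to make $w(g^m/\sigma)\geq 0$. The only difference is that you split cases on whether $\sigma(a)=0$ rather than on whether $g(a)=0$, which is a harmless and slightly cleaner variation.
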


\begin{proof}
  Write $A'=A\cap\mathcal O_w$; clearly $K[X]\subseteq A'$. If $A$ is integrally closed then so
  is $A'$, an intersection of two integrally closed subrings of $K(X)$. It remains to prove
  $\rt[A']{I}=J(Z_K(I))$ for every ideal $I\subseteq K[X]$. The inclusion $\rt[A']{I}\subseteq\rt[A]{I}=J(Z_K(I))$ follows directly since $A'
  \subseteq A$ and the operator is monotone. For the converse,  we distinguish two cases. If $I=0$, then $J(Z_K(0))=J(K^n)=0=\rt[A']{0}$, as $K$ is infinite. Otherwise, suppose that $I\neq 0$ and let $g\in J(Z_K(I))$. If $g=0$ the result is trivial, so assume $g\neq 0$. By
  \Cref{lem:witness-combination}, there is $\sigma\in I$ non-zero with $Z_K(\sigma)=Z_K(I)$. Since $g$ vanishes on
  $Z_K((\sigma))$ and $A\in\W$, we have that $J(Z_K((\sigma)))=\rt[A]{(\sigma)}$, therefore
  \[
    g^{m_0}=\alpha_0\,\sigma\qquad\text{for some $m_0\geq 1$ and some $\alpha_0\in A$}.
  \]
    We split in cases depending on the value of $g(a)$. If $g(a)\neq 0$, then $w(g)=0$. Since $g\in J(Z_K(I))$), $a\notin Z_K(I)=Z_K(\sigma)$, and thus $w(\sigma)=0$. In particular, $w(\alpha_0)=m_0\,w(g)-w(\sigma)=0$ and
  $\alpha_0\in A'$. Then $g^{m_0}=\alpha_0\sigma\in IA$, which shows $g\in\rt[A']{I}$. If
  $g(a)=0$, then $w(g)>0$; a rank-one value group is archimedean and $w(\sigma)<\infty$
  ($\sigma\neq 0$), so $k\,w(g)\geq w(\sigma)$ for some integer $k\geq 0$. Then $g^k\alpha_0\in A$ and 
   \[
   w(g^{k}\alpha_0)= w(g^k(g^{m_0}/\sigma) =  (k+m_0)w(g) - w(\sigma) \geq 0,
   \]
which shows that $g^k\alpha_0\in A'$. This shows that $g^{m_0+k}=g^k\alpha_0\sigma\in IA'$, which shows $g\in\rt[A']{I}$.
\end{proof}

\begin{theorem}\label{thm:no-minimal}
  Let $K$ be infinite and not algebraically closed, and let $n\geq 2$. For every $A\in\W_{K,n}$
  there exists $A'\in\W_{K,n}$ with $A'\subsetneq A$; if $A$ is integrally closed, so is $A'$.
  Consequently neither $\W_{K,n}$ nor $\W^{\mathrm{int}}_{K,n}$ has a minimal element---in
  particular, neither has a least element. 
\end{theorem}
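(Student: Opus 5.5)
The plan is to shrink $A$ by intersecting it with a single valuation ring $\mathcal O_{w_j}$, where $w_j$ is one of the arc valuations of \Cref{def:arcs}, and to pick $j$ so large that some element of $A$ is forced out. \Cref{prop:cutting} guarantees that the intersection stays in $\W_{K,n}$, and in $\W^{\mathrm{int}}_{K,n}$ when $A$ is integrally closed.

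First I produce a specific element of $A$ with a pole at the origin. By \Cref{conv:5}, $Z_K(q)=\{(0,0)\}\times K^{n-2}\subseteq Z_K(X_1)$ and $q\neq 0$. So \Cref{prop:basic-properties}(2), applied with $p=q$ and $g=X_1$, gives an integer $m\geq 1$, depending on $A$, with
\[
  \omega:=X_1^m/q\in A .
\]

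Next I fix $j:=\max(m,2)$ and let $w_j=\operatorname{ord}_t\circ\iota_j$. I check the hypotheses of \Cref{prop:cutting} using \Cref{lem:arc-properties}. By part (i), $w_j$ has value group $\ZZ$, so it is rank one, and $K[X]\subseteq\mathcal O_{w_j}$. By part (ii), $w_j(h)>0$ if and only if $h(0)=0$, so $w_j$ is centred at the point $a=0\in K^n$. Hence
\[
  A':=A\cap\mathcal O_{w_j}\in\W_{K,n},
\]
and $A'$ is integrally closed whenever $A$ is.

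For strictness I compute the value of $\omega$. Since $\iota_j(X_1)=t$, we have $w_j(X_1^m)=m$. By \Cref{lem:arc-properties}(iii), $w_j(q)\geq j+1\geq m+1$. Therefore
\[
  w_j(\omega)\leq m-(m+1)<0,
\]
so $\omega\notin\mathcal O_{w_j}$, hence $\omega\in A\setminus A'$ and $A'\subsetneq A$.

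The consequences are then formal. Any $A\in\W_{K,n}$ (respectively $\W^{\mathrm{int}}_{K,n}$) has a strictly smaller member of the same class, so neither class has a minimal element, and a fortiori neither has a least element.

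The only delicate point is the order of choices: $j$ must be chosen after $m$, because $m$ depends on $A$. This is exactly why the construction uses the whole family $\{e_j\}_{j\geq 2}$ from \Cref{lem:transc-series} rather than a single arc. The substantive work is already contained in \Cref{prop:cutting} and \Cref{lem:arc-properties}.
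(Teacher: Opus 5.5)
Your proof is correct and follows the paper's argument essentially verbatim. You obtain $X_1^m/q\in A$ from \Cref{prop:basic-properties}(2), intersect $A$ with the arc valuation ring $\mathcal O_{w_j}$ via \Cref{prop:cutting}, and use \Cref{lem:arc-properties}(iii) to show $X_1^m/q\notin\mathcal O_{w_j}$. The only difference is cosmetic: the paper takes $j=m+1$, while you take $j=\max(m,2)$, and both choices give $w_j(X_1^m/q)<0$.
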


\begin{proof}
  By the choice of $q$, $X_1\in J(Z_K((q)))$. Therefore, \Cref{prop:basic-properties}(2) (with
  $p=q$, $g=X_1$) yields $m\geq 1$ with $\xi:=X_1^m/q\in A$. Set $j:=m+1\geq 2$. By
  \Cref{lem:arc-properties}(iii),
  \[
    w_j(\xi)\;=\;m-w_j(q)\;\leq\;m-(j+1)\;<\;0,
  \]
  so $\xi\notin\mathcal O_{w_j}$. The valuation $w_j$ is of rank one and $\mathfrak{m}_{w_j}\cap K[X]=(X_1,\ldots, X_n)$
 by \Cref{lem:arc-properties}(i),(ii), so
  \Cref{prop:cutting} implies $A':=A\cap\mathcal O_{w_j}\in\W_{K,n}$, integrally closed
  whenever $A$ is; and $\xi\in A\setminus A'$ gives $A'\subsetneq A$.
\end{proof}

\begin{remark}\label{rmk:AK-not-minimal}
  Note that \Cref{thm:no-minimal} applies in particular to the distinguished members of
  $\W_{K,n}$: neither $E_n(K)$ nor, when $K$ is large and perfect, the rational holomorphy ring
  $A_n(K)\in\W^{\mathrm{int}}_{K,n}$ (\Cref{thmB}; $A_n(K)$ is integrally closed, being an
  intersection of valuation rings) is minimal. Applying \Cref{thm:no-minimal} to the ring $A_n(K)$
  produces integrally closed members of $\W_{K,n}$ strictly contained in the rational holomorphy
  ring. There is thus no
  smallest ring of denominators computing the closure operator $J\circ Z_K$, integrally closed or
  not.
\end{remark}

\subsection{There is no maximum and the existence of maximal elements}\label{subsec:maximal}

In this subsection, we describe the top of the poset $\W_{K,n}$, showing it is not upward directed and hence, having no maximum. We start on the positive side, showing that every element is contained in a maximal one. 



\begin{proposition}\label{prop:chains-up}
  If $(A_i)_{i\in\Lambda}$ is a chain in $\W_{K,n}$ (totally ordered by inclusion), then
  $A:=\bigcup_iA_i\in\W_{K,n}$.
\end{proposition}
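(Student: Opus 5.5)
The plan is to check the two defining conditions of $\W_{K,n}$ for $A:=\bigcup_iA_i$ directly. First, $A$ is a subring of $K(X)$ containing $K[X]$. Indeed, any two elements of $A$ lie in a common $A_i$, because the family is totally ordered. Hence sums and products stay in $A$, and $K[X]\subseteq A_i\subseteq A$ for any single index $i$. We take the chain to be nonempty, since otherwise the union is $\emptyset$ and the statement is vacuous or meaningless; this is implicit in the statement.

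For the radical condition, fix an ideal $I=(f_1,\dots,f_r)\subseteq K[X]$, which is finitely generated by Hilbert's basis theorem. The inclusion $J(Z_K(I))\subseteq\rt[A]{I}$ follows from monotonicity of $A\mapsto\rt[A]{I}$. Pick any $i_0$; then $J(Z_K(I))=\rt[A_{i_0}]{I}\subseteq\rt[A]{I}$.

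The reverse inclusion is the only point requiring the chain hypothesis, and I expect it to be the main (though mild) step. Let $g\in\rt[A]{I}$, so that $g^m=\sum_{j=1}^r\omega_jf_j$ for some $m\geq 1$ and $\omega_1,\dots,\omega_r\in A$. Each $\omega_j$ lies in some $A_{i_j}$. Since there are only finitely many $\omega_j$ and the family is a chain, all of them lie in the largest of $A_{i_1},\dots,A_{i_r}$, call it $A_{i^*}$. Then the same identity shows $g\in\rt[A_{i^*}]{I}=J(Z_K(I))$, using $A_{i^*}\in\W_{K,n}$. Hence $\rt[A]{I}=J(Z_K(I))$ for every ideal $I$, and $A\in\W_{K,n}$.

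In short, the $A$-radical is defined by finitary certificates, so it commutes with directed unions. Finite generation of $I$ is what keeps the number of coefficients $\omega_j$ finite. This is exactly what Zorn's lemma needs afterwards to produce maximal elements above any member of $\W_{K,n}$.
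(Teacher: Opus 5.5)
Your proof is correct and follows the paper's own argument: the finitely many coefficients $\omega_j$ in any certificate $g^m=\sum_j\omega_jf_j$ lie in a single member of the chain, so $\rt[A]{I}=\bigcup_i\rt[A_i]{I}=J(Z_K(I))$. Your remark that the chain must be nonempty, which is needed for $J(Z_K(I))\subseteq\rt[A]{I}$, makes explicit a point the paper leaves implicit.
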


\begin{proof}
  A union of a chain of subrings is a subring, and $K[X]\subseteq A\subseteq K(X)$. Fix
  $I=(f_1,\dots,f_r)$. Every relation $g^m=\sum_j\omega_jf_j$ with $\omega_j\in A$ uses finitely many
  $\omega_j$, which lie in a common $A_i$ (the family is a chain); hence
  $\rt[A]{I}=\bigcup_i\rt[A_i]{I}$. As $\rt[A_i]{I}=J(Z_K(I))$ for every $i$, also
  $\rt[A]{I}=J(Z_K(I))$.
\end{proof}

A direct application of Zorn's lemma yields the following corollary. 

\begin{corollary}\label{cor:maximals-exist}
  Every $A\in\W_{K,n}$ is contained in a maximal element of $\W_{K,n}$. \qedhere
\end{corollary}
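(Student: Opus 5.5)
The statement is the corollary that every $A\in\W_{K,n}$ lies below a maximal element of $\W_{K,n}$. I would prove it by Zorn's lemma applied to the poset
\[
  \mathcal P_A:=\{B\in\W_{K,n}: A\subseteq B\},
\]
ordered by inclusion. This set is nonempty because $A\in\mathcal P_A$. It is also a genuine set, since it consists of subrings of the fixed set $K(X)$.

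Next I verify that every nonempty chain $(B_i)_{i\in\Lambda}$ in $\mathcal P_A$ has an upper bound in $\mathcal P_A$. Put $B:=\bigcup_i B_i$. By \Cref{prop:chains-up}, $B\in\W_{K,n}$. Moreover $A\subseteq B_i\subseteq B$ for any $i$, so $B\in\mathcal P_A$, and $B$ bounds the chain from above. The empty chain is bounded by $A$ itself.

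Zorn's lemma then gives a maximal element $M$ of $\mathcal P_A$. It remains to check that $M$ is maximal in all of $\W_{K,n}$, not just in $\mathcal P_A$. Suppose $M\subseteq B$ with $B\in\W_{K,n}$. Then $A\subseteq M\subseteq B$, so $B\in\mathcal P_A$, and maximality of $M$ in $\mathcal P_A$ forces $B=M$.

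There is no real obstacle here. The only substantive input is the closure of $\W_{K,n}$ under unions of chains, which is exactly \Cref{prop:chains-up}. The one small care point is the passage from maximality in $\mathcal P_A$ to maximality in $\W_{K,n}$, handled in the previous paragraph.
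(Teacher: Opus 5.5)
Your proof is correct and matches the paper's argument: Zorn's lemma applied to the members of $\W_{K,n}$ containing $A$, with \Cref{prop:chains-up} supplying the upper bounds for chains. Your treatment of the empty chain and of the step from maximality in $\mathcal P_A$ to maximality in $\W_{K,n}$ simply spells out what the paper leaves implicit.
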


We now exhibit two members of $\W_{K,n}$ admitting no common upper bound. The construction exploits again the homogeneous polynomial $q$ defined in \eqref{eq:def-q}.

\begin{theorem}\label{thm:no-maximum}
  Let $K$ be infinite, not algebraically closed, and $n\geq2$. Then $\W_{K,n}$ is not upward directed:
  there are $C_1,C_2\in\W_{K,n}$ with no common upper bound in $\W_{K,n}$. Consequently $\W_{K,n}$ has
  \emph{no maximum}, and by \Cref{cor:maximals-exist} at least two distinct maximal elements.
\end{theorem}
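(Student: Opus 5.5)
The plan is to take for $C_1,C_2$ two holomorphy rings of iterated evaluation places which differ only in the orders $\sigma$ allowed at points of $Z_K(q)=\{(0,0)\}\times K^{n-2}$, where $q$ is the form of Convention~\ref{conv:5}. Put
\[
  C_1:=\bigcap\bigl\{\mathcal O_{P_{a,\sigma}}: a\notin Z_K(q),\ \sigma\in S_n\bigr\}\cap\bigcap\bigl\{\mathcal O_{P_{a,\sigma}}: a\in Z_K(q),\ \sigma(1)=1\bigr\},
\]
and define $C_2$ in the same way, replacing the condition $\sigma(1)=1$ by $\sigma(1)=2$. Each $C_i$ contains $E_n(K)$, hence contains $K[X]$.

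First I will check that $C_i\in\W_{K,n}$ using \Cref{thm:abstract-membership}. Condition (b) holds by construction, since every $a\in K^n$ has at least one admissible $\sigma_a$. For condition (a), the domination property $\mathrm{(D)}_n$, I will rerun the proof of \Cref{lem:domination}. That proof fixes an arbitrary single place $P_{a,\sigma}$ and shows $P_{a,\sigma}(g^m/p)\neq\infty$ for $m\geq\max(1,\deg p)$. The bound is therefore place-by-place, and it holds for every place in any subfamily of $\Ev_{K,n}$.

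Next come the distinguishing elements. By the case analysis in the proof of \Cref{thm:S-equals-Ev-iff-acf}, $\zeta_1:=X_1^{d-1}/q$ satisfies $P_{0,1}(\zeta_1)=0$, but $P_{0,2}(\zeta_1)=X_1^{-1}/c_0$, which has a pole at the next evaluation. Symmetrically, $\zeta_2:=X_2^{d-1}/q$ is finite when $X_2$ is evaluated first but infinite when $X_1$ is evaluated first. At points $a\notin Z_K(q)$ both elements are finite in every order. So $\zeta_i\in C_i$, while $\zeta_1\notin C_2$ and $\zeta_2\notin C_1$; in particular $C_1\neq C_2$.

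Now suppose $C\in\W_{K,n}$ contains both $C_1$ and $C_2$. The aim is to produce from $\zeta_1,\zeta_2$ and $K[X]$ an element of $C$ contradicting $C\in\W_{K,n}$. Two tests are available. The first is \Cref{prop:basic-properties}(4): find $1/h\in C$ with $Z_K(h)\neq\emptyset$. The second is \Cref{lem:numerator-vanishing}: find a reduced fraction $f/h\in C$ whose numerator does not vanish on $Z_K(h)$. I will first try the expansion
\[
  1=\sum_{i=0}^d c_i\,X_1^{d-i}X_2^i/q ,
\]
multiplying suitable terms by powers of $\zeta_1$ and $\zeta_2$. The goal is to isolate a monomial fraction with a single factor $X_1$ or $X_2$ in the denominator, for instance $1/X_1$ or $X_2^{k}/X_1$, whose numerator is nonzero on $\{X_1=0\}$.

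This last step is the main obstacle. The numerators of $\zeta_1$ and $\zeta_2$ both vanish at the origin, so a naive degree count may show that every element of $K[X][\zeta_1,\zeta_2]$ still satisfies the numerator-vanishing condition. If that happens, I will make the elements more asymmetric, for example taking $\zeta_1=X_1^{a}/q^{k}$ with $a<dk$ chosen so that suitable products force a pole along $X_1=0$. If that also fails, I will switch to arguing with the $A$-radical of a specific ideal such as $(q,X_1-X_2^{r})$: exhibit $g$ with $g\in\rt[C]{I}$ but $g\notin J(Z_K(I))$, using that $C$ contains both $\zeta_1$ and $\zeta_2$. Once non-directedness is established, the absence of a maximum is immediate. \Cref{cor:maximals-exist} then places $C_1$ and $C_2$ below maximal elements, and these must be distinct, since a common maximal element would be a common upper bound.
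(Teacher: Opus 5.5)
You have the easy half right. Your rings $C_1,C_2$ contain $E_n(K)$, so they inherit $\mathrm{(D)}_n$; there is no need to rerun \Cref{lem:domination}. Condition (b) of \Cref{thm:abstract-membership} holds by construction, so $C_1,C_2\in\W_{K,n}$, and your closing deduction (no maximum, two distinct maximal elements) is fine. But the content of the theorem is the step you leave open: that no $C\in\W_{K,n}$ contains $C_1\cup C_2$. You flag it as the main obstacle and list three strategies without carrying any of them out, so as it stands this is a genuine gap. Your diagnosis is also off. You aim for a fraction with denominator $X_1$, and you worry that $K[X][\zeta_1,\zeta_2]$ might satisfy the numerator-vanishing condition throughout. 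It does not: that ring already contains $1/q$.

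The missing idea is that $q$ is itself a combination of products of the numerators, so powers of $q$ in the denominator can be cancelled. Set $\omega_i:=X_1^{d-1-i}X_2^i/q$ for $0\le i\le d-1$.
\begin{itemize}
\item Your $C_1$ contains $\omega_0,\dots,\omega_{d-2}$: at points of $Z_K(q)$, evaluating $X_1$ first sends them to $0$, and off $Z_K(q)$ they are finite in every order. Likewise $C_2$ contains $\omega_1,\dots,\omega_{d-1}$.
\item Every fraction $X_1^aX_2^b/q^d$ with $a+b=d(d-1)$ is a product of $d$ of the $\omega_i$. Hence $h/q^d\in K[\omega_0,\dots,\omega_{d-1}]$ for every binary form $h$ of degree $d(d-1)$.
\item Taking $h=q^{d-1}$ gives $1/q\in C$. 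This contradicts \Cref{prop:basic-properties}(4), because $Z_K(q)\neq\emptyset$.
\end{itemize}
This is exactly the paper's argument. The paper simply takes $C_1=E_n(K)[\omega_0,\dots,\omega_{d-2}]$ and $C_2=E_n(K)[\omega_1,\dots,\omega_{d-1}]$, which are contained in your rings.

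Even your two elements $\zeta_1=\omega_0$ and $\zeta_2=\omega_{d-1}$ would suffice. For $d=2$ one has $1/q=c_0\zeta_1^2+c_1\zeta_1\zeta_2+\zeta_2^2$. In general write $1/q=q^{k-1}/q^k$. For $k\ge 3d-4$, every monomial of degree $d(k-1)$ is divisible by some $X_1^{(d-1)a}X_2^{(d-1)b}$ with $a+b=k$. This makes $q^{k-1}/q^k$ a $K[X]$-combination of the products $\zeta_1^a\zeta_2^b$.
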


\begin{proof} We follow the notation of \eqref{conv:5} with $\mu$ irreducible
  of degree $d\geq2$ and $q$ given by \eqref{eq:def-q}. Recall that
  $Z_K(q)=\{X_1=X_2=0\}$, and note $q(0,X_2)=X_2^d$ and $q(X_1,0)=c_0X_1^d$. For
  $0\leq i\leq d-1$ set
  \[
    \omega_i:=\frac{X_1^{d-1-i}X_2^i}{q}\in K(X),\qquad
    C_1:=E_n(K)[\omega_0,\dots,\omega_{d-2}],\quad
    C_2:=E_n(K)[\omega_1,\dots,\omega_{d-1}].
  \]
  Each fraction $\omega_i$ is reduced since a common factor of $X_1^{d-1-i}X_2^i$ and $q$ would be a
  monomial, but $q$ does not vanishes identically on $X_1$ nor $X_2$. We now look at $A:=K[\omega_0,\ldots, \omega_{d-1}]$. Every element $h\in A$ is a $K$-linear combination of monomials of the form $\prod_{j=1}^m \omega_{i_j}$. For any such monomial we have that 
  \[
  \prod_{j=1}^m \omega_{i_j}= \frac{X_1^aX_2^b}{q^m}, \text{ with $b=i_1+\cdots+i_m$ and $a=m(d-1)-b$.}
  \]
    In fact, any element of form $\frac{X_1^aX_2^b}{q^d}$ with $a+b=d(d-1)$ is such a product (this is possible since
  $0\leq b\leq d(d-1)$). This shows that for every 
  $0\leq b\leq d(d-1)$). This shows that for every homogeneous polynomial $h$ in $X_1, X_2$ of degree $d(d-1)$, we have that $h/q^{d}\in A$, since $h$ is a $K$-linear combination of products of the form $X_1^aX_2^b$ with $a-b=d(d-1)$. Applying this to $h=q^{d-1}$ we have that 
  \[
    \frac1q\;=\;\frac{q^{d-1}}{q^d}\;\in\;A= K[\omega_0,\dots,\omega_{d-1}].
  \]
  Since $Z_K(q)\neq\emptyset$, \Cref{prop:basic-properties}(4) implies that no member of
  $\W_{K,n}$ contains all the $\omega_i$. This yields in turn that, if $C_1,C_2\in \W_{K,n}$, they cannot have a common refinement as it would contain $A$. So it remains to show that $C_1,C_2\in \W_{K,n}$.  

  Not that both $C_1$ and $C_2$ satisfy $\mathrm{(D)}_n$, since $E_n(K)$
  does (\Cref{cor:Ev-has-Dn}) and $\mathrm{(D)}_n$ passes to any overring. Therefore, by \Cref{thm:abstract-membership}, we only need to verify that for every $a\in K^n$, there is a permutation $\sigma_a$ such that $C_1\subseteq \mathcal{O}_{P_{a,\sigma_a}}$ (resp. $C_2\subseteq \mathcal{O}_{P_{a,\sigma_a}}$). Let $a=(a_1,\ldots, a_n)\in K^n$. As each $\omega_i$ depends only on $X_1,X_2$,
  only the evaluations of these two variables matter. We split in cases depending on the values of $a_1,a_2$. If $(a_1,a_2)\neq(0,0)$ then
  $q(a_1,a_2)\neq 0$ by the choice of $q$. In this case we take $\sigma_a=\mathrm{id}$ since the denominator is never identically zero, so $P_{a,\sigma_a}$ is finite on every $\omega_i$.
  Now suppose that $(a_1,a_2)=(0,0)$. For $C_1$ let $\sigma_a$ be again the identify so that we evaluate $X_1$ first: the numerators of
  $\omega_0,\dots,\omega_{d-2}$ all contain a factor of $X_1$ and the denominamator becomes $X_2^d$, so
  $\omega_i\mapsto 0/X_2^d=0$. For $C_2$ let $\sigma_a$ swap $a_1, a_2$ so that we evaluate $X_2$ first: the numerators of
  $\omega_1,\dots,\omega_{d-1}$ all contain a factor od $X_2$ and the denominator becomes $c_0X_1^d$, so $\omega_i\mapsto 0/(c_0X_1^d)=0$. In
  each case we get a finite element. This shows that $C_1,C_2\in \W_{K,n}$. 

The last assertion follows by (\Cref{cor:maximals-exist}). 
\end{proof}

We have now all the tools to prove the dichotomy.

\thmTrichotomy*

\begin{proof}
 Point (1) is \Cref{prop:rigid}. For (2), the absence of minimal elements---even among
  integrally closed subrings---is \Cref{thm:no-minimal}, and the absence of a maximum is
  \Cref{thm:no-maximum}.
\end{proof}

\bibliographystyle{amsplain}
\bibliography{biblio}

\end{document}